\newtheorem{lemma}{Lemma}[section]
\newtheorem{corollary}[lemma]{Corollary}
\newtheorem{theorem}[lemma]{Theorem}
\newtheorem{proposition}[lemma]{Proposition}
\theoremstyle{definition}
\newtheorem{definition}{Definition}[section]
\newtheorem{remarks}[definition]{Remarks}
\newtheorem{remark}[definition]{Remark}
\newcommand{\Q}{\mathbb{Q}}
\newcommand{\R}{\mathbb{R}}
\newcommand{\N}{\mathbb{N}}
\newcommand{\Hh}{\mathbb{H}}
\DeclareMathOperator{\End}{End}
\DeclareMathOperator{\ann}{ann}
\def\a{\alpha}
\def\b{\beta}
\def\f{\phi}
\def\p{\pi}
\def\l{\lambda}
\def\g{\gamma} 
\def\d{\delta}
\def\m{\mu}
\def\n{\nu}
\def\t{\theta}
\def\ta{\tau}
\begin{document}
\title{ Leavitt path algebras and their representations}
\author[P. N. \'Anh]{Pham Ngoc \'Anh\textsuperscript{1\dag*}}
\keywords{free groups, free algebras, Fox derivatives, irreducibile polynomials, atomic factorizations, Leavitt algebras, localizations; word\\ \protect \indent * Corresponding author: P. N. \'Anh}
\thanks{This research was partially supported by Hungarian National Research Development and Innovation Office NKHIH K138828 and  K132951, by both VIASM and  Vietnamese Institute of Mathematics.}
\subjclass[2020]{Primary  16G20, 16S88; secondary 16P50, 16SS90} 

\maketitle
\newcommand{\err}[1]{{\color{red}#1}}
\begin{center}
{\small 

\textsuperscript{1}HUN-REN Alfr\'ed R\'enyi Institute of Mathematics,\\ Re\'altanoda u. 13-15, Budapest, H-1053 Hungary\\
\textsuperscript{\dag}\url{anh@renyi.hu}

}
\end{center}

{\centering\footnotesize \it{Dedicated to paternal Professor Rich\'ard Wiegandt on the occasion of 
his $90^{th}$ birthday} \par}
\begin{abstract}
Viewing Leavitt path algebras of finite digraphs as rings of quotients defined by the ideal topology of the ideal generated by all arrows and sinks allows us to induce their representations from those of the quiver algebras and therefore provides a way to construct representations of Leavitt path algebras of not necessarily finite digraphs together with a computation of the endomorphism rings. This approach emphasizes the decisive role of infinite emitters, i.e., vertices with infinitely many outgoing arrows, in the representation theory of Leavitt path algebras. In particular, extensions of representations of ordinary quivers by taking tensor products are no longer simple extensions when infinite emitters exist, hence they become the targets of further study. Our results connect Leavitt path algebras to other vigorously active working areas in ring theory like localizations, quiver algebras, free associative algebras as well as noncommutative noetherian domains. Moreover, as Cuntz algebras ${\mathcal O}_n$
for operator graph algebras, our treatment emphasizes the central role of classical Leavitt algebras $L_K(1, n)$ in the study of Leavitt path algebras.
\end{abstract}
\maketitle
\section{Introduction}
\label{int} Although Leavitt algebras were already invented in the early 50's of the last century, the theory of Leavitt path algebras was substantially bolstered by the subsequent development of Cuntz algebras ${\mathcal O}_n$, its later twin. The aim of this work is a continuation of the investigation of simple modules over Leavitt path algebras initiated in \cite{an}. In contrast to \cite{an}, where ideas from $C^*$-algebras are used to treat certain irreducible representations, called {\em special simple} representatons, in this work we
use purely algebraic techniques to study their representation theory. These methods are  appropriate not only in ring theory but hopefully also in operator algebras. In any case,
our results suggest connections between Leavitt path algebras and different areas such as division algebras, noncommutative noetherian domains, quiver algebras and localizations. Hence one can not only borrow results, methods and ideas from operator algebras but also from the above mentioned branches to the study of Leavitt path algebras. Finitely presented modules over are also investigated in \cite{ara1} and \cite{ara2} by Ara and Brustenga by using localization technique, however, without relating to rings of quotients forming by Gabriel topology.

In Section \ref{c1} inspired by the Hilbert Nullstellensatz together with the fact that Leavitt path algebras of finite digraphs are perfect localizations of the ordinary quiver algebras, we use division algebra factors of quiver algebras, to construct new irreducible representations of Leavitt path algebras with pre-described endomorphism rings. Therefore one can apply a module theory of both quiver algebras and their factor algebras to Leavitt path algebras. In particular, there is a nice connection between Leavitt (path) algebras, division algebras and free associative (quiver) algebras. Consequently, with a few exceptions, a classification of irreducible representations reduces to the rather difficult setting of free associative algebras, so a mostly unsatisfying mission. This observation clearly underlines the important task of finding particular interesting well-behaved simple modules with nice invariants. In the last section we remove the finiteness of digraphs to obtain further
classes of simple modules, pointing out some applications to classical Leavitt algebras which arise from the well-known fact that free associative algebras of finite rank contain free associative subalgebras of infinite ranks. This shows again an obvious but useful role of division algebras in the study of Leavitt (path) algebras. In contrast to the basic idea of \cite{an} which originated in the analysis of characterizing continuous function algebras via their spectra, in this paper we investigate Leavitt path algebras by using the algebraic fact that they are rings of quotients of quiver algebras in Utumi's sense. In this way, one obtains both a new way and a further explanation for irreducible representations defined by Chen words. This effort leads to one main goal of this paper - defining representation spaces of Leavitt path algebras, or equivalently, to finding  "good" $K$-bases, i.e., Schreier (or Gr\"obner?) bases of their representations. This is important because in contrast to Leavitt path algebras where paths $\a \b^*$ do not form a $K$-basis by the appearance of Cuntz-Krieger condition (CK2), one can provide, for particular representations, their representation spaces with well-behaved actions making for a more transparent and better understanding.  In any case, the combined study of well understood subalgebras, like quiver algebras, ultramatricial subalgebras or subalgebras generated by commuting (symmetric) idempotents associated to ordinary paths, may lead to effective ways to handle modules over Leavitt path algebras.

\section{Preliminaries: rings of quotients, digraphs and their algebras}

\emph{A word about terminology}. All fields are commutative. All algebras are vector spaces together with an associative but not necessarily unital ring structure such that ring multiplication is commutable with scalars from a field. It is worth emphasizing that a base field is not required to be embedded in algebras over it. All algebras have local units and all modules over them are unitary, i.e., for every set of finitely many elements there exists an idempotent  which acts as an identity on this set.
\emph{Ideals} and \emph{modules} are considered with respect to algebras, that is,
they are also at the same time vector spaces over a field. 
\emph{Finitely presented} modules are factors of finitely generated modules by finitely generated submodules.  For an algebra $A$,
$A(1-x),\, (1-x)A, \, (1-x)A(1-x)$  are, by definition in order, sets $\{r-rx \,|\, r\in A\},\, \{r-xr\, | \, r\in A\}, \{r-xr-rx+xrx\,|\, r\in A\}$, respectively, for $x\in A$. For further undefined notions for rings, localizations or for Leavitt path algebras we refer to monographs \cite{s1} and \cite{aas}, respectively. 

The notion of \emph{rings of quotients} is introduced independently by Findlay and Lambek \cite{fl} and Utumi \cite{u1} in the late fifties, last century. Utumi's work is definitely important for use in Leavitt path algebras by permitting rings without identity. Namely, a ring $Q$ (not necessarily with identity) is a \emph{ring of right (left) quotients} of a subring $A$ and in this case $A$ is called \emph{dense on the right (left)} in $Q$ if for any two elements $q_1, q_2 \in Q, q_1\neq 0$, there is $r\in A$ such that $q_1r\, (rq_1)\neq 0,\, q_2r\, (rq_2)\in A$. It is trivial that the left (right) annihilator of a dense subring on the right (left) is trivial, i.e., $0$, and dense right (left)  ideals are essential. The alternative, more general approach of localizations for rings of quotients in which original rings are only homomorphically mapped, by using hereditary torsion theory
was developed in the sixties by Gabriel, Lambek, Morita, etc. 

Particular flat ring extensions are crucial in our investigation. A ring $B$ is called
a \emph{flat epimorphic right (left) ring of quotients} of a ring $A$ if it  is a flat left (right) $A$-module via a ring epimorphism $\f:A\rightarrow B$ 
i.e., two ring homomorphisms from $B$ are equal if they coincide on $\f(A)$. In this work, we are interested only in the case where $A$ can be considered as a subring of  $B$, i.e., they are bimorphic rings and so one can realize $B$ as a flat ring of quotients of $A$ with respect to the Gabriel topology $\frak T$ defined by right (left) ideals $R\, (L)$ of $A$ satisfying
$RB=B\, (LB=B)$. Left ideals $L$ of $A$ satisfying $L=BL\cap A$ are called $\frak T$-\emph{saturated}.  We shall need the following important result which is an obvious consequence of \cite{s1} Corollary XI.3.8.
\begin{corollary}\label{rightid} Let $B$ be a flat bimorphic left ring of quotients of a subring $A$. Then every left ideal $L$ of $B$ satisfies $L=B(B\cap L)$. In particular, left ideals $L$ of $A$ with $BL=B$ are finitely generated and form a Gabriel topology of $A$ to which $B$ is a flat ring of quotients of $A$.
\end{corollary}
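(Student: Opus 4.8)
The plan is to derive everything from the cited result, Stenström's Corollary XI.3.8, which characterizes flat epimorphic ring extensions via their Gabriel topology. I would first recall what that corollary gives: when $B$ is a flat epimorphic (bimorphic) extension of $A$, the lattice of left ideals of $B$ is in order-preserving bijection with the $\frak T$-saturated left ideals of $A$, where $\frak T$ is the Gabriel topology of left ideals $L$ with $BL=B$. The key consequence I want is that for any left ideal $\mathfrak{L}$ of $B$, one has $\mathfrak{L}=B(A\cap\mathfrak{L})$, i.e.\ every left ideal of $B$ is generated by its contraction to $A$. This is precisely the first assertion, and I expect it to follow almost verbatim from the saturation correspondence: set $L=A\cap\mathfrak{L}$, note $BL\subseteq\mathfrak{L}$ trivially, and use flatness together with epimorphicity to force the reverse inclusion $\mathfrak{L}\subseteq BL$.

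Next I would turn to the \emph{finite generation} claim for left ideals $L$ of $A$ satisfying $BL=B$. The natural route is: since $BL=B$ and $B$ has local units (or $1\in B$ in the unital case), the unit $1$ (or the relevant local unit $e$) lies in $BL$, so there is a \emph{finite} expression $e=\sum_{j=1}^{t} b_j\ell_j$ with $b_j\in B$ and $\ell_j\in L$. The heart of the argument is then to show that the finitely many $\ell_j$ already generate $L$ as a left ideal of $A$ after $\frak T$-saturation — equivalently, that the finitely generated left ideal $L_0=A\ell_1+\cdots+A\ell_t$ satisfies $BL_0=B$ and $L_0$ is $\frak T$-dense in $L$, forcing $L=L_0$ by saturation. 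I would feed $BL_0=B$ (immediate from $e\in BL_0$) back through the correspondence of the first part to conclude $L=A\cap B=A\cap BL_0=L_0$, using that both $L$ and $L_0$ are $\frak T$-saturated and map to the same left ideal $B$ of $B$.

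Finally, to confirm that these ideals \emph{form a Gabriel topology} and that $B$ is the associated flat ring of quotients, I would verify the Gabriel topology axioms for the family $\frak T=\{L\subseteq A \text{ left ideal}\mid BL=B\}$: closure under enlargement is clear since $L\subseteq L'$ gives $B=BL\subseteq BL'$; closure under finite intersection uses flatness (the map $B\otimes_A(L\cap L')\to B$ and the pullback square); and the crucial transitivity/localization axiom (T4) is exactly what flat epimorphicity guarantees. That the ring of quotients relative to $\frak T$ recovers $B$ is then standard once the topology is identified, since $B=\inlim_{L\in\frak T}\Hom_A(L,B)$ collapses by epimorphicity. The bulk of this paragraph is routine axiom-checking, so I would state it compactly and lean on Stenström.

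I expect the main obstacle to be the \textbf{finite generation} step, specifically pinning down rigorously that the finite relation $e=\sum_{j=1}^{t}b_j\ell_j$ actually \emph{generates} the saturated ideal $L$ rather than merely a dense subideal. The subtlety is that $BL_0=B$ only tells us $L_0$ is $\frak T$-dense, and passing from density to equality requires the saturation property of $L$ from the first part of the corollary; handling the non-unital case with local units (replacing $1$ by an idempotent $e$ acting as identity on the relevant elements) adds a layer of bookkeeping that must be done carefully to keep all products well-defined.
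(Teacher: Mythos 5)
The paper does not actually prove this corollary: it is stated as an immediate consequence of Corollary XI.3.8 of Stenstr\"om \cite{s1}, with no further argument given. So your plan of unwinding Stenstr\"om is the same route the paper takes, and your first step (every left ideal $\mathfrak{L}$ of $B$ equals $B(A\cap\mathfrak{L})$, via the extension--contraction correspondence for the perfect topology) as well as your verification of the Gabriel-topology axioms are exactly what that citation provides.

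The finite-generation step, however, contains a genuine error, located precisely at the point you flagged as the main obstacle. Since $A\subseteq B$ and $BL=B=BL_0$, the $\mathfrak{T}$-saturation of \emph{both} $L$ and $L_0$ is $A\cap BL=A\cap B=A$; no proper left ideal belonging to $\mathfrak{T}$ is saturated, so the premise ``both $L$ and $L_0$ are $\mathfrak{T}$-saturated'' forces $L=L_0=A$, and your chain $L=A\cap B=A\cap BL_0=L_0$ literally reads $L=A=A=L_0$. The correspondence from your first part cannot separate ideals inside $\mathfrak{T}$, because they all extend to the same left ideal $B$. What your relation $e=\sum_{j=1}^t b_j\ell_j$ genuinely yields is that $L$ \emph{contains} a finitely generated left ideal $L_0\in\mathfrak{T}$, i.e.\ that $\mathfrak{T}$ has a \emph{basis} of finitely generated left ideals; this is Stenstr\"om's actual statement for perfect (flat epimorphic) localizations, and it is the only reading under which the second assertion of the corollary holds. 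Taken literally, ``every $L$ with $BL=B$ is finitely generated'' is false in this generality: for $A=K[x_1,x_2,\dots]$ and $B$ its field of fractions, $B$ is a flat bimorphic (left and right) ring of quotients of $A$, and the ideal $L=(x_1,x_2,\dots)$ satisfies $BL=B$ (as $x_1$ is invertible in $B$) yet is not finitely generated. So no refinement of the saturation argument can close this gap; the claim must be read as the basis property, which the part of your argument up to $BL_0=B$ already establishes.
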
 
Since terminology in graph theory is notoriously nonstandard, we  fix
necessary notations and concepts to avoid confusion. 
We then remind the reader of the definitions of both "quiver" and "Leavitt path algebra", and make some germane observations about these algebras which will be useful in the sequel.  

A \emph{directed graph}, shortly \emph{digraph} is a quadruple $E=(E^0, E^1, s, r)$ of a non-empty \emph{vertex} set $E^0$, an  \emph{arrow} set $E^1$, and 
\emph{source} and \emph{range functions} $s, r:E^1\rightarrow E^0$. A vertex
$v\in E^0$ is \emph{singular} if it is either a \emph{sink} or \emph{infinite emitter} 
 according to $|s^{-1}(v)|= 0 $ or $|s^{-1}(v)| =\infty$, respectively. 
$v$ is a \emph{regular vertex} if it is not singular. A vertex $v$  is a \emph{source} if $|r^{-1}(v)|=0$.
A \emph{path} $\a$ of \emph{length} $n=|\a|>0$ from a {\it source} $s(\a)=v_0$ (to a {\it range} $r(\a)=v_n$ if $n< \infty$) is a sequence $\a=\overset{v_0}{\bullet} \overset{a_1}{\longrightarrow} \overset{v_1}{\bullet}\longrightarrow \cdots \overset{v_{l-1}}{\bullet} \overset{a_l}{\longrightarrow} \overset{v_l}{\bullet}{\longrightarrow}\cdots (l\leq n$  if  $n\in \N$, or $l< \infty$ if $n=\infty)$,  written as a word $\a=a_1\cdots a_i\cdots$, of 
arrows $a_i\in E^1$ satisfying $s(a_{i+1})=v_i=r(a_i)\, (i< n)$. For  each $l=0, 1, 2, \dots$ a subpath $a_1\cdots a_l\, (v_0=s(a_1)$ in case $l=0$) is called the \emph{head} $h_{\a}(l)$ or the \emph{initial segment of length} $l$ of $\a$ while the  remainder $a_{l+1}\cdots a_{l+i}\cdots=b_1b_2\cdots$ with $b_i=a_{l+1}, i\in \N$ is called the \emph{tail} $t_{\a}(l)$ {\it of colength} $l$
of $\a$.
Every vertex $v\in E^0$ is, by convention, a path of length $0$ from $v$ to $v$. Paths like $h_{t_{\a}(i)}(j)$ are called \emph{subpaths} of $\a$. $\a$ is called a \emph{finite path}, or from now on, simply a \emph{path} if its length is finite. The set of all finite paths (of length $n \geq 0$) is denoted by $F(E) \, (F_n(E)$, respectively). A path $\a$ is a \emph{closed path based at} $v$ if $\a \in F(E)$ and $v=s(\a)=r(\a)$, and so closed paths uniquely determine  their bases. 

Infinite paths are also called \emph{Chen words} whose set is denoted by $E^{\infty}$. Most simple examples of Chen words are \emph{rational Chen words} $\d^{\infty}$ where $\d$ are closed paths of positive length,  i.e.,  infinite paths $\d \d \cdots \d\cdots$. A \emph{period} $\p$ is a closed path of the smallest length defining the rational Chen word $\d^{\infty}$. Two Chen words $\a$ and $\b$ are called \emph{tail-equivalent}, denoted as 
$\a\sim \b$,  if there are integers $n, \, m\geq 0$ with $t_{\a}(n)=t_{\b}(m)$. The relation $\sim$ is trivially an equivalence relation;  the equivalence class of $\a \in E^{\infty}$ is denoted by $[\a]$. A Chen word $\a$ is called \emph{irrational} if it has no rational tails.
These definitions point out a striking similarity to the real numbers. While rational Chen words are of finite character, determined uniquely by their periods, irrational Chen words cannot be characterized by any of their finite heads.
A class $[\a]\, (\a\in E^{\infty})$ is called {\it rational} (resp., {\it irrational})  if it contains a rational (resp., irrational) Chen word. Chen classes are important because they provide natural, well-behaved bases for certain irreducible representations of Leavitt path algebbras [see \cite{an}].

A useful device in applications of digraphs is the composition of paths. Namely, if $\a=a_1\cdots a_m$ is a finite path and $\b=b_1\cdots $ is a not necessarily finite path of length $n, m\geq 0$, respectively, satisfying $r(\a)=s(\b)$, then the \emph{composition} or the \emph{product} $\a\b$ is well-defined by concatenation
$$\a\b=a_1\cdots a_mb_1\cdots b_i\cdots$$
The \emph{addition law of length} $|\a\b|=|\a|+|\b|$ holds obviously when $\a\b$ is well-defined.

A \emph{partial ordering} $\leq$ on $F(E)$ is defined by
$\a\leq \b$ if $\a$ is an initial segment of $\b$, i.e., $\b=\a \g$ for an appropriate $\g \in F(E)$.

Reversing arrows gives the \emph{dual digraph} $E^*$ of $E$, i.e., $E^*$ has the same vertex set $E^0$ and the set $\{a^* | a \in E^1\}$ of arrows with $s(a^*)=r(a),\, r(a^*)=s(a)$. Arrows, paths of positive length in $E$ or $E^*$ are called \emph{real} or \emph{ghost}, respectively. Paths in $E^*$ are exactly $\a^*=a_n^*\cdots a_1^*$ for 
$\a=a_1\cdots a_n\in F(E)$. 
Therefore the set of finite paths (of length $n$) in $E^*$ is denoted by $F^*(E)\, (F^*_n(E))$ not by $F(E^*)\, (F_n(E^*)$, respectively),  as is expected by convention, emphasizing the fact that $^*$ is an anti-isomorphism between $E$ and $E^*$.  

\begin{definition}
\label{q1} The \emph{path algebra} or the \emph{quiver algebra} $KG$ of a digraph $G$ over a field $K$ is the $K$-vector space $KG$ with base $F(G)$ such that a product 
$\m\n\, (\m, \n \in F(G))$ is the composition of paths when $r(\m)=s(\n)$, or $0\in KG$ when $r(\m)\neq s(\n)$.
\end{definition}

\begin{definition}
\label{q2} Let $\hat E$ be the extension of a digraph $E=(E^0, E^1, s, r: E^1\rightarrow E^0)$ by adding ghost arrows $\overset{r(a)}{\bullet} \overset{a^*}{\longrightarrow} \overset{s(a)}{\bullet}$ for all $a\in E^1$. The \emph{Leavitt path algebra} $L_K(E)$ of a digraph $E$ over a field $K$ is the factor of $K\hat E$ by the 
so-called \emph{Cuntz-Krieger relations}
\begin{enumerate}
\item \,\,\,\,  (CK1)\,\,\,\, for any two arrows $a, \, b\in E^1$\,\,\,\,\,\,\,\,\,\,\,\,\,\,\,\,\,\,\,  
 $a^*b=\begin{cases} r(a)& \text{if}\qquad a=b,\\ 0& \text{if}\qquad a\neq b\end{cases},$ 

\item \,\,\,\, (CK2)\,\,\,\, for every regular vertex $v\in E^0_r$ \,\,\,\,\,\,\,\,\,\,\,\,\,\,\,\,\,\,\,\,\,\,\,\,\,\,\,\,\,\,\, $v=\sum\limits_{a\in s^{-1}(v)}aa^*$. 
\end{enumerate}
The \emph{Cohn path algebra} $C_K(E)$ of $E$ is the factor of $K\hat E$ by (CK1).
\end{definition}
Although $KE$ and $KE^*$ are clearly subalgebras of both $C_K(E)$ and $L_K(E)$, the set  $\{\a\b^*\,|\, r(\a)=r(\b);\, \a,\, \b\in F(E)\}$ is a $K$-basis of $C_K(E)$ but only a generating set of the $K$-space $L_K(E)$ by the Cuntz-Krieger condition (CK2). This fact suggests a crucial problem of finding good $K$-bases for certain (irreducible) representations of $L_K(E)$ as was done in the case of special irreducible representations, see for example \cite{an}. Moreover, there is, fortunately, no confusion when 
$\sum k_i\a_i\b^*_i\in K\hat E$ are used simply for elements of both $C_K(E)$ and $L_K(E)$. By Cuntz-Krieger condition (CK1) the assignment 
$\a \in F(E)\mapsto \a^*\in F^*(E)$ induces a standard, canonical involution in both $L_K(E)$ and $C_K(E)$ respectively. The following basic result, Theorem 4.1 \cite{ajm} can be verified directly and nicely connects the module theory of Leavitt path algebras $L_K(E)$ to that of $KE$ as we will see in this work. For the sake of convenience, from now on $I$ will always denote the ideal of $KE^*\, (KE)$ generated by all ghost (real) arrows and sinks of $E$, respectively.  Moreover, the 
$I$-topology is denoted by $\frak T$. If $E$ is a finite digraph, then $\frak T$ is a  perfect right Gabriel topology of $KE$ or
 a left perfect Gabriel topology of $KE^*$, respectively. However, $\frak T$ is no longer a Gabriel topology by the absence of the notorious condition (T4) when $E$ has a finite vertex set with infinite emitter(s)!
\begin{theorem}[Theorem 4.1\cite{ajm}]\label{loc} Let $E$ be a digraph and $K$ a field. If $E$ is finite, then $L_K(E)$ is both a perfect left and right localization of $KE^*$ and $KE$ with respect to the Gabriel topology $\frak T$ defined by the two-sided ideal $I$ generated by sinks and ghost (real) arrows of $E$, respectively. If $E$ is infinite, then $L_K(E)$ is only a ring of both left and right quotients of $KE^*$ and $KE$, respectively.
\end{theorem}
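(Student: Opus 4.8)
The plan is to verify the two Utumi density conditions directly from the Cuntz–Krieger relations and then, in the finite case, to promote density to perfectness by invoking Corollary \ref{rightid}. Throughout I use that $KE$ and $KE^*$ sit inside $L_K(E)$ as subalgebras and that $L_K(E)$ is spanned by the monomials $\a\b^*$ with $r(\a)=r(\b)$. The computational heart is the behaviour of (CK1) under right multiplication: for real paths $\b,\g$ one gets $\b^*\g=\g'$ when $\b\le\g$ (with $\g=\b\g'$), $\b^*\g=(\b')^{*}$ when $\g\le\b$ (with $\b=\g\b'$), and $\b^*\g=0$ when $\b,\g$ are incomparable. Hence right multiplication of a monomial $\a\b^*$ by a real path $\g$ with $|\g|\ge|\b|$ lands in $KE\cup\{0\}$, the ghost part being either annihilated or completely absorbed; dually, left multiplication by a long enough ghost path sends monomials into $KE^*$.

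To prove that $KE$ is dense on the right in $L_K(E)$, take $q_1\neq 0$ and $q_2$ in $L_K(E)$ and let $n$ exceed the ghost-lengths of all monomials occurring in $q_1,q_2$. Iterating (CK2) at regular vertices yields a resolution $\sum_\g\g\g^*$ of the local unit on the relevant vertices, where $\g$ runs over the finitely many real paths that are either of length $n$ or terminate at a sink of smaller length. Since a sink cannot be properly extended, for every such $\g$ and every occurring $\b$ one has $|\g|\ge|\b|$ or $\g$ incomparable to $\b$, so $q_2\g\in KE$ by the previous paragraph; and because $q_1=q_1\sum_\g\g\g^*\neq 0$, some $q_1\g\neq 0$. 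Taking $r=\g$ for this single path gives $q_1r\neq 0$ and $q_2r\in KE$. The dual argument, using left multiplication by ghost paths, shows $KE^*$ is dense on the left. As each element has finite support and only finitely many ghost tails ever need clearing, this establishes the ring-of-quotients assertions for arbitrary $E$.

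For finite $E$ I would then upgrade to a perfect localization. The right ideal $I$ of $KE$ generated by the real arrows and sinks satisfies $I\cdot L_K(E)=L_K(E)$, since every regular vertex $v=\sum_{a\in s^{-1}(v)}aa^*$ lies in $I\cdot L_K(E)$ and every sink lies in $I$, so the local units do. By Corollary \ref{rightid} it then remains to check that $L_K(E)$ is a flat $KE$-module; flatness, together with the density just proved, identifies the inclusion as a flat epimorphism and, by the finiteness of $E$, guarantees that the members of the $I$-topology $\frak T$ are finitely generated. Thus $\frak T$ is a perfect Gabriel topology with $L_K(E)=(KE)_\frak T$, and the left-handed statement for $KE^*$ follows by applying the involution $*$.

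The main obstacle is precisely the finiteness hypothesis, which isolates the role of infinite emitters. When $v$ is an infinite emitter, (CK2) is \emph{not} imposed at $v$, so $v\neq\sum_{a\in s^{-1}(v)}aa^*$ and no finitely generated subideal of $I$ resolves $v$; the resolution $\sum_\g\g\g^*$ ceases to be a finite sum equal to a local unit, and $\frak T$ loses the finitely generated basis (equivalently, axiom (T4)) demanded of a Gabriel topology. Element-wise density nonetheless survives — each $q_i$ touches only finitely many arrows out of $v$, and $v$ itself is cleared by right multiplication by any single outgoing arrow — which is exactly why only the weaker Utumi conclusion is available for infinite $E$. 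I expect the one genuinely nontrivial step in the finite case to be the flatness of $L_K(E)$ over $KE$ (as opposed to the epimorphism), presumably obtained by exhibiting $L_K(E)$ as a directed colimit of finitely generated projective $KE$-modules built from the resolutions $\sum_\g\g\g^*$.
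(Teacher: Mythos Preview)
The paper does not prove this theorem in the text; it is quoted from \cite{ajm} (their Theorem~4.1) with only the remark that it ``can be verified directly'', so there is no in-paper argument to compare against line by line.

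Your density argument is sound whenever the (CK2)-resolution $\sum_\gamma\gamma\gamma^*$ is available, and the reduction ``right-multiply by a long enough real path, or one ending at a sink'' is the correct mechanism. The infinite-emitter case, though, is only a gesture: the identity $q_1=q_1\sum_\gamma\gamma\gamma^*$ is precisely what fails there, so you cannot conclude $q_1\gamma\neq 0$ from it. What is actually needed is the lemma that if $qv\neq 0$ with $v$ not a sink then $qa\neq 0$ for some $a\in s^{-1}(v)$. At an infinite emitter one writes $qv=\sum k_i\alpha_i\beta_i^*$, picks $a$ outside the finitely many initial letters of the $\beta_i$ to see that the ``$\beta_i=v$'' part must already vanish in $KE$, and then observes $qv=\sum_{a\in S}(qa)a^*$ over the remaining finite set $S$ of initial letters, forcing $qv=0$. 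Your sentence about $q_i$ touching only finitely many arrows is the right intuition for exactly this step, but it is not yet the argument.

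For the finite case there is a more serious gap. You invoke Corollary~\ref{rightid} and then say it ``remains to check flatness'', but that corollary's hypothesis is that $A\to B$ be a flat \emph{bimorphic} extension, i.e.\ already a ring epimorphism, and Utumi density alone does not give epimorphy. So you are missing both flatness (which you acknowledge) and the epimorphism property, and neither follows from the other plus density. The route indicated by the paragraph preceding the theorem runs the other way: for finite $E$ the ideal $I$ is finitely generated, one verifies directly that the $I$-adic filter $\frak T$ satisfies the Gabriel axioms (including the troublesome (T4), which is exactly what fails once infinite emitters appear) and is perfect, and then identifies $L_K(E)$ with the $\frak T$-localization. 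Flatness and epimorphy are then consequences of perfectness via the general theory in \cite{s1}, Chapter~XI, rather than hypotheses to be checked separately. Your proposed colimit description of $L_K(E)$ over $KE$ is a plausible alternative path to flatness, but it would still leave the epimorphism to be established.
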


\section{Irreducible representations of finite digraphs}
\label{c1}
In this section $E$ always denotes a finite digraph with the dual $E^*$; $K$ is a field and $I$ is an ideal of $KE^*$ generated by all sinks and ghost arrows of $E$. $\frak T$ denotes the Gabriel topology of $KE^*$ defined by powers $I^l \,(l\in \N)$.
Moreover, by definition, $L_K(1,n)\, (n\geq 1)$ is the classical Leavitt algebra over $K$ generated by $x_i, y_i(=x_i^*)$ subject to $\sum_{i=1}^{n} x_iy_i=1$ and $y_jx_i=\begin{cases} 1& \text{if $j=i$}\\ 0& \text{if $j\neq i$}\end{cases}$, that is, $L_K(1, n)$ is a Leavitt path algebra of one vertex with $n$ loops $x_i$, where $y_i$ are ghost loops associated to $x_i$. $K$ as the canonical factor of $K\langle y_1, \dots, y_n\rangle$ by sending all $y_i$ to $0$, is called \emph{trivial factor, module...} of $K\langle y_1, \dots, y_n\rangle$ when it is appropriate. We shall keep this notation throughout this work. First, we have immediately the following description of their simple modules by using Theorem \ref{loc}.
\begin{theorem}\label{simple1} Let $E$ be a finite digraph with the dual $E^*$ and $K$ a field.
If $\frak T$ is the ideal topology of $KE^*$ defined by the ideal generated by all ghost arrows and sinks in $E$, then there is a one-to-one correspondence between simple left $L_K(E)$-modules and left ideals
$L$ of $KE^*$ maximal in the set of left ideals of $KE^*$ which are not open. More generally, there is a bijection between cyclic left $L_K(E)$-modules and left $\frak T$-saturated ideals $L$ of $KE^*$, i.e.,  $L=L_K(E)L\cap KE^*$. In particular, a generator set $\{r_i\}\subseteq KE^*$ of $_{KE^*}L$ forms a relation set for $_{L_K(E)}M=L_K(E)/L_K(E)L\cong L_K(E)\otimes_{KE^*} (KE^*/L)$,
that is, $M$ is generated by a generator $m\in M$ subject to $r_im=0$ for each $r_i$.
\end{theorem}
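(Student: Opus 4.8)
The plan is to derive everything formally from Theorem~\ref{loc} and its lattice-theoretic companion Corollary~\ref{rightid}. Throughout set $A=KE^*$ and $B=L_K(E)$. Since $E$ is finite, Theorem~\ref{loc} says that $B$ is a perfect left localization of $A$ at $\frak T$; in particular $B$ is a flat bimorphic left ring of quotients of $A$, so Corollary~\ref{rightid} applies and every left ideal $L'$ of $B$ obeys $L'=B(L'\cap A)$. I also record the characterisation of $\frak T$ furnished by the same corollary: a left ideal $L$ of $A$ is open (i.e.\ lies in $\frak T$) exactly when $BL=B$.

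First I would establish an inclusion-preserving bijection between left ideals of $B$ and $\frak T$-saturated left ideals of $A$, given by $L'\mapsto L'\cap A$ and $L\mapsto BL$. For a left ideal $L'$ of $B$, put $L=L'\cap A$; Corollary~\ref{rightid} gives $BL=B(L'\cap A)=L'$, whence $BL\cap A=L'\cap A=L$, so $L$ is saturated. Conversely a saturated $L$ satisfies $BL\cap A=L$ by definition, so the two maps are mutually inverse. Since cyclic left $B$-modules are precisely the quotients $B/L'$ by left ideals $L'$ of $B$, and $B/L'=B/BL$ for $L'=BL$, this bijection is exactly the claimed correspondence between cyclic left $B$-modules and $\frak T$-saturated left ideals of $A$, proving the second assertion.

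Next I would specialise to simple modules. As the bijection preserves inclusions, it sends maximal left ideals of $B$ to the maximal members of the poset of proper $\frak T$-saturated left ideals, the associated simple module being $B/L'=B/BL$. It remains to match ``maximal saturated'' with ``maximal non-open'', and here the useful device is the saturation operator $L\mapsto \overline L:=BL\cap A$. One checks that a proper saturated ideal is automatically non-open (if $BL=B$ then $L=BL\cap A=A$), and that $\overline L$ is proper whenever $L$ is non-open (otherwise $A\subseteq BL$ forces $BL=B$), while always $\overline L$ is non-open since $B\overline L\subseteq BL\neq B$. Hence if $L$ is maximal among non-open ideals, then $L\subseteq\overline L$ with $\overline L$ non-open, so $L=\overline L$ is saturated and, since every proper saturated ideal above it would be non-open, it is maximal saturated; conversely, if $L$ is maximal saturated and $L\subsetneq L_1$ with $L_1$ non-open, then $\overline{L_1}$ is proper saturated and strictly larger than $L$, a contradiction. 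This identifies simple left $B$-modules with left ideals of $A$ maximal among the non-open ones, giving the first assertion.

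Finally, for the presentation statement I would apply the right-exact functor $B\otimes_A-$ (indeed exact, by flatness of $B$ as a right $A$-module) to $L\hookrightarrow A\to A/L\to 0$. The image of $B\otimes_A L\to B$ is $BL$, so $M:=B/BL\cong B\otimes_A(A/L)$. A left-$A$-generating set $\{r_i\}$ of $L$ presents $A/L$ by the generator $1+L$ subject to the relations $r_i(1+L)=0$; since $B\otimes_A-$ is right exact it transports this presentation, so $M$ is generated by $m=1\otimes(1+L)$ subject only to $r_i m=0$. The sole genuine point in the whole argument is the maximality translation of the third paragraph---verifying that maximal $\frak T$-saturated and maximal non-open left ideals coincide---everything else being the formal calculus of perfect localizations delivered by Corollary~\ref{rightid}.
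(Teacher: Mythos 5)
Your proof is correct and follows essentially the same route as the paper, which likewise derives the theorem from Theorem~\ref{loc} and Corollary~\ref{rightid} via the correspondence between (maximal) left ideals of $L_K(E)$ and $\frak T$-saturated (respectively, maximal non-open) left ideals of $KE^*$. The only difference is one of detail: the paper declares the whole thing an "obvious consequence," while you spell out the inclusion-preserving bijection, the identification of maximal saturated with maximal non-open ideals via the saturation operator, and the transport of the presentation under $L_K(E)\otimes_{KE^*}-$, all of which check out.
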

\begin{remark} One can formulate Theorem \ref{simple1} in terms of $E$ if we use right $L_K(E)$-modules. However, experts in
Leavitt path algebras traditionally work with left modules. In view of this result the classification of simple modules of Leavitt path algebras is, in general, rather hard. For example, in the case of classical Leavitt algebra $L_K(1, n)\, (n>1)$ the classification problem reduces partially to one of all simple modules of free associative algebras of rank $n>1$, which is a rather difficult task. Furthermore, it is important even for digraphs for which the classification is not solvable, to find particular irreducible representations with good descriptions. In fact, the aim of this work is to provide new, well-behaved irreducible representations of not necessarily finite digraphs, and to connect Leavitt path algebras with other classical rings like Weyl algebras, simple domains and enveloping algebras of finite dimensional Lie algebras and even more generally to non-commutative noetherian domains, etc.
\end{remark}
\begin{proof} Cyclic (simple) left modules of $L_K(E)$ correspond bijectively to (maximal) left ideals of $L_K(E)$
and there is a one-to-one correspondence between (maximal) left ideals of $L_K(E)$. Therefore  Theorem \ref{simple1} is an obvious consequence of Corollary \ref{rightid} and Theorem \ref{loc}.
\end{proof}
We do not here discuss in its full generality the isomorphism problem of cyclic modules and the even more interesting and harder problem of determining their annihilator ideals. However, we shall describe isomorphism classes of certain simple modules. As a first invariant we compute  endomorphism rings of certain irreducible representations of Leavitt path algebras.
\begin{corollary}\label{endoring} Let $M=L_K(E)/L_K(E)L$ be an arbitrary cyclic left $L_K(E)$-module  where $L$ is a $\frak T$-saturated left ideal of $KE^*$. Then $\End(_{KE^*}KE^*/L)$ naturally becomes a subring of $\End(_{L_K(E)}M)$. Moreover, if
$L$ is a  maximal left ideal of $KE^*$ not open in $\frak T$, then $\End(_{KE^*}KE^*/L)$ is canonically isomorphic to the endomorphism ring of $_{L_K(E)}M$.
\end{corollary}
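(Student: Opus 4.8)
The plan is to exploit the flat bimorphic localization $KE^*\hookrightarrow L_K(E)$ of Theorem \ref{loc} together with the presentation $M\cong L_K(E)\otimes_{KE^*}(KE^*/L)$ recorded in Theorem \ref{simple1}. Write $A=KE^*$, $B=L_K(E)$ and $N=A/L$, so that flatness of $B$ as a right $A$-module gives $M\cong B\otimes_A N\cong B/BL$, while $\frak T$-saturation $L=BL\cap A$ makes the unit $\eta\colon N\to M,\ \bar a\mapsto 1\otimes\bar a$, injective. First I would produce the natural ring homomorphism $\Theta\colon\End(_AN)\to\End(_BM)$ by applying the functor $B\otimes_A-$, that is $\Theta(\f)=\mathrm{id}_B\otimes\f$; functoriality makes $\Theta$ unital and multiplicative. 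Injectivity of $\Theta$ is immediate from $\eta$: since $(\mathrm{id}_B\otimes\f)\circ\eta=\eta\circ\f$, a nonzero $\f$ cannot be sent to $0$. This already establishes the first assertion for every $\frak T$-saturated $L$, with no maximality needed.

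For the second assertion I would first record the $\ann$-description of endomorphism rings of cyclic modules: for a left ideal $J$ of a ring $R$ one has $\End(_RR/J)\cong\{\xi\in R/J\mid J\xi=0\}=\ann_{R/J}(J)$ via $\psi\mapsto\psi(\bar 1)$. Applying this to $J=L$ over $A$ and to $J=BL$ over $B$, and using that $L$ generates $BL$ as a left ideal (so that $BL\xi=0\iff L\xi=0$), the map $\Theta$ is identified with the inclusion $\ann_N(L)\hookrightarrow\ann_M(L)$ induced by $\eta$. Hence surjectivity of $\Theta$, which is the content of the second assertion, is exactly the inclusion
\[
\ann_M(L)=\{\xi\in M\mid L\xi=0\}\subseteq N .
\]
Since $M$ is a simple $B$-module by Theorem \ref{simple1} (this is where maximality of $L$ among non-open left ideals enters), $\End(_BM)$ is already a division ring, so everything comes down to pinning down this annihilator.

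To attack $\ann_M(L)\subseteq N$ I would use that $M$, being a $B$-module, is $\frak T$-torsion-free, whereas maximality of $L$ forces $N=A/L$ to be $\frak T$-cocritical: every proper quotient $A/L'$ with $L\subsetneq L'$ has $L'$ open and is therefore $\frak T$-torsion. Given $\xi\in M$ with $L\xi=0$ and $\xi\neq 0$, the cyclic submodule $A\xi\cong A/\ann_A(\xi)$ lies in the torsion-free module $M$; if $\ann_A(\xi)\supsetneq L$ it would be open, making $A\xi$ torsion and hence $\xi=0$, a contradiction. Thus $\ann_A(\xi)=L$, so $A\xi$ is a second embedded copy of $N$ in $M$, and the problem reduces to showing $A\xi\subseteq N$, equivalently that the two $A$-submodules $N$ and $A\xi$ of $M$ coincide.

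The main obstacle is precisely this last coincidence. Purely abstract flat-epimorphism data do not suffice for it: for the classical perfect localization $\Z\hookrightarrow\Q$ with $L=0$ the analogous inclusion $\ann_M(L)\subseteq N$ fails, since there $\End(_AN)=\Z\subsetneq\Q=\End(_BM)$. I therefore expect to invoke the concrete structure of the perfect localization $KE^*\hookrightarrow L_K(E)$ for a finite $E$, namely the $\Z$-grading with finite-dimensional homogeneous components and the $I$-adic nature of $\frak T$, to conclude that $N$ admits no proper self-embedding in $M$ and that $M/N$ carries no $L$-torsion. Settling this point yields $\ann_M(L)\subseteq N$ and hence the asserted isomorphism $\End(_AN)\cong\End(_BM)$.
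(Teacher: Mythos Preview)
Your treatment of the first assertion is essentially the paper's: apply $B\otimes_A-$ to get $\Theta(\f)=1\otimes\f$, and use injectivity of $\eta$ (from $\frak T$-saturation) to see $\Theta$ is injective.

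For the second assertion your reduction to the inclusion $\ann_M(L)\subseteq N$ is correct, and you are right that abstract flat-epimorphism data cannot close this gap (your $\Z\hookrightarrow\Q$ counterexample is apt). But you stop at ``I expect to invoke the concrete structure\ldots'' without actually doing so, and this is precisely the step that carries the content. The paper does not try to show $A\xi=\eta(N)$ in your sense; instead it works directly with the generator. Given $\f\in\End(_BM)$ nonzero, write $\f(1+BL)=r+BL$ with $r=\sum_i\m_i\n_i^*\in L_K(E)$. The concrete input is that for any such $r$ one can find a real path $\m\in F(E)$ with $\m^*r\in KE^*$; since $\f\neq 0$ and $L$ is saturated one may moreover arrange $\m^*r\in KE^*\setminus L$, and then automatically $\m^*\notin L$. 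Now both $\m^*+L$ and $\m^*r+L$ are nonzero in the simple $A$-module $N$, so there is a (unique) $\t\in\End(_AN)$ with $\t(\m^*+L)=\m^*r+L$. Then $1\otimes\t$ and $\f$ agree on the generator $\m^*+BL$ of the simple $B$-module $M$, hence coincide. This is exactly the missing concrete argument you allude to, and it bypasses your ``two embedded copies of $N$'' difficulty entirely: rather than proving $A\xi\subseteq\eta(N)$, one pushes a single element of $M$ back into $\eta(N)$ by left multiplication with a ghost path and then uses simplicity of both $N$ and $M$.

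In short: your framework is sound and your identification of the obstacle is accurate, but the proposal as written is incomplete. The grading and $I$-adic structure you mention are not what the paper uses; the decisive fact is the much more elementary density-type statement that any element of $L_K(E)$ can be moved into $KE^*$ by a suitable $\m^*$.
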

\begin{proof} The first claim is obvious in view of the isomorphism $M\cong L_K(E)\otimes_{KE^*} (KE^*/L)$ and induced endomorphisms $1\otimes \t$ of $M$ where $1$ is the identity on $L_K(E)$ and $\t$ is an endomorphism of $_{KE^*}KE^*/L$. For the last claim, let $\f$ be an arbitrary non-zero endomorphism of $M$. $\f$ is uniquely determined by the image $\sum_i\m_i\n^*_i+L_K(E)L$ of $1+L_K(E)L$ where $\m_i, \, \n_i$ are appropriate paths in $F(E)$. Then by a standard argument there is an appropriate path $\m\in F(E)$ such that $\m^*(\sum_i\m_i\n^*_i)\in KE^*\setminus L$ and the image $\f(\m^*+L_K(E)L)=\m^*\f(1+L_K(E)L)=\m^*(\sum_i\m_i\n^*_i)+L_K(E)L \in KE^*+L_K(E)L$  of 
 $\m^*+L_K(E)L=\m^*(1+L_K(E)L)$ is not zero. Consequently $\f=1\otimes \t$ where $\t$ is an endomorphism of the simple $KE^*$-module $KE^*/L$ sending $\m^*+L$ to $\m^*(\sum_i\m_i\n^*_i)+L$, completing the proof. 
\end{proof}
\begin{remark}\label{refine} We shall refine Corollary \ref{endoring} in the next section when we discuss irreducible representations of Leavitt path algebras of not necessarily finite digraphs.
\end{remark}
\begin{theorem}\label{iso1} Let $L_1$ and $L_2$ be two maximal left ideals of $KE^*$ which are not open in the topology $\frak T$, i.e., all $_{KE^*}KE^*/L_i$ and $_{L_K(E)}M_i=L_K(E)/L_K(E)L\, (i=1, 2)$ are simple. Then $_{L_K(E)}M_1\cong _{L_K(E)}M_2$ if and only if $_{KE^*}KE^*/L_1\cong _{KE^*}KE^*/L_2$, that is, for such modules, the isomorphism problem reduces to one over $KE^*$. 
\end{theorem}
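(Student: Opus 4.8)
The plan is to recover the $KE^*$-module $KE^*/L_i$ intrinsically from the Leavitt module $M_i$, as its $KE^*$-socle; write $A=KE^*$ and $B=L_K(E)$ for brevity. First I note that a maximal left ideal $L$ that is not open is automatically $\frak T$-saturated: $BL\cap A$ is a left ideal of $A$ containing $L$, and $BL\cap A=A$ would give $1\in BL$, hence $BL=B$, i.e.\ $L$ open; so maximality forces $BL\cap A=L$. Thus Theorem \ref{simple1} applies and $M_i\cong B\otimes_A(A/L_i)\cong B/BL_i$. With this, the forward implication is immediate: applying the functor $B\otimes_A-$ to a $KE^*$-isomorphism $A/L_1\cong A/L_2$ produces a $B$-isomorphism $M_1\cong M_2$.

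For the converse I would regard each $M_i$ as a left $A$-module via $\f\colon A\hookrightarrow B$ and establish two torsion-theoretic facts. The first is that $M_i$ is $\frak T$-torsion-free: if an open left ideal $L$ (so $BL=B$) kills $x\in M_i$, then $Bx=(B\f(L))x=B(\f(L)x)=0$, and since $E$ is finite the ring $B$ is unital, whence $x=1\cdot x=0$. The second concerns the canonical $A$-linear map $\eta\colon A/L_i\to M_i$, $\bar a\mapsto \f(a)\overline 1$: its kernel is $(BL_i\cap A)/L_i=0$ by saturation, so its image $A\overline 1$ is a copy of the simple module $A/L_i$, while its cokernel $M_i/A\overline1\cong B/(\f(A)+BL_i)$ is $\frak T$-torsion, being a quotient of $B/\f(A)$; the last module is torsion because $B$ is the perfect localization of $A$ at $\frak T$ (Theorem \ref{loc}) and the canonical map of $A$ into its localization has torsion cokernel (\cite{s1}).

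These two facts force $A\overline1$ to be an essential $A$-submodule of $M_i$: given $0\neq n\in M_i$, torsion-freeness says $n$ is not torsion, while torsionness of $M_i/A\overline1$ yields an open $L$ with $Ln\subseteq A\overline1$; then $Ln\neq0$ (else $n$ would be torsion), so $An\cap A\overline1\supseteq Ln\neq0$, and as $n$ was arbitrary $A\overline1$ is essential. Since $A\overline1\cong A/L_i$ is simple, it is therefore the unique simple $A$-submodule of $M_i$, i.e.\ the $KE^*$-socle of $M_i$. Consequently a $B$-isomorphism $M_1\cong M_2$, being in particular a $KE^*$-isomorphism, carries the $KE^*$-socle of $M_1$ onto that of $M_2$ and yields $A/L_1\cong A/L_2$ as $KE^*$-modules. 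Equivalently, via the adjunction $\Hom_B(M_1,M_2)\cong\Hom_A(A/L_1,M_2)$, a nonzero $B$-map embeds the simple module $A/L_1$ into $M_2$, where it must coincide with the unique simple submodule $A/L_2$.

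I expect the essential-socle computation to be the crux: everything else is formal once the $KE^*$-socle of $M_i$ is identified with $A/L_i$. Within that computation the delicate point is the torsion of the cokernel $B/\f(A)$, and this is exactly where the finiteness of $E$ enters, guaranteeing through Theorem \ref{loc} that $B$ is a genuine perfect localization (with identity) rather than a mere ring of quotients; for infinite digraphs this step would have to be reexamined, in accordance with Remark \ref{refine}.
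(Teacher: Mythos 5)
Your proposal is correct, but it proves the theorem by a genuinely different route than the paper. The paper's argument is concrete and computational: an isomorphism $\f\colon M_1\to M_2$ is determined by the image $r+L_K(E)L_2$ of $1+L_K(E)L_1$, and one multiplies on the left by a suitable ghost path $\m^*$ (writing $r$ as a combination $\sum_i k_i\m_i\n_i^*$ and using (CK1)) so that $\m^*r$ lands in $KE^*$ while remaining nonzero modulo $L_K(E)L_2$; since $\m^*\notin L_1$, simplicity of the two $KE^*$-modules then turns $\m^*+L_1\mapsto \m^*r+L_2$ into an isomorphism $KE^*/L_1\cong KE^*/L_2$. You reach the same conclusion structurally: you show the canonical copy of $A/L_i$ inside $M_i$ is the unique simple $A$-submodule (the $KE^*$-socle), by combining $\frak T$-torsion-freeness of $M_i$ with $\frak T$-torsionness of the cokernel of the localization map, so that any $B$-isomorphism must carry socle to socle. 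Your torsion-theoretic steps are in fact the abstract counterpart of the paper's "routine arguments": the existence of $\m$ with $\m^*r\in KE^*$ nonzero is exactly the concrete manifestation of "$B/\f(A)$ is torsion and $M_2$ is torsion-free." What your version buys is generality and transparency: it uses nothing about paths or (CK1), works verbatim for any unital flat bimorphic (perfect) localization, identifies where finiteness of $E$ enters (unitality of $B$ and perfectness of $\frak T$, via Theorem \ref{loc}), and the essential-socle identification simultaneously recovers the endomorphism-ring statement of Corollary \ref{endoring}. What the paper's version buys is brevity and self-containedness at the level of the explicit $K$-basis structure of $L_K(E)$, which is the style used throughout Section \ref{fdigraph} where the localization machinery is no longer available.
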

\begin{proof} Let $\f:M_1\longrightarrow M_2$ be an arbitrary module isomorphism over $L_K(E)$. $\f$ is uniquely determined by the image $r+L_2$ of $1+L_1$. It is obvious by routine arguments that there is a path $\m\in F(E)$ such that $\f(\m^*+L_1)=\m^*f(1+L_1)=\m^*r+L\neq 0$ satisfying $\m^*r\in KE^*$. In view of the fact $\m^*\notin L_1$ this implies that $_{KE^*}KE^*/L_1\cong _{KE^*}KE^*/L_2$ under the module homomorphism sending $\m^*+L_1$ to $\m^*r+L_2$. The sufficiency is trivial. 
\end{proof}
Although finding maximal $\frak T$-saturated left ideals of $KE^*$ is, in general, not an easy task,
there are certain obvious maximal left ideals of $KE^*$ which are not open in the topology $\frak T$ whence they induce new irreducible representations of $L_K(E)$ which are not belonging to the set of special irreducible representations defined in \cite{an}. Namely, in the rest of this section we point out a way to construct simple modules of $L_K(E)$ by using simple modules over $KE^*$ defined by maximal left ideals of $KE^*$ which are not open with respect to the topology $\frak T$. One can list these simple modules, or equivalently maximal but not open left ideals of $KE^*$ in one theorem, however, for a particular use, it is better if we separate them. The first immediate but transparent result is the following. 
\begin{proposition}\label{newsim1} If $L$ is a maximal left ideal of $KE^*$ of infinite $K$-codimension, then $L$ is not open in the topology $\frak T$ whence $L_K(E)\otimes_{KE^*} KE^*/L$ is an irreducible
representation of $L_K(E)$.
\end{proposition}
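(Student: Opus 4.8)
The plan is to establish non-openness by a dimension count and then read off irreducibility from Theorem~\ref{simple1}. The key observation is that every \emph{open} left ideal of $KE^*$ has finite $K$-codimension; an ideal of infinite codimension therefore cannot be open.

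First I would show that $\dim_K (KE^*/I^l) < \infty$ for every $l$. Since $I$ is generated by the ghost arrows together with the sinks, each ghost arrow $a^*$ is a length-one path lying in $I$. Hence any path of $E^*$ of length $\geq l$, being a concatenation of at least $l$ ghost arrows, can be written as a product of $l$ factors each lying in $I$, and so lies in $I^l$. Consequently the images of the paths of length $< l$ span $KE^*/I^l$; as $E$ is finite there are only finitely many paths of each length, so this quotient is finite-dimensional over $K$.

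Next I would argue by contraposition. If $L$ were open, then $I^l \subseteq L$ for some $l$, so $KE^*/L$ would be a quotient of the finite-dimensional space $KE^*/I^l$ and thus of finite $K$-codimension. Since $L$ is assumed to have infinite $K$-codimension, $L$ is not open.

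Finally, to conclude irreducibility, I would note that the whole ring $KE^*$ is open (it contains $I$ itself), so the only left ideal properly containing the maximal left ideal $L$ is $KE^*$; thus $L$ is in fact maximal among the non-open left ideals. Theorem~\ref{simple1} then gives that the corresponding module $L_K(E)/L_K(E)L \cong L_K(E)\otimes_{KE^*} KE^*/L$ is simple, which is the assertion. The only step demanding slight care is the claim that paths of length $\geq l$ land in $I^l$ even though $I$ also contains the sinks; but the sinks merely impose further relations, shrinking the quotient, so the finite-dimensionality---and hence the whole argument---is unaffected.
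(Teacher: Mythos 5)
Your proposal is correct and takes essentially the same approach as the paper: both arguments show that $KE^*/I^l$ is finite-dimensional for every $l$ (the paper deduces this from $KE^*/I$ being a finite product of copies of $K$, you by observing that all ghost paths of length $\geq l$ lie in $I^l$ and only finitely many shorter paths remain), so an open left ideal would have finite $K$-codimension, contradicting the hypothesis, and irreducibility then follows from Theorem~\ref{simple1}. Your explicit check that a maximal left ideal which is not open is automatically maximal in the set of non-open left ideals is a detail the paper leaves implicit, but it is the same argument.
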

\begin{proof} Since $KE^*/I$ is a finite product of copies of $K$ indexed by regular vertices, all the powers $I^l\, (l\geq 1)$ have finite $K$-dimension. If $L$ is open, then it contains some power $I^l$, therefore $L$  has a finite $K$-codimension, a contradiction.  This completes the proof.
\end{proof}   
In view of Proposition \ref{newsim1} we first focus our attention on maximal finite codimensional left ideals of $KE^*$ which are not open in $\frak T$. From the proof of Proposition 4.4 \cite{ajm} about the existence of strong Schreier basis one sees by the standard Lewin's argument \cite{le1} that finite codimensional left ideals of $KE^*$ are finitely generated. 
Therefore cyclic modules over $L_K(E)$ induces by finite codimensional left ideals of $KE^*$ are finitely presented of finite length. 
\begin{theorem}\label{fpl} There is a one-to-one correspondence between finitely presented modules $M=L_K(1,n)m$ of finte length over $L_K(1, n)$ and finite codimensional left ideals $L=\ann_{K\langle y_1, \dots, y_n\rangle}m$ of $K\langle y_1, \dots, y_n\rangle$ which are not open in $\frak T$. In particular, the length $lg(M)$ is a difference of the length $lg(K\langle y_1, \dots, y_n\rangle)/L$ and the number of trivial subfactors of $K\langle y_1, \dots, y_n\rangle/L$.
\end{theorem}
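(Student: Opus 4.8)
The plan is to read the whole statement off the flatness of $L_K(1,n)$ over $KE^*=K\langle y_1,\dots,y_n\rangle$ together with Theorem \ref{simple1}, by analysing the exact functor $F=L_K(1,n)\otimes_{KE^*}-$. First I would record the facts special to this one-vertex graph: $KE^*=K\langle y_1,\dots,y_n\rangle$, the ideal $I$ is the augmentation ideal $\sum_j KE^*y_j$ with $KE^*/I=K$ the trivial module, and $\frak T$ is generated by the powers $I^l$, each of finite $K$-dimension. The decisive observation is that $F$ annihilates the trivial module: since $1=\sum_i x_iy_i\in L_K(1,n)I$ we get $L_K(1,n)I=L_K(1,n)$, hence $L_K(1,n)\otimes_{KE^*}(KE^*/I)=L_K(1,n)/L_K(1,n)I=0$. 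More generally $L_K(1,n)\otimes_{KE^*}(KE^*/L)=0$ exactly when $L_K(1,n)L=L_K(1,n)$, i.e.\ when $L$ is open, so that $M\ne 0\iff L$ is not open.

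Next I would erect the bijection from Theorem \ref{simple1} and Corollary \ref{rightid}. For $M=L_K(1,n)m$ the annihilator $L=\ann_{KE^*}m$ is automatically $\frak T$-saturated: by Corollary \ref{rightid} the left ideal $\ann_{L_K(1,n)}m$ equals $L_K(1,n)\bigl(KE^*\cap \ann_{L_K(1,n)}m\bigr)=L_K(1,n)L$, whence $L_K(1,n)L\cap KE^*=\ann_{KE^*}m=L$, and $M\cong L_K(1,n)\otimes_{KE^*}(KE^*/L)$ with $KE^*/L\hookrightarrow M$. Thus Theorem \ref{simple1} already supplies a bijection between cyclic modules and $\frak T$-saturated ideals, and the task reduces to matching \emph{finitely presented of finite length} on the module side with \emph{finite codimensional and not open} on the ideal side.

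The length formula is the cleanest ingredient. Choosing a composition series of the finite-dimensional (hence finite length) $KE^*$-module $KE^*/L$ and applying the exact functor $F$, each factor is sent either to $0$ (the trivial factors, by the first paragraph) or to a simple $L_K(1,n)$-module (the non-trivial factors, by the simple-module correspondence of Theorem \ref{simple1}, as a non-trivial maximal left ideal can never be open). Deleting the vanishing steps yields a composition series of $M$, so $M$ has finite length and $lg(M)=lg(KE^*/L)-\#\{\,\text{trivial subfactors of }KE^*/L\,\}$. For the forward inclusion of the correspondence, a finite codimensional $L$ is finitely generated by the quoted Lewin--Schreier argument, so $L_K(1,n)L$ is finitely generated and $M$ is finitely presented; finite codimension gives finite length as above, and non-openness gives $M\ne 0$.

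The genuine obstacle is the reverse inclusion: that a finitely presented $M$ of finite length forces $L$ to be \emph{finite} codimensional, not merely $\frak T$-saturated with finitely many non-trivial subfactors. Here I would exploit that for the finite graph $E$ the topology $\frak T$ is a \emph{perfect} localization (Theorem \ref{loc}), so finitely presented $L_K(1,n)$-modules are exactly the images under $F$ of finitely presented $KE^*$-modules modulo $\frak T$-torsion; thus $L$ may be taken finitely generated. The hard part is to rule out that $M$ arise from an infinite-dimensional non-trivial simple $KE^*$-module, which would make $M$ simple, hence of finite length, yet with $L$ of infinite codimension: one must show that a finitely generated such $L$ cannot have infinite codimension. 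This is precisely where the free ideal ring structure of $K\langle y_1,\dots,y_n\rangle$ and the Schreier--Lewin technique re-enter, controlling finitely generated left ideals and forcing finite $K$-codimension; together with the $\frak T$-torsion-freeness of $KE^*/L$ coming from saturation, this pins down finite codimension and closes the correspondence.
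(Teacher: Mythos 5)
Your framework is the same as the paper's — the bijection via Theorem \ref{simple1} and Corollary \ref{rightid}, the Lewin--Schreier argument giving the forward direction, and the derivation of the length formula by applying the exact functor $L_K(1,n)\otimes_{KE^*}-$ to a composition series of $KE^*/L$ (this last part is in fact more explicit than the paper's appeal to induction and coherence). But the step you yourself flag as the genuine obstacle is where the proposal breaks down, in two ways. First, your case analysis is incomplete: if $M$ is simple, Theorem \ref{simple1} only makes $L=\ann_{KE^*}m$ maximal \emph{among non-open} left ideals of $KE^*$, not maximal; the annihilators attached to irrational Chen words are exactly of this kind (saturated, of infinite codimension, far from maximal), so you cannot reduce the problem to the case where $KE^*/L$ is an infinite-dimensional \emph{simple} $KE^*$-module. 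Second, and more seriously, the lemma you gesture at in the last sentence --- that finite generation together with $\frak T$-saturation (torsion-freeness of $KE^*/L$) forces finite codimension --- is false: the left ideal $L=KE^*y_1$ is finitely generated, not open, $\frak T$-saturated (as one checks with the normal form of elements of $L_K(1,n)$), and of infinite $K$-codimension. So neither the fir structure of the free algebra nor saturation can ``pin down'' finite codimension by themselves; the finite length of $M$ must be used a second time, and your sketch never does so.

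The paper supplies precisely the missing ingredient: \emph{essentiality} of $L$. If $M$ is simple and $L$ were not essential in $KE^*$, pick a nonzero left ideal $J$ with $J\cap L=0$; then $J$ embeds into $KE^*/L$, and flatness of $L_K(1,n)$ over $KE^*$ embeds the nonzero module $L_K(1,n)\otimes_{KE^*}J\cong L_K(1,n)J$ into the simple module $M$, so $M$ is isomorphic to a left ideal of $L_K(1,n)$, i.e., to a minimal left ideal --- impossible, since $L_K(1,n)\,(n\geq 2)$ is simple and non-artinian, hence has no minimal left ideals. Once $L$ is known to be both finitely generated (Corollary \ref{rightid}, from finite presentation of $M$) and essential, Theorem (3.3) of Rosenmann--Rosset \cite{rr} --- finitely generated \emph{essential} left ideals of free algebras have finite codimension --- gives exactly the conclusion you need; note that essentiality is an indispensable hypothesis of that theorem, which is why your cited ingredients alone cannot work. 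The general finite-length case then follows, as in the paper, by induction on length using that $L_K(1,n)$ is hereditary and hence coherent, so that finitely generated submodules of finitely presented modules are again finitely presented.
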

\begin{proof} The case $n=1$ is trivial. Therefore one can assume $n\geq 2$. First we consider the case when $M$ is simple. Hence $L_K(1,n)L$ is a maximal left ideal of $L_K(1,n)$, consequently $L$ is a left essential ideal of $K\langle y_1, \dots, y_n\rangle$ in view of the fact that $L_K(1, n)$ does not contain minimal left ideals. Since $M$ is finitely presented, $L$ is a finitely generated left ideal of $K\langle y_1, \dots, y_n\rangle$ by Corollary \ref{rightid}. Therefore $L$ is finite codimensional by (3.3) Theorem \cite{rr}. This implies that the maximal ideal of $K\langle y_1, \dots, y_n\rangle$ contained in $L$ is also finite codimensional whence this ideal is a finite product of maximal finite codimensional ideals and at least one of them is not $I$ by the fact that $L$ is not open. This implies that there is a maximal left ideal of $K\langle y_1, \dots, y_n\rangle$ containing $L$ which is not open. However, this claim contradicts to Corollary \ref{rightid}. Henceforth, $L$ is a maximal not open left ideal of $K\langle y_1, \dots, y_n\rangle$. The case when $M$ is finitely presented of finite length follows now immediately from induction and the obseration that $L_K(1,n)$ is hereditary and hence coherent whence finitely generated of finitely presented modules are again finitely presented.
\end{proof}
It is worth to note that although all maximal infinite codimesional left ideals $L$ are $\frak T$-saturated and then induce irreducible representations of $L_K(1,n)$ there are infinitely generated not maximal left ideals $L$ of $K\langle y_1, \dots, y_n\rangle$ which produce simple modules over $L_K(1,n)$. For non-trivial examples, just look at representations defined by
irrational Chen words. Moreover, the proof of Theorem \ref{fpl} provides the following strange property of the maximal flat ring of quotients, i.e. the fc-localization of free algebras of rank $\geq 2$ defined in \cite{rr}.
\begin{corollary}\label{nofp} The maximal flat ring of quotients, i.e., the fc-localization of the free algebra of rank $\geq 2$ has no simple finitely presented representations.
\end{corollary}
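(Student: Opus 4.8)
The plan is to rerun the chain of reductions from the proof of Theorem~\ref{fpl}, but now with the finite-codimension topology in place of the $I$-adic topology $\frak T$; the whole point is that for the fc-topology the notions ``finite codimensional'' and ``open'' \emph{coincide}, so the slack that produced simple finitely presented $L_K(1,n)$-modules in Theorem~\ref{fpl} vanishes. Write $R=K\langle y_1,\dots,y_n\rangle$ with $n\geq 2$ and let $Q$ be its fc-localization, a flat bimorphic ring of quotients of $R$ whose Gabriel topology $\frak T_{fc}$ consists of the finite codimensional left ideals. First I would record the two structural facts about $Q$ that I need: it is a domain, and it is not a division ring. The first follows by the standard density argument: if $0\neq a,b\in Q$ with $ab=0$, left density gives $r\in R$ with $0\neq ra\in R$ and right density gives $s\in R$ with $0\neq bs\in R$, whence $(ra)(bs)=r(ab)s=0$ in the domain $R$, a contradiction. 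For the second, $y_1$ is a non-unit because $Ry_1$ has infinite codimension (its complement contains all words not ending in $y_1$), so $Ry_1\notin\frak T_{fc}$ and thus $Qy_1\neq Q$. A domain that is not a division ring has no minimal left ideals.

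Next I would argue by contradiction. Suppose $N$ is a simple finitely presented left $Q$-module and write $N=Q/\mathcal M$ for a maximal left ideal $\mathcal M$. Since $Q$ has no minimal left ideals, $\mathcal M$ is essential in $Q$: otherwise $J\cap\mathcal M=0$ for some nonzero left ideal $J$, maximality forces $Q=J\oplus\mathcal M$, and then $J\cong Q/\mathcal M=N$ would be a minimal left ideal. By Corollary~\ref{rightid} we have $\mathcal M=QL$ with $L=R\cap\mathcal M$, and finite presentation of $N$ makes $L$ a finitely generated left ideal of $R$, exactly as in the proof of Theorem~\ref{fpl}. I would then verify that $L$ is essential in $R$: given a nonzero left ideal $J\subseteq R$ and $0\neq j\in J$, essentiality of $\mathcal M$ yields $q\in Q$ with $0\neq qj\in\mathcal M$, and left density yields $r\in R$ with $rq\in R$ and $rqj\neq 0$; then $0\neq rqj=(rq)j\in Rj\cap\mathcal M\subseteq J\cap L$.

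Now comes the decisive input. With $L$ finitely generated and essential, (3.3)~Theorem of~\cite{rr} forces $L$ to have finite codimension in $R$. But a finite codimensional left ideal lies in the defining topology $\frak T_{fc}$, i.e.\ it is open, so $QL=Q$; hence $N=Q/QL=Q/Q=0$, contradicting the simplicity (nonvanishing) of $N$. This contradiction shows that no simple finitely presented $Q$-module exists, which is the assertion. The contrast with Theorem~\ref{fpl} is exactly the one advertised: there a finite codimensional left ideal could still fail to be open in the $I$-adic $\frak T$, leaving room for genuine finite-length representations, whereas for $\frak T_{fc}$ finite codimension \emph{is} openness.

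The main obstacle is not any individual step but the reliance on the external rigidity result (3.3)~Theorem of~\cite{rr}, that a finitely generated essential left ideal of a free algebra of rank $\geq 2$ is automatically finite codimensional. Everything else is the formal localization machinery already assembled in Corollary~\ref{rightid} and the proof of Theorem~\ref{fpl} together with the two elementary facts that $Q$ is a domain and not a division ring; the genuinely arithmetic content---that free algebras of rank $\geq 2$ possess enough finite codimensional left ideals to absorb every finitely generated essential one---is precisely what makes the fc-topology behave so differently from the $I$-adic one, and is what \cite{rr} supplies.
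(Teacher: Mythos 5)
Your skeleton is the paper's own: the paper proves Corollary \ref{nofp} by rerunning the proof of Theorem \ref{fpl} (simple finitely presented module $\Rightarrow$ finitely generated essential left ideal of $R=K\langle y_1,\dots,y_n\rangle$ $\Rightarrow$ finite codimensional by (3.3) of \cite{rr} $\Rightarrow$ open, hence dense, in the fc-topology $\Rightarrow$ the ideal extends to all of $Q$, a contradiction). However, your first ``structural fact'' is false, and this is a genuine gap. The fc-localization $Q$ is \emph{not} a domain: every power $I^l$ of the ideal $I=\sum_i Ry_i$ has finite codimension, so the fc-topology refines the $I$-adic topology $\frak T$, and therefore $Q$ contains the localization of $R$ at $\frak T$, which by Theorem \ref{loc} is the Leavitt algebra $L_K(1,n)$ itself (equivalently: the \emph{maximal} flat bimorphic ring of quotients contains every flat bimorphic ring of quotients of $R$, in particular $L_K(1,n)$). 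Hence $Q$ contains the zero divisors $y_1x_2=0$ (with $y_1,x_2\neq 0$) and the nontrivial idempotents $x_iy_i$. Your density argument breaks precisely at the words ``right density'': $Q$ is constructed from the Gabriel topology of finite codimensional \emph{left} ideals and is only a \emph{left} ring of quotients of $R$; $R$ is not dense on the right in $Q$, just as $K\langle y_1,\dots,y_n\rangle$ is not right-dense in $L_K(1,n)$ (by Theorem \ref{loc} it is $KE=K\langle x_1,\dots,x_n\rangle$ that sits right-densely in $L_K(1,n)$; indeed $x_1r\in K\langle y_1,\dots,y_n\rangle$ forces $r=0$). So the element $s$ with $0\neq bs\in R$ need not exist, and the domain conclusion evaporates.

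The gap is repairable, because the only thing you used the domain property for --- that $Q$ has no minimal left ideals --- is true and provable from tools you already invoke. Suppose $J$ is a minimal left ideal of $Q$. Left density gives $0\neq a\in J\cap R$, and minimality gives $J=Qa$. Since $R$ is a domain, $\ann_Q(a)\cap R=\ann_R(a)=0$; but every nonzero left ideal of $Q$ meets $R$ nontrivially (left density again), so $\ann_Q(a)=0$. Then $Q\cong Qa=J$ is simple as a left $Q$-module, i.e., $Q$ is a division ring, contradicting your (correct) observation that $Qy_1\neq Q$ because $Ry_1$ has infinite codimension. With this lemma substituted for the false domain claim, your remaining steps --- essentiality of $\mathcal M$, its descent to $L=\mathcal M\cap R$, finite generation via Corollary \ref{rightid} exactly as in Theorem \ref{fpl}, then (3.3) of \cite{rr} against openness in the fc-topology --- are sound and coincide with the paper's intended argument.
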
 
The argument of Theorem \ref{fpl} implies also the following important result.
\begin{corollary}\label{fpfd} Finitely presented modules with a generator $m$ over $L_K(E)$ of a finite digraph $E$ is of finite length if and only if $\ann_{KE^*}m$ is finite codimensional.
\end{corollary}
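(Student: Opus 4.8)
The plan is to read everything through Theorem~\ref{loc}, which exhibits $L_K(E)$ as a flat bimorphic (perfect) left localization of $KE^*$ at $\frak T$, and Corollary~\ref{rightid}. Writing $L=\ann_{KE^*}m$, the identity $\ann_{L_K(E)}m=L_K(E)L$ together with Corollary~\ref{rightid} shows that $L$ is $\frak T$-saturated and that $M\cong L_K(E)\otimes_{KE^*}(KE^*/L)$, the functor $L_K(E)\otimes_{KE^*}-$ being exact. Two facts I would record first. (i) This functor annihilates exactly the $\frak T$-torsion modules, in particular the trivial vertex-simples $S_v=KE^*/\frak m_v$ attached to regular vertices, since $KE^*/I\cong\prod_{v\ \mathrm{regular}}K$ as in the proof of Proposition~\ref{newsim1}. (ii) It carries every non-torsion simple $KE^*$-module to a simple $L_K(E)$-module, by the bijection between non-open maximal left ideals of $KE^*$ and maximal left ideals of $L_K(E)$ coming from Corollary~\ref{rightid} and Theorem~\ref{simple1}.

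For the easy implication, suppose $L$ is finite codimensional. Then $KE^*/L$ is finite dimensional, hence of finite length over the path algebra $KE^*$ of the finite quiver $E^*$, and every composition factor of a finite-dimensional module over a path algebra is a vertex-simple $S_v$. Applying the exact functor to a composition series of $KE^*/L$ yields a finite filtration of $M$ whose factors are $L_K(E)\otimes_{KE^*}S_v$, each of which by (i)--(ii) is either $0$ (when $v$ is regular) or simple (when $v$ is a sink). Therefore $M$ has finite length.

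For the converse I would imitate the proof of Theorem~\ref{fpl}. Since $L_K(E)$ is hereditary, hence coherent, every composition factor of the finitely presented module $M$ is again finitely presented, so by induction on the length it suffices to treat the simple case and then propagate finite codimensionality. If $M$ is simple, finite presentation forces the associated maximal $\frak T$-saturated left ideal $\frak m$ to be finitely generated over $KE^*$ by Corollary~\ref{rightid}, and finite codimensionality is extracted precisely as in Theorem~\ref{fpl}: when $L_K(E)$ has no minimal left ideals (as for $L_K(1,n)$ with $n\ge 2$) the ideal $\frak m$ is essential, whence by Lewin's argument \cite{le1} together with the free-algebra result (3.3) of \cite{rr} it is finite codimensional, the largest two-sided ideal of $KE^*$ inside $\frak m$ being a product of finite-codimensional maximal ideals not all equal to $I$ because $\frak m$ is not open. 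For the general finite-length case I would split the composition factors of $M$ into those supported at sinks or acyclic parts of $E$ (automatically of finite $K$-dimension, hence finite codimensional) and those of ``free/cyclic type'' handled by the transfer above, and recombine using that finite $K$-codimensionality of a cyclic module $KE^*/L$ is detected by the finite codimensionality of the largest two-sided ideal contained in $L$ and is closed under the extensions exhibited by the series.

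The main obstacle is this converse, and within it two points. First, the genuinely content-bearing fact that a \emph{finitely presented} simple $L_K(E)$-module cannot arise from an infinite-codimensional maximal left ideal (for instance one defined by an irrational Chen word), which is the form Corollary~\ref{nofp} takes for free algebras; the work is to carry the input (3.3) of \cite{rr} from $K\langle y_1,\dots,y_n\rangle$ to the path algebra $KE^*$ of an arbitrary finite quiver, reducing through the vertex-idempotent decomposition and the local cyclic structure to the free-algebra and finite-dimensional cases. Second, when $E$ has sinks or acyclic components $L_K(E)$ acquires minimal left ideals, so the essentiality step underlying the free-algebra argument fails and must be replaced by the decomposition of factors just described; verifying that this recombination indeed yields a finite-codimensional two-sided ideal annihilating the cyclic module $KE^*/L$ is the delicate bookkeeping that the phrase ``the argument of Theorem~\ref{fpl} implies'' conceals.
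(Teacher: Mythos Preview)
Your outline tracks the paper's proof: reduce to the simple case via coherence, use Corollary~\ref{rightid} to get $\ann_{KE^*}m$ finitely generated, then invoke the Rosenmann--Rosset argument (3.3) of \cite{rr} in the essential case. Two points deserve comment.

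In the sufficiency direction, your claim that every composition factor of a finite-dimensional $KE^*$-module is a vertex simple fails once $E^*$ has oriented cycles (already for a single loop, $K[y]/(y-1)$ is simple but not a vertex simple). This does no damage, since your items (i)--(ii) already handle arbitrary simple $KE^*$-modules; the parenthetical ``$v$ regular / $v$ a sink'' should simply be dropped.

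In the necessity direction, the paper dispatches the non-essential case more directly than your proposed decomposition by support type. If $L=\ann_{KE^*}m$ is not essential, choose a nonzero left ideal $J$ with $J\cap L=0$; exactness of $L_K(E)\otimes_{KE^*}-$ embeds $L_K(E)J$ into the simple $M$, forcing $M$ to be a minimal left ideal of $L_K(E)$, hence generated by a sink, giving codimension one at once. Your phrase ``supported at sinks \ldots\ automatically of finite $K$-dimension'' is misleading---$L_K(E)v$ is typically infinite-dimensional---though the intended conclusion (that $KE^*n$ is finite-dimensional for any generator $n$) is correct. Finally, the paper does not reduce to free algebras through vertex idempotents as you suggest; it runs the argument of (3.3) in \cite{rr} directly for $KE^*$ by invoking the strong Schreier bases of Proposition~4.4 in \cite{ajm}, which is precisely the extension you were looking for.
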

\begin{proof} Since all finitely generated projective modules over $L_K(E)$ are cyclic, finitely presented modules are also cyclic. Therefore it is not a restriction that finitely presented modules have a generator $m$. Finite codimensional left ideals of $KE^*$ is finitely generated by the existence of strong Schreier bases for quiver algebras, see Proposition 4.4 \cite{ajm}.
This shows the sufficience. For the necessity it is enough to verify the case of finitely presented simple modules because $L_K(E)$ is a coherent ring. By Corollary \ref{rightid} $\ann_{KE^*}m$ is finitely generated. If $\ann_{KE^*}m$ is not an essential left ideal of $KE^*$, then the exactness of the functor $L_K(E)\otimes_{KE^*}-$ implies that $L_K(E)m$ is isomorphic to a minimal left ideal of $L_K(E)$ whence it can be generted by some sink $v$. This implies $\ann_{KE^*}m$ is of codimension 1. Therefore one can assume that $\ann_{KE^*}m$ is essential. In this case using proposition 4.4 \cite{ajm} one can argue in the same way as in the proof of (3.3) Theorem \cite{rr} to obtain that $\ann_{KE^*}m$ is finite codimensional.
\end{proof}
Theorem \ref{simple1} together with
Theorem \ref{iso1} offers a way to construct additional new irreducible representations of Leavitt path algebras which are not the special ones discussed in \cite{an}. Fortunately, Theorem \ref{cyclicalgebra} will show that every finite-dimensional division algebra over a field $K$ can be realized as an endomorphism ring of appropriate irreducible representations of a suitable Leavitt algebra $L_K(1,n)$. Moreover, these examples emphasize also an obvious fact that a base field $K$ plays a vital role in the classification problem of irreducible representations, for example, there is only one four dimensional real division algebra, namely the quaternion skew field $\Hh$. Consequently, by Theorem \ref{cyclicalgebra} the real division algebra $\Hh$  can be realized as an endomorphism ring of simple modules over the real Leavitt algebra $L_{\R}(1,2)$. Because of its decisive role and for the sake of better understanding, we discuss the case of classical Leavitt algebras separately although one could insert this treatment in the general theory carried out in the next section when we deal with not necessarily finite digraphs.

\begin{theorem}\label{cyclicalgebra} Let $K$ be a field and $D$ a central finite-dimensional division $K$-algebra generated by elements of a non-zero $n$-tuple $\frak =(d_1, \dots, d_n)\in D^n$. Then $D$ can be realized as the endomorphism ring of a finitely presented simple module $S^D_{\frak r}=L_K(1,n)/L(1,n)L$ where $L$ is a kernel of a $K$-algebra homomorphism from $K\langle y_1, \dots, y_n\rangle$ onto $D$ sending $y_i$ to $d_i\in D$ and $L_K(1,n)$ is viewed as a left flat localization of $K\langle y_1, \dots, y_n\rangle \, (y_i=x_i^*)$ with respect to the Gabriel (ideal) topology $\frak T$ defined by the ideal $I$ generated by all $y_i$'s. 
\end{theorem}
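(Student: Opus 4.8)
The plan is to set $A=K\langle y_1,\dots,y_n\rangle$, let $\pi\colon A\to D$ be the surjective $K$-algebra homomorphism sending $y_i\mapsto d_i$ (which exists and is onto because $D$ is generated by the $d_i$), and put $L=\ker\pi$, a two-sided ideal of $A$. Since $D$ is a central finite-dimensional division $K$-algebra, $L$ is a maximal two-sided ideal of finite $K$-codimension equal to $\dim_K D$. First I would verify that $L$, viewed as a left ideal, is \emph{not open} in $\frak T$: by the argument in Proposition \ref{newsim1}, an open left ideal contains some power $I^l$ and hence contains $y_i^l$ for each $i$; but $\pi(y_i)=d_i$ is invertible in the division algebra $D$, so $y_i^l\notin L$, whence $L$ is not open. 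This also shows $L$ contains no $y_i$-power, confirming it is genuinely a new representation and not a special one.

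Next I would check that $L$ is a \emph{maximal left} ideal of $A$, so that $A/L$ is a simple left $A$-module; this is where the division-algebra hypothesis does the real work. Since $A/L\cong D$ as rings and $D$ is a division ring, the only left ideals of $A$ containing $L$ correspond to left ideals of $D$, of which there are only $0$ and $D$; hence $L$ is maximal as a left ideal. By Theorem \ref{fpl} (applied with $M$ simple, using that $L$ is finite codimensional and not open), the module $S^D_{\frak r}=L_K(1,n)/L_K(1,n)L\cong L_K(1,n)\otimes_A(A/L)$ is a finitely presented simple left $L_K(1,n)$-module, and $L=\ann_A m$ for the image $m$ of $1$.

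The decisive step is the endomorphism-ring computation, and this is where I would invoke Corollary \ref{endoring}: because $L$ is a maximal left ideal of $A$ not open in $\frak T$, that corollary gives a canonical isomorphism
\[
\End\bigl({}_{L_K(1,n)}S^D_{\frak r}\bigr)\;\cong\;\End\bigl({}_A A/L\bigr).
\]
It then remains to identify $\End({}_A A/L)$ with $D$ itself. Since $A/L\cong D$, an $A$-endomorphism of $A/L$ is a left $A$-linear, equivalently left $D$-linear, self-map of $D$, and every such map is right multiplication by an element of $D$; thus $\End({}_A A/L)\cong D^{\mathrm{op}}$ via $r\mapsto(x\mapsto xr)$. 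Composing, $\End({}_{L_K(1,n)}S^D_{\frak r})\cong D^{\mathrm{op}}$, and because $D$ is a division algebra this is isomorphic to $D$ (indeed any finite-dimensional division algebra is anti-isomorphic to itself through any involution, or one simply records the opposite algebra, which is again a central division $K$-algebra of the same dimension). The paper's phrasing ``$D$ can be realized as the endomorphism ring'' is satisfied either way.

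The main obstacle I anticipate is purely bookkeeping about sides and opposites: the generators $y_i=x_i^*$ live in $KE^*$, the module is a \emph{left} module, and the endomorphism ring of a simple module $A/L$ with $A/L$ a division ring is naturally $D^{\mathrm{op}}$ rather than $D$. One must state clearly whether the intended realization is of $D$ or of $D^{\mathrm{op}}$, and since $D^{\mathrm{op}}$ is again a central finite-dimensional division $K$-algebra, the statement holds as written (every such $D$ arising as some $D^{\mathrm{op}}$). The genuinely nontrivial inputs are all imported: finite-dimensionality of $A/L$ forcing $L$ finitely generated (Theorem \ref{fpl}, via \cite{rr}), the reduction of $\operatorname{End}$ to the $A$-level (Corollary \ref{endoring}), and the non-openness of $L$ (Proposition \ref{newsim1}); once these are in hand the remaining verifications are the routine identifications sketched above.
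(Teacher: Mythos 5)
Your proposal is correct and follows essentially the same route as the paper: $L$ is a maximal two-sided ideal with division-ring quotient, hence a maximal left ideal; it is not open (your invertibility argument works once you pick an index with $d_i\neq 0$ --- the hypothesis makes the tuple, not every entry, nonzero --- whereas the paper instead notes that $I^l\subseteq L$ would force $L=I$ since both are maximal prime ideals); finite codimension gives finite generation of $L$ via the Schreier--Lewin formula; and Corollary \ref{endoring} reduces the endomorphism ring to that of $K\langle y_1,\dots,y_n\rangle/L$ over $K\langle y_1,\dots,y_n\rangle$, exactly as in the paper. One correction: your parenthetical claim that any finite-dimensional division algebra is anti-isomorphic to itself through an involution is false (this holds precisely when the Brauer class of $D$ has order at most $2$), but nothing is lost, because your other two resolutions of the $D$ versus $D^{\mathrm{op}}$ bookkeeping --- writing endomorphisms of left modules on the right, or running the construction on $D^{\mathrm{op}}$, which is a central division algebra of the same dimension generated by the same $n$-tuple --- are both sound, and the paper itself passes over this side issue in silence.
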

\begin{proof} The assumption implies that some of the $d_i$-s are not zero. Since $L$ is a maximal two-sided ideal of $K\langle y_1, \dots, y_n\rangle$ defined by the homomorphism from $K\langle y_1, \dots, y_n\rangle$ to $D$ sending $y_i$ to $d_i$, $L$ does not contain all $y_i$ and is both a maximal left and right ideal of $K\langle y_1, \dots, y_n\rangle$. We claim that $L$ is not open in the topology $\frak T$. Otherwise $L$ could contain some powers $I^l \,(l>0)$
whence being maximal prime ideals, $L=I$, a contradiction. The theorem follows immediately now from Theorem \ref{simple1} and Corollary \ref{endoring} if we show that $S$ is finitely presented, i.e. $L$ is a finitely generated left ideal of $KE^*$. Fortunately, this claim is an obvious consequence of the Schreier-Lewin formula \cite{le1} on the rank of one-sided ideals of finite codimension over $K$. In particular, $D$ is also a finitely presented $K$-algebra.
\end{proof}
The finite dimensionality of $D_K$ is used only at one point in the proof to ensure that there is a surjective homomorphism from a free associative algebra of finite rank to $D$ such that not all veriables go to $0$. Therefore one can drop the finite dimensionality of $D_K$ and instead can make a formally much weaker assumption that a $K$-algebra $D$ is finitely generated. By the Hilbert Nullstellensatz this condition is equivalent to the finite $K$-dimension of $D$ if $D$ is commutative. Since modules are surely not isomorphic if their annihilator ideals do not coincide, one can use Theorem \ref{iso1} to more generally reformulate Theorem \ref{cyclicalgebra} for getting new types of pairwise non-isomorphic simple modules with endomorphism rings isomorphic to $D$ as follows, where a division ring $D$ is a $K$-algebra generated by at most $n$ elements.
\begin{theorem}\label{cycalg1} Let $D$ be a division ring containing $K$ in its center such that $D$ can be generated by $n$ not necessarily different elements as a $K$-algebra. For any nonzero ordered $n$-tuple $\frak r=(r_1, \dots, r_n)\in D^n$ generating $D$ let $L=L_{\frak r}$ be the kernel of the surjective $K$-algebra homomorphism from $K\langle y_1, \dots, y_n\rangle$ to $D$ sending $y_i$ to $r_i$. Then   $S_{\frak r}=L_K(1,n)/L_K(1,n)L$ is a simple left $L_K(1, n)$-module whose endomorphism ring is isomorphic to $D$. Moreover, for any two generating $n$-tuples ${\frak r}_1, \, {\frak r}_2$ of $D$  the associated modules $S_{{\frak r}_i}$ are isomorphic if and only if
$L_{{\frak r}_1}=L_{{\frak r}_2}$, i.e., ${\frak r}_1={\frak r}_2$. In particular there are uncountably many pairwise non-isomorphic simple modules over $L_{\R}(1,2)$ whose endomorphism ring is isomorphic to the skew field $\Hh$ of quaternions.  
\end{theorem}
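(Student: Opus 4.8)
The plan is to obtain the simplicity, the endomorphism ring and the isomorphism criterion as direct applications of Theorem \ref{simple1}, Corollary \ref{endoring} and Theorem \ref{iso1} to the ideals $L_{\frak r}$, and to reserve the only genuine work for the quaternionic count at the end. Throughout write $R=K\langle y_1,\dots,y_n\rangle$ and let $\phi_{\frak r}\colon R\to D$ be the surjection $y_i\mapsto r_i$ with kernel $L_{\frak r}$.

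First I would verify that $L_{\frak r}$ is a maximal left ideal of $R$ that is not open in $\frak T$. Maximality is automatic: $R/L_{\frak r}\cong D$ is a division ring, hence a simple left module over itself, and pulling this back along $\phi_{\frak r}$ shows $L_{\frak r}$ is a maximal left (and right) ideal. For non-openness, note that $L_{\frak r}$, being the kernel of a homomorphism onto a division ring, is a maximal two-sided ideal, in particular prime. If it were open it would contain some power $I^l$, and primeness would force $I\subseteq L_{\frak r}$; since $I$ is itself a maximal two-sided ideal (with $R/I\cong K$), this gives $L_{\frak r}=I$ and hence $r_i=\phi_{\frak r}(y_i)=0$ for all $i$, contradicting $\frak r\neq 0$. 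With $L_{\frak r}$ maximal and not open, Theorem \ref{simple1} yields that $S_{\frak r}=L_K(1,n)/L_K(1,n)L_{\frak r}$ is simple, and Corollary \ref{endoring} identifies its endomorphism ring with $\End(_R R/L_{\frak r})$. Since $L_{\frak r}$ is two-sided, the $R$-endomorphisms of the cyclic module $R/L_{\frak r}\cong D$ are exactly the right multiplications by elements of $D$, so this ring is $D$ (up to the harmless anti-isomorphism $D\cong D^{\mathrm{op}}$, which disappears for $\Hh$), establishing $\End(_{L_K(1,n)}S_{\frak r})\cong D$.

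For the isomorphism criterion I would invoke Theorem \ref{iso1}: as both $L_{{\frak r}_1},L_{{\frak r}_2}$ are maximal and not open, $S_{{\frak r}_1}\cong S_{{\frak r}_2}$ if and only if $R/L_{{\frak r}_1}\cong R/L_{{\frak r}_2}$ as left $R$-modules. Because each $L_{{\frak r}_i}$ is two-sided, it is precisely the annihilator of the cyclic module $R/L_{{\frak r}_i}$; isomorphic modules have equal annihilators, so the isomorphism forces $L_{{\frak r}_1}=L_{{\frak r}_2}$, and the converse is trivial. Here I would also remark that $L_{{\frak r}_1}=L_{{\frak r}_2}$ means the two surjections differ by an automorphism of $D$, i.e.\ the tuples agree up to an automorphism of $D$; this is the precise reading of the phrase ``${\frak r}_1={\frak r}_2$'' in the statement.

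Finally, for the quaternionic conclusion take $n=2$, $K=\R$, $D=\Hh$ (central four-dimensional over $\R$, generated by $i,j$), and consider the family ${\frak r}_t=(i,\,t+j)$ for $t\in\R$. Each ${\frak r}_t$ generates $\Hh$, since the unital $\R$-subalgebra it generates contains $\R$, hence $j=(t+j)-t$, hence $i$ and $j$, hence all of $\Hh$; thus each $S_{{\frak r}_t}$ is a simple $L_{\R}(1,2)$-module with $\End\cong\Hh$. It remains to see that distinct values of $t$ give distinct kernels, and this is the one step requiring actual structural input. If $L_{{\frak r}_t}=L_{{\frak r}_{t'}}$, then by the previous paragraph ${\frak r}_{t'}=\sigma({\frak r}_t)$ for some $\R$-algebra automorphism $\sigma$ of $\Hh$; by Skolem--Noether every such $\sigma$ is inner and fixes $\R$ pointwise, and $\sigma(i)=i$ forces the conjugating element into the centralizer $\R[i]\cong\mathbb{C}$, so $\sigma$ fixes $\R\oplus\R i$ and merely rotates the plane $\langle j,k\rangle$. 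Applying $\sigma$ to $t+j$ therefore leaves the real part $t$ unchanged, so $\sigma(t+j)=t'+j$ forces $t=t'$. Hence $\{L_{{\frak r}_t}\}_{t\in\R}$ are pairwise distinct, and the isomorphism criterion produces uncountably many pairwise non-isomorphic simple $L_{\R}(1,2)$-modules with endomorphism ring $\Hh$. The main obstacle is exactly this last distinctness argument, the only place where the internal structure of $\Hh$ rather than a formal appeal to the cited results is used.
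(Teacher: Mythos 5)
Your proof is correct, and its skeleton is the paper's own: non-openness of $L_{\frak r}$ because it is a maximal (hence prime) two-sided ideal different from $I$, so $I^l\subseteq L_{\frak r}$ would force $L_{\frak r}=I$; then Theorem \ref{simple1} for simplicity, Corollary \ref{endoring} for the endomorphism ring, and Theorem \ref{iso1} plus equality of annihilators of cyclic modules for the isomorphism criterion. Where you genuinely diverge is precisely at the two points where the paper is loose, and your versions are the more careful ones. The paper deduces from $L_{{\frak r}_1}=L_{{\frak r}_2}$ that ${\frak r}_1={\frak r}_2$ literally, ``being the image of the ordered $n$-tuple $(y_1+L,\dots,y_n+L)$''; as you observe, the two identifications $K\langle y_1,\dots,y_n\rangle/L\cong D$ differ by an automorphism of $D$, so equal kernels only give ${\frak r}_2=\sigma({\frak r}_1)$ for some $\sigma\in\mathrm{Aut}_K(D)$ (for $K=\R$, $D=\mathbb{C}$, $n=1$ the tuples $(i)$ and $(-i)$ already have the same kernel). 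Consequently the paper's final count --- take all pairs of distinct purely imaginary unit quaternions and stop --- silently relies on that literal equality; to repair it one must quotient by $\mathrm{Aut}(\Hh)\cong\mathrm{SO}(3)$ and note, say, that the angle between the two imaginary units is an invariant, so uncountably many orbits survive. Your route avoids this entirely: the one-parameter family $(i,\,t+j)$ together with Skolem--Noether (automorphisms of $\Hh$ are inner, fix real parts, and fixing $i$ confines the rotation to the $(j,k)$-plane) pins the kernels down one at a time, giving an airtight and self-contained distinctness proof at the cost of a smaller, less canonical family. Your side remark that Corollary \ref{endoring} really produces $\End({_{K\langle y_1,\dots,y_n\rangle}}K\langle y_1,\dots,y_n\rangle/L)\cong D^{\mathrm{op}}$, which is harmless for $\Hh$ since quaternion conjugation is an anti-automorphism, is likewise a point the paper passes over without comment.
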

\begin{proof}The assumption implies that $L$ is a two-sided ideal which does not coincide with $I$, the ideal generated by all $y_i$. Consequently, $L$ is not open in the Gabriel ideal topology generated by $I$ whence $S=L_K(1,n)\otimes_{KE^*} D\cong L_K(1, n)/L_K(1, n)L$ is an irreducible representation of $L_K(1,n)$. Now, we show that if $S_{{\frak r}_i}$ are isomorphic, then ${\frak r}_1={\frak r}_2$. By Theorem \ref{iso1} the associated simple $K\langle y_1, \dots, y_n\rangle$-modules $K\langle y_1, \dots, y_n\rangle/L_{{\frak r}_i}$ are isomorphic, too. Therefore their annihilator ideals $L_{{\frak r}_i}$ coincide. Consequently, being the image of the ordered $n$-tuple $(y_1+L, \dots, y_n+L)$ with
$L_{{\frak r}_1}=L=L_{{\frak r}_2}$, the $n$-tuples
 ${\frak r}_1$ and ${\frak r}_2$ are equal. Therefore to complete the proof of Theorem \ref{cycalg1} it is enough to observe the fact that any pair $q=(q_1, q_2)$ of different purely imaginary quaternions $q_1, q_2$ of length 1 generates the real quaternion algebra $\Hh$. 
\end{proof}
\begin{remark}\label{force1} Theorem \ref{cycalg1} provides evidence for applications of localizations. The Leavitt algebras $L_K(1, n)\, (n>1)$ are simple whence their simple modules are faithful. Fortunately, passing to corresponding (not necessarily simple) modules over free associative algebras one can use their annihilator ideals to obtain pairwise nonisomorphic simple modules with the same endomorphism division ring.
\end{remark}
In view of Theorem \ref{cycalg1} we call simple modules defined by a division algebra $D$ which is a $K$-algebra generated by at most $n$ elements, together with a non-zero generating $n$-tuple ${\frak r}=(r_1, \dots, r_n)\in D^n$ \emph{Hilbert modules} and denote them as $S^D_{\frak r}$ of $L_K(1, n)$. Namely, $S^D_{\frak r}$ is the simple $L_K(1, n)/L_K(1, n)L$ module where $L$ is a maximal left and right ideal of $K\langle y_1, \dots, y_n\rangle \, (y_i=x^*_i)$ which is the kernel of the  $K$-algebra homomorphism from $K\langle y_1, \dots, y_n\rangle$ onto $D$ sending $y_i$ to $r_i$. The requirement that $\frak r\neq 0\in D^n$ ensures that $L$ is not open in the topology $\frak T$. Furthermore it is important to emphasize that generator
systems of $L$ as a two-sided ideal are, in general, not generator systems of $L$ as a left module over $K\langle y_1, \dots, y_n\rangle$. It is also worth noting that although generator systems of the left ideal $L$ as a module over $K\langle y_1, \dots, y_n\rangle$ form a set of relations for $S^D_{\frak r}$, $S^D_{\frak r}$ may admit much simpler and nicer relations, for example, see Remark \ref{chen} and Remarks \ref{hilbert0} below.  Pointing out a close relation to  Hilbert's generalized Nullstellensatz [\cite{mat1} Theorem 5.1 or \cite{k1} Section 1-3] and emphasizing both its beauty and importance we state separately the particular case of commutative finite dimensional field extensions $F$ of $K$ which can be generated by $n$ not necessarily different elements. Of course,  these examples can be extended naturally to  arbitrary (not necessarily finite) digraphs having enough closed paths. We shall investigate these aspects in the more general setting of arbitrary digraphs in the next section.  
\begin{corollary}\label{hilbert} Let $F$ be a finite algebraic field extension of $K$ generated by elements of a nonzero $n$-tuple ${\frak r}=(r_1, \dots, r_n)\in F^n$. If $L$ is a kernel of a homomorphism from $K\langle y_i,\dots, y_m\rangle$ onto $F$ sending $y_i$ to $r_i$. Then $F$ is isomorphic to the endomorphism ring of the finitely presented irreducible representation $S^F_{\frak r}=L_K(1, n)/L_K(1, n)L$.
\end{corollary}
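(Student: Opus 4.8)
The plan is to recognize Corollary \ref{hilbert} as precisely the commutative specialization of Theorem \ref{cycalg1}. A finite algebraic field extension $F$ of $K$ is in particular a division ring containing $K$ in its center—indeed, being commutative, its center is all of $F\supseteq K$—and since $\dim_K F<\infty$ it is finitely generated as a $K$-algebra, here by the entries of $\frak r$. Thus $F$ satisfies exactly the hypotheses of Theorem \ref{cycalg1} for the given nonzero generating tuple $\frak r=(r_1,\dots,r_n)$. I would flag at the outset that one cannot invoke Theorem \ref{cyclicalgebra} directly, since its centrality requirement fails here (the center of $F$ equals $F$, not $K$, unless $F=K$); the correct tool is the noncentral reformulation \ref{cycalg1}.

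First I would verify that $L=\ker\f$, where $\f\colon K\langle y_1,\dots,y_n\rangle\twoheadrightarrow F$ sends $y_i\mapsto r_i$, is a maximal two-sided ideal, being the kernel of a surjection onto a field, and that $L$ is not open in $\frak T$. Since $\frak r\neq 0$, not every $r_i$ vanishes, so $\f$ does not factor through the trivial factor $K\langle y_1,\dots,y_n\rangle/I\cong K$; hence $L\neq I$. Exactly as in the proof of Theorem \ref{cyclicalgebra}, if $L$ contained a power $I^l$, then the maximality and primeness of both $L$ and $I$ would force $L=I$, a contradiction. Therefore $L$ is maximal and not open, so by Theorem \ref{simple1} the module $S^F_{\frak r}=L_K(1,n)/L_K(1,n)L\cong L_K(1,n)\otimes_{K\langle y_1,\dots,y_n\rangle}(K\langle y_1,\dots,y_n\rangle/L)$ is irreducible.

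Next I would settle finite presentation and the endomorphism ring together. As $\dim_K F<\infty$, the ideal $L$ is of finite codimension in $K\langle y_1,\dots,y_n\rangle$, so the Schreier--Lewin formula \cite{le1} shows $L$ is a finitely generated left ideal and hence $S^F_{\frak r}$ is finitely presented. Since $L$ is maximal and not open, Corollary \ref{endoring} gives $\End(_{L_K(1,n)}S^F_{\frak r})\cong\End(_{K\langle y_1,\dots,y_n\rangle}K\langle y_1,\dots,y_n\rangle/L)$, and because $K\langle y_1,\dots,y_n\rangle/L\cong F$ with the action given by left multiplication through $\f$, the module endomorphisms are precisely right multiplications by elements of $F$, yielding $F^{\mathrm{op}}$. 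Commutativity of $F$ then identifies $F^{\mathrm{op}}$ with $F$, so $\End(_{L_K(1,n)}S^F_{\frak r})\cong F$, as desired.

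The one genuinely delicate step, which I would highlight, is this last endomorphism-ring identification: the relevant commutant is intrinsically $\End(_{K\langle y_1,\dots,y_n\rangle}F)\cong F^{\mathrm{op}}$, and it is exactly the noncommutative version of this computation in Theorem \ref{cycalg1} that produces the opposite algebra. The commutative hypothesis here is precisely what collapses $F^{\mathrm{op}}$ to $F$ and, via Hilbert's generalized Nullstellensatz, makes the finite-dimensionality of $F$ equivalent to its being a finitely generated division $K$-algebra—closing the loop with the general statement.
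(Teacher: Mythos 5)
Your proposal is correct and follows essentially the same route as the paper, which states Corollary \ref{hilbert} without separate proof precisely because it is the commutative specialization of Theorem \ref{cycalg1} (irreducibility and the endomorphism ring) combined with the finite-presentation argument of Theorem \ref{cyclicalgebra} via the Schreier--Lewin formula, exactly the two ingredients you assemble. Your two added precisions --- that Theorem \ref{cyclicalgebra} cannot be cited directly because a proper field extension $F$ is not central over $K$, and that the commutant is intrinsically $F^{\mathrm{op}}$, collapsed to $F$ by commutativity --- are accurate refinements of the paper's implicit argument rather than a different method.
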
  
\begin{remark}\label{chen}
Let ${\frak r}=(r_1, \dots, r_n)\in K^n$ be a nonzero point, i.e., not the origin of the affine space $K^n$. It is well-known and, in fact, easy to verify that elements $y_i-r_i$ generate the two-sided ideal kernel $L$ of a $K$-algebra homomorphism from $K\langle y_1, \dots, y_n\rangle$ onto $K$ sending $y_i$ to
$r_i\in K$. Let $f_{\frak r}=\sum_{r_i\neq 0} r_ix_i-1\in L_K(1,n)$. Then $S^K_{\frak r}$ can be generated by an element $z$ subject to $f_{\frak r}z=0$.
Namely, one can see that the relation $f_{\frak r}z=0$ is equivalent to $r_iz=y_iz$ for all indices $i$.    For $r_j=0$, one trivially has $(y_j-r_j)z=y_jz=-(y_jf_{\frak r})z=-y_j(f_{\frak r}z)=0$. Moreover, for each index $j$ with $r_j\neq 0$ the equality $y_jf_{\frak r}=r_j-y_j$ implies $0=-y_j(f_{\frak r}z)=-(y_jf_{\frak r})z=(y_j-r_j)z$. The converse assertion is obvious by the Cuntz-Krieger condition (CK2) because  $\sum\limits_{i=1}^{n} r_ix_iz=\sum\limits_{i=1}^{n} x_i(r_iz)=\sum\limits_{i=1}^n x_iy_iz=(\sum\limits_{i=1}^n x_iy_i)z=z$ implies $f_{\frak r}z=0$
\footnote{I learned this argument from F. Mantese in her private communication.}.
Consequently, $\ann_{L_K(1, n)}z$ contains the left ideal $\bar L$ of $K\langle y_1,\dots, y_n\rangle$ generated by $r_i- y_i \,(i=1, \dots, n)$ which is free on the set $\{r_i-y_i \,|\, i=1, \dots, n\}$ of codimension 1 whence $L=\bar L$ holds and $\{r_i-y_i \,|\, i=1, \dots, n\}$ is a basis of the left ideal $L$, too. Therefore our claim follows immediately from Corollary \ref{hilbert}. We deduce in this manner
that the following irreducible representations of classical Leavitt algebras $L_K(1,n)$ found by Mantese\footnote{In a private communication.} are special cases of Hilbert modules.
\end{remark}

\begin{theorem}[Mantese]\label{man} To any nonzero element
$\frak r\in K^n \,(n\geq 2)$ let $f_{\frak r}=-1+\sum r_ix_i \in K\langle x_1, \dots, x_n\rangle$. Then
$S^K_{\frak r}=L_K(1, n)/L_K(1, n)f_{\frak r}$ is an irreducible representation of $L_K(1, n)$ whose endomorphism ring is isomorphic to $K$.
\end{theorem}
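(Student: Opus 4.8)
The plan is to recognise Theorem~\ref{man} as the special case $F=K$ of Corollary~\ref{hilbert}. For the nonzero tuple $\mathfrak{r}\in K^n$ and the kernel $L=\ker\big(K\langle y_1,\dots,y_n\rangle\to K,\ y_i\mapsto r_i\big)$, that corollary already delivers a finitely presented irreducible $L_K(1,n)$-module $L_K(1,n)/L_K(1,n)L$ with endomorphism ring isomorphic to $K$. Beyond quoting it, the only thing left to establish is that the single relation $f_{\mathfrak{r}}=-1+\sum_i r_ix_i$, written in the real loops $x_i$, presents exactly the same module as the annihilator $L$, written in the ghost loops $y_i=x_i^*$. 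So the first and main step I would carry out is the left-ideal identity $L_K(1,n)f_{\mathfrak{r}}=L_K(1,n)L$.

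For that identity I would argue entirely through the two Cuntz--Krieger relations of $L_K(1,n)$, namely $y_jx_i=\delta_{ij}$ (CK1) and $\sum_i x_iy_i=1$ (CK2). Writing $\bar L=\sum_j K\langle y_1,\dots,y_n\rangle(y_j-r_j)$ for the left ideal generated by the elements $y_j-r_j$, one inclusion comes from the one-line computation $y_jf_{\mathfrak{r}}=-y_j+\sum_i r_i(y_jx_i)=-(y_j-r_j)$, valid for every $j$ (the indices with $r_j=0$ giving the same formula). This shows each $y_j-r_j=-y_jf_{\mathfrak{r}}\in L_K(1,n)f_{\mathfrak{r}}$, hence $L_K(1,n)\bar L\subseteq L_K(1,n)f_{\mathfrak{r}}$. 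For the reverse inclusion I would use (CK2) together with the centrality of the scalars $r_i$ to rewrite $f_{\mathfrak{r}}=-\sum_i x_iy_i+\sum_i x_ir_i=-\sum_i x_i(y_i-r_i)\in L_K(1,n)\bar L$. Equivalently, on the canonical generator $z=1+L_K(1,n)f_{\mathfrak{r}}$ these computations say precisely that $f_{\mathfrak{r}}z=0$ is equivalent to $y_iz=r_iz$ for all $i$. Either way one gets $L_K(1,n)f_{\mathfrak{r}}=L_K(1,n)\bar L$.

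It then remains to identify $\bar L$ with $L$ inside $K\langle y_1,\dots,y_n\rangle$. The inclusion $\bar L\subseteq L$ is clear since each $y_j-r_j\in L$. For the converse I would run the elementary augmentation reduction: by induction on word length, peeling off the rightmost letter via $y_{i_1}\cdots y_{i_k}=y_{i_1}\cdots y_{i_{k-1}}(y_{i_k}-r_{i_k})+r_{i_k}\,y_{i_1}\cdots y_{i_{k-1}}$, whose first summand lies in $\bar L$, every monomial is congruent modulo $\bar L$ to the scalar $r_{i_1}\cdots r_{i_k}$. Hence $K\langle y_1,\dots,y_n\rangle=K+\bar L$, so $\bar L$ has $K$-codimension one; being contained in the proper, codimension-one ideal $L$, it must equal $L$. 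Combining this with the previous step yields $L_K(1,n)f_{\mathfrak{r}}=L_K(1,n)L$, and Theorem~\ref{man} then follows verbatim from Corollary~\ref{hilbert}.

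The main obstacle is exactly this last identification $\bar L=L$: it is the one genuinely non-formal point, expressing that the economical relation $f_{\mathfrak{r}}$ already cuts out the full maximal ideal, and it rests on the codimension-one (equivalently, freeness) behaviour of one-sided ideals of free algebras, where the Schreier--Lewin rank formula predicts a free basis of size $1+1\cdot(n-1)=n$, matching the $n$ generators $y_i-r_i$. Everything else is routine (CK1)/(CK2) bookkeeping. I would only remark that the hypothesis $\mathfrak{r}\neq 0$ is what keeps $L$ distinct from the augmentation ideal $I=(y_1,\dots,y_n)$, so that $L$ is not open in $\mathfrak{T}$ and the quotient $S^K_{\mathfrak{r}}$ is genuinely nonzero and simple.
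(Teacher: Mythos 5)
Your proof is correct and follows essentially the same route as the paper: Theorem~\ref{man} is proved there via Remark~\ref{chen}, which contains precisely your two computations --- $y_jf_{\frak r}=-(y_j-r_j)$ by (CK1) and $f_{\frak r}=-\sum_i x_i(y_i-r_i)$ by (CK2) --- identifies the left ideal generated by the $y_i-r_i$ with the kernel $L$ by the same codimension-one (freeness) reasoning, and then invokes Corollary~\ref{hilbert}. Your explicit augmentation reduction merely spells out what the paper dismisses as ``easy to verify,'' so the two arguments coincide in substance.
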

We shall generalize this result to arbitrary digraphs in Section \ref{fdigraph}.
\begin{remarks}\label{hilbert0}
\begin{enumerate}
\item  For a polynomial $q\in K[x]$ of degree $l$, define the \emph{reciprocal polynomial} $q^*$ of $q$ by the equality $q(x)=x^lq^*(x^{-1})$. Let $q$ be an irreducible polynomial with 1 as the constant term and $\d \in F(E)$ a period based at vertex $v$ starting with an arrow $a\in s^{-1}(v)$. If $S^q_{\d}$ is an irreducible Rangaswamy representation of $L_K(E)$ defined by $q$ and $\d$, i.e., $S^q_{d}=L_K(E)/L_K(E)q(\d) \cong L_K(E)\otimes_{K[\d]} K[\d]/K[\d]q(\d)$ are isomorphic to $KE/KEq(\d)$ as modules over $KE$, then for every arrow $b\in s^{-1}(v)\setminus \{a\}$ one immediately has $b^* v=b^*q(\d) \in L_K(E)q(\d)$. This implies that $\sum_{b\in s^{-1}(v)\setminus \{a\}}L_K(E)b^*\subseteq L_K(E)q(\d)$. Consequently, $L_K(E)q(\d)$ contains the left ideal $L$ of $KE^*$ generated by
all $b^* \, (s(b)=v, \, b \neq a)$ and $q^*(\d^*)=({\d^*})^l+r_1({\d^*})^{l-1}+\cdots +r_l$ if  $q=r_lx^l+\cdots +r_1x+1$. It is noteworthy that $q(\d)$ is, in general, not equal to $(\d)^lq^*(\d^*)$ and therefore except for the case when all vertices of $\d$ are regular, $L_K(E)\otimes_{K[\d]} K[\d]/K[\d]q(\d)$ is not isomorphic to  $L_K(E)\otimes_{K[\d^*]} K[\d^*]/K[\d^*]q(\d^*)$ although $K[\d]/K[\d]q(\d)$ and $K[\d^*]/K[\d^*]q^*(\d^*)$ are isomorphic field extensions of $K$. For a case when some of vertices of $\d$ are not regular, i.e., an infinite emitter, the simple module $L_K(E)\otimes_{K[\d]} K[\d]/K[\d]q(\d)$ is only a homomorphic image of $L_K(E)\otimes_{K[\d]} K[\d^*]/K[\d^*]q^*(\d^*)$ which is generally not simple.
It is very important to underline a basic fact that except for a few obvious cases, Rangaswamy irreducible representations are not
Hilbert modules. For example, let $K=\Q,\, q=x^2+1\in \Q[x]$ and $\d=x_1x_2\in L_{\Q}(1, n) \,(n>1)$ together with the field extension $F=\Q[i]$ of $K$ by the root $i=\sqrt{-1}$ of $f$, then the Rangaswamy module $S^q_{\d}=L_{\Q}(1, n)/ L_{\Q}(1, n)q(x_1x_2)$ is not isomorphic to any Hilbert module $S^F_{\frak r}$ where $\frak r$ is an arbitrary non-zero ordered pair $(r_1, r_2,\dots, r_n)$ of elements from $\Q[i]$. We will return to this discussion later. 
\item Let $F$ be an algebraic field extension of $K$ generated by $\frak r=(r_1, \dots, r_n)\neq 0\in F^n$. Then by passing from $r_i\neq 0$ to some multiple $ t_ir_i \,(0\neq t_i\in K)$ if necessary, one can assume  without loss of generality that for $r_i\neq 0$ there are irreducible polynomials $-1+f_i\in K[x] \, (f_i\in K[x]x)$ satisfying $-1+f_i(r_i)=0$. The map $y_i\mapsto r_i\in F$ defines a maximal ideal $L$ of $KE^*$ containing $-1+f_i(y_i)=y_i^{l_i}f^*_i=y^{l_i}_if(x_1, \dots, x_n)$ where $f(x_1, \dots, x_n)=-1+\sum_{r_i\neq 0} f^*_i(x_i)=f$ and $l_i$ is the degree of $f_i$, and $f^*_i$ are reciprocal polynomials of $f_i$, respectively. It is important to underline
that $-1+f_i(y_i)\, (i=1, \dots, n)$ do not generate either the two-sided or the left ideal $L$ 
if $\dim {_KF}\geq 2$. However,
one can use  Schreier technique \cite{le1}, \cite{rr} to find a generator set of $_{K\langle y_1, \dots, y_n\rangle}L$ including $1-f_i(y_i)\, (i=1, 2, \dots, n)$. 
 We shall study this question in Section \ref{fdigraph} in the discussion of irreducible representations of infinite digraphs.
\item Since $K\langle y_1, \dots, y_n\rangle\, (n>1)$ is a free associative algebra, one can use Schreier bases [see Rosenmann and Rosset \cite{rr}] to find sets of free generators for an arbitrary left ideal $L$. Consequently, for (finitely generated) ideals of the commutative polynomial algebra $K[y_1, \dots, y_n]\, (n>1)$ one can view them as left ideals of free associative algebras and find their set of free generators and then reduce this set to generators over $K[y_1, \dots, y_n]$ by using commutativity relations. Since
ideals of commuative polynomial algebras are not projective (or equivalently free by the  Serre-Quillen-Suslin Theorem), they do not have a free set of generators, only a minimal set of generators. This explains why it is more difficult to find their Gr\"obner bases than their
Schreier bases when we view them as left (right) ideals of the corresponding free associative algebras.   
\end{enumerate}
\end{remarks}
It is now straightforward to reformulate the above results and constructions on irreducible representations of classical Leavitt algebras for the case of arbitrary finite digraphs $E$. First we fix some notation for the sake of convenience. Let $I$ be the two-sided ideal of $KE^*$ generated by all ghost arrows and sinks of $E$, that is, sources of $E^*$. Consequently, $KE^*/I$ is a finite product of copies  of $K$ indexed by regular vertices of $E$. Furthermore, any ring homomorphism from $KE^*$ to a unital K-algebra $B$ is uniquely determined by the images, i. e., the substitution of ghost arrows and vertices of $E$ by elements of $B$. However, in contrast to the case of free associative algebras, not every function $\phi: E^0\cup (E^1)^* \rightarrow B$ such that the corresponding image of $\phi$ generates the $K$-algebra $B$, defines a surjective algebra homomorphism from $KE^*$ onto $B$. There are obviously additional constraints needed on $\phi$ for the natural extension of $\phi$ to become a well-defined algebra homomorphism. Therefore we introduce the following convention. When we speak about \emph{a substitution of vertices and ghost arrows of $E^*$ by elements in $B$}, then we already assume that this substitution canonically induces a well-defined algebra homomorphism from $KE^*$ onto $B$.
Moreover, by Theorem \ref{loc} $L_K(E)$ is a left perfect localization of $KE^*$ with respect to the Gabriel (ideal) topology $\frak T$ defined by $I$. It is important to keep in mind that in this case both $L_K(E)$ and $KE^*$ are unital $K$-algebras. Let $D$ be a division $K$-algebra which is a factor of $KE^*$ by an ideal $L$ not containing $I$. This requirement is equivalent to the condition that the image of the set $E_g$ consisting of all ghost arrows and sinks, i.e., singular vertices of $E$ is not zero. In particular, any nonzero function
 $\frak r: E_g\rightarrow D$ satisfying $D=K\langle {\frak r}(E_g)\rangle$ defines a maximal ideal $L$ of $KE^*$ not containing $I$ which is the kernel of the $K$-algebra homomorphism from $KE^*$ to $D$ sending $y\in E^g$ to ${\frak r}(y)\in D$. Then $S^D_{\frak r}=L_K(E)\otimes_{KE^*} D\cong L_K(E)/L_K(E)L$ is an irreducible representation of  $L_K(E)$ with the endomorphism ring canonically isomorphic to $D$. We summarize these results in the following 
\begin{theorem}\label{divi1} Let $E$ be a finite digraph with the set $E_g$ of ghost arrows and sinks; $K$ a field together with a division $K$-algebra $D$ which is a factor of $KE^*$. If $\frak r: E_g\rightarrow D$ is an arbitrary nonzero function satisfying $K\langle {\frak r}(E_g)\rangle =D$, then $S_{\frak r}=L_K(E)\otimes_{KE^*} D\cong L_K(E)/L_K(E)L$ is an irreducible representation of $L_K(E)$ where $L$ is the kernel of the algebra homomorphism from $KE^*$ to $D$ sending $y\in E_g$ to $\frak r(y)\in D$. Two such irreducible representations defined by ${\frak r}_i\, (i=1, 2)$ are isomorphic if and only if ${\frak r}_1= {\frak r}_2$.
\end{theorem}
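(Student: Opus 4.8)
The plan is to transport the entire argument for the classical algebras $L_K(1,n)$ (Theorems \ref{cyclicalgebra} and \ref{cycalg1}) to an arbitrary finite digraph, using that $L_K(E)$ is a perfect left localization of $KE^*$ (Theorem \ref{loc}) and that simplicity, endomorphism rings and isomorphism all descend to $KE^*$ (Theorem \ref{simple1}, Corollary \ref{endoring}, Theorem \ref{iso1}). First I would record that $L=\ker(\f\colon KE^*\twoheadrightarrow D)$ is a maximal left ideal: the $KE^*$-action on $KE^*/L\cong D$ factors through $D$, so the $KE^*$-submodules of $D$ are precisely its ring left ideals; as $D$ is a division ring these are only $0$ and $D$, whence $D$ is a simple left $KE^*$-module and $L$ is a maximal left (and, symmetrically, right) ideal.

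Next I would show that $L$ is not open in $\frak T$; this is the single step where the hypotheses ${\frak r}\neq 0$ and $K\langle{\frak r}(E_g)\rangle=D$ are genuinely used. Together they give $I\not\subseteq L$, i.e.\ not every generator of $I$ maps to $0$ in $D$. Since $D$ is a division ring, $L$ is a prime two-sided ideal; hence if $L$ were open it would contain a power $I^l$, and primeness would force $I\subseteq L$, a contradiction. Thus $L$ contains no power of $I$ and is not open, so it is automatically $\frak T$-saturated (its saturation $L_K(E)L\cap KE^*$ is a left ideal containing $L$, hence equal to $L$ or to $KE^*$ by maximality, and the latter would mean $L_K(E)L=L_K(E)$, i.e.\ $L$ open). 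Theorem \ref{simple1} then gives that $S_{\frak r}=L_K(E)\otimes_{KE^*}(KE^*/L)\cong L_K(E)/L_K(E)L$ is simple.

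The endomorphism ring now follows from Corollary \ref{endoring}, which yields $\End(_{L_K(E)}S_{\frak r})\cong\End(_{KE^*}KE^*/L)$. Because the action on $D=KE^*/L$ factors through $D$, every $KE^*$-endomorphism is a $D$-endomorphism of $_DD$, i.e.\ right multiplication by an element of $D$, so $\End(_{KE^*}D)\cong D$ with the paper's convention, exactly as in the classical case. For the isomorphism criterion the sufficiency is trivial, as equal data define the same ideal $L$ and the same module. For necessity, if $S_{{\frak r}_1}\cong S_{{\frak r}_2}$ then, both arising from maximal non-open left ideals, Theorem \ref{iso1} gives $KE^*/L_1\cong KE^*/L_2$ as $KE^*$-modules; isomorphic modules share an annihilator, and since each $L_i$ is two-sided the annihilator of $KE^*/L_i$ is $L_i$ itself, so $L_1=L_2=:L$. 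Reading ${\frak r}_i$ off as the image of $E_g$ under the projection $KE^*\to KE^*/L$ then returns ${\frak r}_1={\frak r}_2$.

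The genuinely new content, as opposed to assembly, is the non-openness of $L$: it is precisely there that one must combine the \emph{primeness} coming from $D$ being a division algebra with $I\not\subseteq L$ coming from ${\frak r}\neq0$, and nothing less will keep $L$ from absorbing a power of $I$. The subtlest bookkeeping lies in the final step, where one must ensure that $L_1=L_2$ recovers the defining function on the nose rather than merely up to an automorphism of $D$; making the identification $KE^*/L\cong D$ canonical is where I would be most careful, since a nontrivial $K$-automorphism of $D$ fixing $L$ would a priori produce a distinct ${\frak r}$ defining the very same module.
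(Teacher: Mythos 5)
Your assembly follows the paper's intended route almost verbatim: the paper obtains Theorem \ref{divi1} by exactly the reduction you describe --- $L$ is a maximal (two-sided, hence left) ideal because $D$ is a division ring; $L$ is not open because it is prime and ${\frak r}\neq 0$ forces $I\not\subseteq L$ (this is the argument of Theorem \ref{cyclicalgebra}); simplicity then comes from Theorem \ref{simple1}, the endomorphism ring from Corollary \ref{endoring}, and the isomorphism criterion from Theorem \ref{iso1} together with the coincidence of annihilators, just as in the proof of Theorem \ref{cycalg1}. Your explicit verification that a maximal non-open left ideal is automatically $\frak T$-saturated is a detail the paper leaves implicit, but otherwise the two arguments coincide.

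However, the caveat in your last sentence is not mere bookkeeping: it is a genuine gap, and it afflicts the paper's own statement and proof just as much as your final step ("reading ${\frak r}_i$ off as the image of $E_g$ under the projection"), which inherits it. From $L_{{\frak r}_1}=L_{{\frak r}_2}=L$ one can only recover $\frak r$ up to the choice of a $K$-algebra isomorphism $KE^*/L\cong D$, i.e., up to an element of $\mathrm{Aut}_K(D)$. Concretely, let $E$ be the rose with two petals, $K=\R$, $D=\Hh$, ${\frak r}_1=(i,j)$ and ${\frak r}_2=(j,i)$: conjugation by $i+j$ is an inner automorphism $\sigma$ of $\Hh$ interchanging $i$ and $j$, so $\phi_{{\frak r}_2}=\sigma\circ\phi_{{\frak r}_1}$, hence $L_{{\frak r}_1}=L_{{\frak r}_2}$ and $S_{{\frak r}_1}=S_{{\frak r}_2}$ are literally the same module, yet ${\frak r}_1\neq{\frak r}_2$. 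Thus the ``only if'' half of the final assertion is false as stated; the correct invariant is the kernel: $S_{{\frak r}_1}\cong S_{{\frak r}_2}$ if and only if $L_{{\frak r}_1}=L_{{\frak r}_2}$, equivalently ${\frak r}_2=\sigma\circ {\frak r}_1$ for some $\sigma\in\mathrm{Aut}_K(D)$. When $\mathrm{Aut}_K(D)$ is trivial (e.g.\ $D=K$, as in Theorem \ref{man}) the statement as written is fine, and the paper's conclusion that $L_{\R}(1,2)$ has uncountably many pairwise non-isomorphic simple modules with endomorphism ring $\Hh$ also survives, since the kernels $L_{\frak r}$ still form an uncountable family. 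So your proof is exactly as correct as the paper's, and your instinct about where the argument is fragile points at a real defect rather than at something a more careful identification could repair.
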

One can now combine Theorem \ref{divi1} with the construction of simple Hilbert modules $S^D_{\frak r}$ to extend considerably the class of Hilbert modules by using Proposition \ref{newsim1} as follows. With the notation and hypothesis of Theorem \ref{divi1} consider an arbitrary but
fixed simple left module $T$ defining a primitive $K$-domain $D$ generated by at most $E_g$ elements.
Hence $T$ is a faithful left $D$-module. Let $\frak r: E_g\rightarrow D$ be an arbitrary nonzero functions with $P=K\langle \frak r(E_g) \rangle$. Then $T$ becomes a simple left $L_K(E)$-module if $\sum_{\in E^0}v$ acts as an identity map and every $z\in E_g$ acts on $T$ by multiplying with $\frak r(z)$. If $D$ is a skew field, then we already know that an annihilator of any nonzero element of $T$ is a maximal left ideal of $KE^*$ which is not open in the topology $\frak T$. If $D$ is a domain which is not a skew field, then $T$ cannot have finite dimension over $K$. Namely, if $T$ is finite dimensional over $K$, then $D$ must be a matrix ring over a finite dimensional division $K$-algebra, whence a skew field, a contradiction. Therefore $T$ is infinite dimensional over $K$ when an annihilator of any non-zero element of $T$ is a maximal left ideal of $KE^*$ which is not closed in the topology $\frak T$.  Consequently $T_{\frak r}=L_K(E)\otimes_{KE^*} T$ is an irreducible representation of $L_K(E)$ and two such representations $T_{{\frak r}_i}\, (i=1, 2)$ are isomorphic if and only if ${\frak r}_1={\frak r}_2$.
These considerations are summarized  as follows.
\begin{theorem}\label{primitive1} Let $D$ be a primitive $K$-domain together with a faithful irreducible representation $T$. If a nonzero function $\frak r: E_g \rightarrow D$ together with the substitution $\frak r(z)$ for $z\in E_e$ and $1$ for $\sum_{v\in E^0}v$ makes $T$ an irreducible representation of $KE^*$, then   $T_{\frak r}=L_K(E)\otimes_{KE^*} T$ is a simple left $L_K(1, n)$-module whose endomorphism ring is canonically isomorphic to $\End(_DT)$. Moreover, two modules $T_{{\frak r}_i}\, (i=1, 2)$ are isomorphic if and only if ${\frak r}_1={\frak r}_2$
\end{theorem}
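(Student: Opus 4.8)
The plan is to reduce everything to a statement about a single maximal left ideal of $KE^*$ and then feed that into Theorem \ref{simple1}, Corollary \ref{endoring} and Theorem \ref{iso1}. First I would fix a nonzero $t\in T$. Since the substitution $\frak r$ (with $\sum_{v\in E^0}v$ acting as the identity) makes $T$ an irreducible $KE^*$-module, $L:=\ann_{KE^*}t$ is a maximal left ideal of $KE^*$ with $T\cong KE^*/L$, and hence by right exactness of the tensor product (cf. Theorem \ref{loc}) one has $T_{\frak r}=L_K(E)\otimes_{KE^*}T\cong L_K(E)/L_K(E)L$. Everything then hinges on showing that $L$ is \emph{not} open in $\frak T$.

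This non-openness is the decisive step, and I would split it according to the nature of $D$. If $D$ is a skew field, then its unique simple module is ${}_DD$, so $T\cong {}_DD$ and $L$ coincides with the two-sided kernel $N$ of the substitution map $\phi\colon KE^*\to D$. Because $\frak r\neq 0$, some $z\in E_g$ has $\phi(z)\neq 0$, so $z\notin N$; as $z\in I$ this gives $N\not\supseteq I$, and since $N$ is a completely prime ideal ($KE^*/N\cong D$ is a domain) it cannot contain any power $I^l$, so $L$ is not open. If instead $D$ is a domain that is not a skew field, then $T$ cannot be finite dimensional over $K$: a faithful simple module of finite $K$-dimension would, by the Jacobson density theorem, force $D$ to be a full matrix algebra over a division ring, and being a domain this would make $D$ a skew field, a contradiction. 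Hence $L$ has infinite $K$-codimension, and Proposition \ref{newsim1} shows $L$ is not open. In either case $L$ is a maximal left ideal of $KE^*$ that is not open, so Theorem \ref{simple1} gives that $T_{\frak r}\cong L_K(E)/L_K(E)L$ is a simple left $L_K(E)$-module.

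For the endomorphism ring I would apply Corollary \ref{endoring} in the maximal-not-open case, which furnishes a canonical isomorphism $\End({}_{L_K(E)}T_{\frak r})\cong \End({}_{KE^*}KE^*/L)=\End({}_{KE^*}T)$. Since the $KE^*$-action on $T$ factors through $\phi$, whose image is all of $D$ because $\frak r(E_g)$ generates $D$, a $K$-linear map commutes with the $KE^*$-action exactly when it commutes with $D$; thus $\End({}_{KE^*}T)=\End({}_DT)$, which is a division ring by Schur's lemma. This yields the asserted isomorphism $\End({}_{L_K(E)}T_{\frak r})\cong\End({}_DT)$.

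Finally, for the isomorphism criterion I would reuse the argument of Theorems \ref{cycalg1} and \ref{divi1}. By Theorem \ref{iso1}, an isomorphism $T_{{\frak r}_1}\cong T_{{\frak r}_2}$ over $L_K(E)$ reduces to an isomorphism of the underlying simple $KE^*$-modules $(T,{\frak r}_1)\cong(T,{\frak r}_2)$; isomorphic simple modules have equal two-sided annihilators, which by faithfulness of $T$ over $D$ are exactly $\ker\phi_1=\ker\phi_2$. As each ${\frak r}_i(z)$ is recovered as the image of $z$ in $D\cong KE^*/\ker\phi_i$, equality of the kernels forces ${\frak r}_1={\frak r}_2$, the converse being trivial. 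I expect the only genuinely substantive point to be the dichotomy in the second paragraph, and in particular the density argument ruling out a finite-dimensional $T$ when $D$ is a non-division primitive domain, since this is precisely what licenses the appeal to Proposition \ref{newsim1}; every remaining step is a direct application of results already in place.
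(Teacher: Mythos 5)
Your proposal is correct and takes essentially the same route as the paper's own (inline) proof preceding the theorem: the paper uses the identical dichotomy — for $D$ a skew field the annihilator of a nonzero element is the kernel of the substitution map, which cannot be open because it is a prime ideal not containing $I$ (this is the Hilbert-module argument of Theorem \ref{cyclicalgebra}), while for a primitive domain that is not a skew field the Wedderburn/density argument forces $\dim_K T=\infty$ so that Proposition \ref{newsim1} applies — followed by the same appeals to Theorem \ref{simple1}, Corollary \ref{endoring} and Theorem \ref{iso1}. Note only that your final inference (equal kernels force ${\frak r}_1={\frak r}_2$) reproduces verbatim a subtlety already present in the paper's Theorem \ref{cycalg1}: equal kernels determine ${\frak r}$ only up to a $K$-algebra automorphism of $D$, so both your argument and the paper's establish the isomorphism criterion only modulo $\mathrm{Aut}_K(D)$.
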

Among the most important sources for examples of primitive domains are Weyl algebras $A_n (n\in \N)$ over fields of characteristic $0$ and simple
one-sided principal domains... These examples emphasize the important role of base fields, and connect Leavitt path algebras to other intensely researched subjects like rings of differential operators or Cohn's theory of domains and skew fields.

As we now show, employing a similar argument, one can clearly remove the requirement that $D$ is a primitive $K$-domain in Theorem \ref{primitive1} to obtain another way to construct simple modules. In particular, all the
approaches to constructing irreducible representations above are incorporated in the next result. 
\begin{theorem}\label{primitive2} Let $P$ be a primitive $K$-algebra together with a simple faithful $P$-module $T$. If a nonzero function $\phi: E_g \rightarrow D$ together with the substitution $\phi(z)$ for $z\in E_e$ and $1$ for $\sum_{v\in E^0}v$ makes $T$ an irreducible representation of $KE^*$, then $T_{\phi}=L_K(E)\otimes_{KE^*} T$ is an irreducible representation of $L_K(E)$ whose endomorphism ring is canonically isomorphic to $\End(_PT)$. 
\end{theorem}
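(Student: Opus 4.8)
The plan is to reduce the statement to the localization machinery already in place for $KE^*\rightarrow L_K(E)$, by showing that the annihilator of any nonzero vector of $T$ is a maximal left ideal of $KE^*$ that is \emph{not open} in $\frak T$. Fix $0\neq t\in T$. Since the $KE^*$-action on $T$ (through $\phi$) makes $T$ simple, we have $T=KE^*t$, and $L:=\ann_{KE^*}t$ is a maximal left ideal with $T\cong KE^*/L$ as $KE^*$-modules. Granting that $L$ is not open, Theorem \ref{simple1} then gives that
\[
T_\phi=L_K(E)\otimes_{KE^*}T\cong L_K(E)\otimes_{KE^*}(KE^*/L)\cong L_K(E)/L_K(E)L
\]
is a simple left $L_K(E)$-module, and Corollary \ref{endoring} identifies its endomorphism ring with $\End(_{KE^*}T)$, exactly as in the proof of Theorem \ref{primitive1}. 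By our standing convention the substitution $\phi$ induces a surjection $KE^*\twoheadrightarrow P$, so the $KE^*$-action on $T$ is the $P$-action restricted along $\phi$, and hence $\End(_{KE^*}T)=\End(_PT)$, which is the asserted endomorphism ring.

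The decisive step is the non-openness of $L$, and here I would use primitivity in place of the domain hypothesis of Theorem \ref{primitive1}. Since $\phi$ is nonzero, some ghost arrow or sink $z\in E_g\subseteq I$ satisfies $\phi(z)\neq 0$, so $\phi(I)$ is a nonzero two-sided ideal of $P$; as $P$ is primitive it is in particular prime. Now $\phi(I)T$ is a $P$-submodule of the simple module $T$, and it is nonzero (otherwise $\phi(I)\subseteq\ann_P T=0$ by faithfulness, contradicting $\phi(z)\neq 0$), so simplicity forces $\phi(I)T=T$, and therefore $\phi(I)^lT=T$ for every $l\geq 1$. Because $I$ is two-sided we have $I^lKE^*=I^l$, whence $I^lT=\phi(I^l)T=\phi(I)^lT=T\neq 0$; if $I^l$ were contained in $L=\ann_{KE^*}t$ we would get $I^lT=I^lt=0$, a contradiction. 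Thus $L\not\supseteq I^l$ for all $l$, i.e. $L$ is not open in $\frak T$, completing the reduction.

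The main obstacle is exactly this non-openness argument. In Theorem \ref{primitive1} one could invoke Proposition \ref{newsim1}, because a faithful simple module over a $K$-domain that is not a division ring is forced to be infinite dimensional over $K$; that route is unavailable here, since a general primitive $K$-algebra such as $M_n(K)$ carries finite dimensional faithful simple modules, so $L$ may genuinely have finite $K$-codimension. What replaces the dimension count is the primeness of $P$ guaranteed by the existence of the faithful simple module $T$: it prevents the nonzero ideal $\phi(I)$ from acting nilpotently, uniformly in $l$ and independently of $\dim_K T$. The endomorphism ring assertion then needs only Corollary \ref{endoring}, as refined in this section (cf. Remark \ref{refine}): any nonzero $L_K(E)$-endomorphism of $T_\phi$ is determined by the image of a generator, and multiplying by a suitable $\mu^*\in F^*(E)$ returns that image into $KE^*+L_K(E)L$, so it is induced by a $KE^*$-endomorphism of $T\cong KE^*/L$; since the $KE^*$-action is the $P$-action along the surjection $\phi$, this yields the canonical isomorphism $\End(_{L_K(E)}T_\phi)\cong\End(_PT)$.
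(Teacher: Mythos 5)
Your proposal is correct and follows essentially the same route as the paper: the paper's proof likewise reduces everything to showing that $\ann_{KE^*}m$ is not open for $0\neq m\in T$, noting that an open annihilator would contain some ideal $I^l$, which would then lie in $\ann_{KE^*}T=\ker\phi$, contradicting the hypothesis, and then relies on Theorem \ref{simple1} and Corollary \ref{endoring} exactly as you do. Your only deviation is cosmetic but welcome: where the paper tersely says the inclusion $I^l\subseteq\ker\phi$ ``contradicts the assumption,'' you make the missing step explicit by deriving $\phi(I)T=T$ (hence $\phi(I)^lT=T$ for all $l$) from simplicity and faithfulness, which is precisely the justification the paper leaves implicit.
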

\begin{proof} To prove Theorem \ref{primitive2} it is enough to verify that the annihilator $\ann_{KE^*} m$ of any nonzero element $m\in T$ is not open with respect to the Gabriel topology $\frak T$. Namely, if
 $\ann_{KE^*} m$ is open with respect to $\frak T$, then it must contain some proper power $I^l$ of the ideal $I$ generated by all ghost arrows and sinks. Since $I^l$ is an ideal of $KE^*$, it must be contained in the annihilator of $\ann_{KE^*}T$ which is the kernel $\ker \phi$ of $\phi$ which contradicts the assumption. This completes the verification. 
\end{proof}

Till now we've dealt exclusively with maximal $\frak T$-saturated left ideals of $KE^*$ which are at the same time maximal left ideals of $KE^*$. Moreover, maximal non-open left ideals of $KE^*$ are constructed by using nice factor algebras of $KE^*$ like division rings, domains or primitive algebras together with suitable substitutions. In this way one can immediately see that the module theory of simple algebras, which are factors of $KE^*$ by ideals which are not equal to $I$, the ideal generated by all ghost arrows and sinks, is part of that for $L_K(E)$. Examples of both maximal $\frak T$-saturated left ideals of $KE^*$ which are not maximal left ideals of $KE^*$, and further maximal non-open left ideals of $KE^*$ are presented in the next section when we discuss
the case of not necessarily finite digraphs. 

\section{Simple modules of arbitrary digraphs}
\label{fdigraph}   

In this section we remove the finiteness condition from digraphs. From now on $E$ will refer to an arbitrary not necessarily finite digraph. $E^*$ is, as usual, the converse digraph of $E$ and $K$ is a commutative field. Moreover, $I$ always denotes the ideal of $KE^*$ generated by all ghost arrows and sinks of $E$. Consequently, $I$ is a finitely generated left ideal if and only if $E$ is finite. The ideal topology defined by $I$ is again denoted by $\frak T$. Then $\frak T$ is a flat Gabriel topology if and only if $E$ is finite. Moreover, $\frak T$ is Hausdorff if and only if there are no sinks, i.e., every vertex of $E$ emits at least one arrow. The aim of this section is to study special irreducible representations with a generator $m$ of $L_K(E)$ discussed in \cite{an}  in light of localizing techniques presented in the previous section, i.e., to determine $\ann_{KE^*}m$. It turns out that this comparison leads to interesting results and a better understanding of simple modules. 

Let $S$ be an arbitrary simple left $L_K(E)$-module and $m\in S$ an arbitrary non-zero element of $S$.
One can assume without loss of generality $m=vm$ for some vertex $v\in E^0$. By the Cunz-Krieger Condition (CK1) one has $am\neq 0$ for every arrow $a\in r^{-1}(v)$. Therefore the first obvious case appears when $S$ is required to satisfy $a^*m=0$ for all arrows $a\in s^{-1}(v)$. This strong additional condition seems to be very fortuitous. Namely, it can happen only when $v$ is either a sink, i.e., $s^{-1}(v)$ is empty, or $v$ is an infinite emitter, i.e. $s^{-1}(v)$ is infinite, as a consequence of the Cuntz-Krieger condition (CK2).  In the first case, $S$ is isomorphic to the minimal left ideal $L_K(E)v=(KE)v$. The second case leads to the definition of the Cohn-Jacobson simple module 
$S_v=\frac{L_K(E)v}{\sum\limits_{s(a)=v}L_K(E)a^*}$ introduced by Rangaswamy \cite{an} for an infinite emitter $v$. Excluding these cases, one can assume that there are infinite paths, i.e., Chen words $\a$ started on the left from $v$  satisfying $h_{\a}(l)^*m\neq 0$ for every $l\in \N$. This approach led to both Chen modules $S_{[\a]}$ defined by Chen words $\a$ which are denoted as $V_{[\a]}$ by Chen, and Rangaswamy modules $S^{f}_{\pi}$ defined by irreducible
polynomials $f$ with nonzero constant terms in one variable and periods $\pi$. We refer to \cite{an} for details. 

In the spirit of presenting a complete treatment, let $V$ be a vector space on the set of all Chen words, i.e., all infinite paths $\a$ on the right, together with an action of $L_K(E)$  induced by putting
$$\m^*\b= \begin{cases} t_{\b}(l) & \text{if}\qquad \m=h_{\b}(l),\\ 0& \text{if $\m$ is not a head of\, $\b$}\end{cases}\,\, $$  
for any finite path $\m$ and Chen word $\b$. Then $V$ becomes a semisimple $L_K(E)$-module which is a direct sum of simple modules $S_{[\a]}$ generated by $[\a]$ which runs over all tail-equivalent classes.

Consider first the simple Chen module  $S_{[\a]}$ defined by an irrational Chen word $\a=a_1a_2\cdots$ started on the left from $v\in E^0$. It is then quite easy to describe the left annihilator of $\a$ which is a maximal left ideal of $L_K(E)$. With this objective in mind, define the commuting idempotents $e^{\a}_l=a_1\cdots a_la^*_l\cdots a^*_1=h_{\a}(l)h^*_{\a}(l)=e_l$ simply for every $l\geq 0$. Then $e_0=v$ and $e_{l+n}e_l=e_{l+n}$ hold for any two $l, n \in \N$. Moreover one can see immediately (cf. Theorem 3.4 \cite{an}) ${\frak L}_\a=\ann_{L_K(E)}\a=L_K(E)(1-v)\oplus \bigoplus\limits_{l=0}^{\infty}L_K(E)(e_l-e_{l+1})$.
This implies that the left ideal $L^{\a}=\ann_{L_K(E)}\a \cap KE^*$ is generated by $F^*(E)\setminus \{h^*_{\a}(l)=h^*(l)\, |\, l\in \N\}$ and therefore $\{h^*(l)\,|\, l=0,1, 2, \dots\}$ is exactly the Schreier basis of $L^{\a}$ with respect to the standard basis $F^*(E)$ of $KE^*$. Consequently, $L^{\a}$ is not open in $\frak T$ because there are ghost paths of arbitrary length which do not belong to it. $L^{\a}$ is clearly not a maximal left ideal of $KE*$ because $L^{\a}_m=L^{\a}+\sum\limits_{l=m}^{\infty}Kh^*(l)$ form an infinite chain of left ideals with one-dimensional simple factors $L^{\a}_m/L^{\a}_{m+1}$.   
Put ${\bar T}_{\a}=KE^*/L^{\a}$. We claim that any non-zero submodule of ${\bar T}_{\a}$ contains almost all ${\bar L}^{\a}_m=L^{\a}_m/L^{\a}$, that is, almost all elements ${\bar h}^*(l)=h(l)+L^{\a}$. Namely, if $L$ is any left ideal of $KE^*$ containing $L^{\a}$ properly, then the set
$\{h^*(l)\,|\, l\in \N \}$ is not linearly independent $\mod L$ over $K$. Hence there is a smallest integer $n$ such that $h^*(n)\equiv \sum\limits_{i=1}^{m} k_ih^*(n_i) \mod L$ with possibly smallest $0< m\in N$ where $0\neq k_i\in K$ and $n<n_1<\dots <n_m$. Consequently, one has for all positive integer $p\in \N$
$$h^*(n+p)=a^*_{n+p}\cdots a^*_{n+1}h^*(n)\equiv a^*_{n+p}\cdots a^*_{n+1}(\sum\limits_{i=1}^{m} k_ih^*(n_i)) \mod L.$$
Since $\m^*h^*(l)\in L^{\a}$ for all $\m$ such that $h(l)\m$ is not a head of $\a${\textcolor{red}{,}} there are only two possible cases. There is either a smallest positive integer $p$ with $h^*(p+n)\in L$ or $h^*(n+p)\notin L$ for all positive integers $p$. In the first case, $L$ contains the submodule $L^{\a}_{p+n}$ for this smallest $p$ whence $L$ contains $I^{p+n}$ and $L$ is then open in 
$\frak T$.
In the second case there is some index $i\in \{1, \dots, m\}$ such that 
$h^*(n+p)$ and $a^*_{p+n}\cdots a^*_{n+1}h^*(n_i)$ are not contained in $L^{\a}$ for all positive integers $p\in \N$. Hence $t_{\a}(n)$ and $t_{\a}(n_i)$ are equal. This shows the equality $t_{\a}(n)=a_{n+1}\cdots a_{n_i}t_{\a}(n)=\d t_{\a}(n)$ whence by Proposition 2.1 \cite{an} $t_{\a}(n)$ is the rational Chen word ${\d}^{\infty}$ with $\d=a_{n+1}\cdots a_{n_i}$, a contradiction to the irrationality of $\a$. This completes a direct verification of the fact that $L^{\a}$ is maximal in the set of left ideals of $KE^*$ which are not open in $\frak T$. It is worth noting that $\frak T$ is a flat Gabriel topology if and only if $E$ is finite, and then by Theorem \ref{loc} and Corollary \ref{rightid} we already know that $L^{\a}$ is maximal in the set of all non-open left ideals of $KE^*$ and the equality $\ann_{L_K(E)}\a=L_K(E)L^{\a}$ holds. In this case it is also already known that $S_{[\a]}$ is isomorphic to $T_{\a}=L_K(E)\otimes_{KE^*}\frac {KE^*}{L^{\a}}=L_K(E)\otimes_{KE^*}{\bar T}_{\a}$. More generally, if all vertices $s(t_{\a}(l)) \, (l\geq 0)$ of $\a$ are regular, then successively applying the Cuntz-Krieger condition (CK2) implies that all elements $e_l\, (l\geq 0)$ are congruent to $\bar v=v+L^{\a}\in KE^*/L^{\a} \, \mod L_K(E)L^{\a}$, hence all $h(l)\otimes {\bar h}^*(l) (l\in \N)$ are equal to $v\otimes {\bar v} \, (l\in \N)$. Therefore the isomorphism $S_{[\a]}\cong T_{\a}$ holds also in this case. However, if some vertex of $\a$ is an infinite emitter, then elements $e_l (l\geq 0)$ are no longer congruent $\mod L_K(E)L^{\a}$ and therefore $S_{[\a]}$ is only a simple factor module of $T_{\a}$. For example, if $v_m=s(a_{m+1})$ is an infinite emitter, then $a^*_m\cdots a^*_1=h^*(m)$ does not belong to $L^{\a}_{m+1}$ whence the subfactor $((KE^*)h^*(l)+L^{\a}_{m+1})/L^{\a}_{m+1}$ of $KE^*/L^{\a}={\bar T}_{\a}$ is isomorphic to a simple Cohn-Jacobson module $S_{v_m}$ defined by $v_m$. 
In any case, the investigation of $T_{\a}$ seems to be an extravagant, delightful endeavor. Before discussing consequences,
one can summarize our results with respect to the notation described above as follows.
\begin{theorem}\label{irr} Let $\a$ be an irrational Chen word of $E$, i.e., an element of an irreducible representation $S_{[\a]}$ of $L_K(E)$ over a field $K$; $\frak T$ the ideal topology of $KE^*$ induced by the ideal $I$ generated by all ghost arrows and sinks of $E$.  Then $L^{\a}=\ann_{L_K(E)}\a \cap KE^*$ is not open but any left ideal $L$ of $KE^*$ strictly containing $L^{\a}$ is open in $\frak T$, namely, $L$ contains almost all
$h^*(l)=a^*_l\cdots a^*_1 (l\in \N)$. Moreover, $S_{[\a]}$ is only a homomorphic image of $T_{[\a]}= L_K(E)\otimes_{KE^*}\frac {KE^*}{L^{\a}}$ and an isomorphism appears when all vertices of $\a$ are regular. 
\end{theorem}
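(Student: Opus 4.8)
The plan is to extract all three assertions from an explicit computation of the left annihilator of $\a$ and its intersection with $KE^*$. First I would record the commuting idempotents $e_l=h_{\a}(l)h_{\a}^*(l)$ and use them, via Theorem 3.4 of \cite{an}, to write out $\ann_{L_K(E)}\a$; intersecting with $KE^*$ then shows that $L^{\a}$ has, relative to the standard basis $F^*(E)$, the Schreier basis $F^*(E)\setminus\{h^*(l)\mid l\in\N\}$. From this description non-openness is immediate: $L^{\a}$ omits the ghost heads $h^*(l)$, which have arbitrarily large length, so $L^{\a}$ can contain no power $I^l$ and hence is not open in $\frak T$.

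The heart of the argument is the maximality of $L^{\a}$ among non-open left ideals. Given a left ideal $L\supsetneq L^{\a}$, the classes $\{h^*(l)+L\}$ must be $K$-linearly dependent, so I would take the least index $n$ admitting a relation $h^*(n)\equiv\sum_{i=1}^m k_ih^*(n_i)\pmod L$ with $n<n_1<\cdots<n_m$ and $m$ minimal. Left-multiplying by the ghost tails $a_{n+p}^*\cdots a_{n+1}^*$ propagates this relation to every $h^*(n+p)$. The key structural fact is that $\m^*h^*(l)\in L^{\a}$ whenever $h(l)\m$ is not a head of $\a$, which forces a dichotomy: either some $h^*(n+p)$ lands in $L$, in which case $L\supseteq L^{\a}_{n+p}\supseteq I^{n+p}$ and $L$ is open, or no $h^*(n+p)$ does, which compels the tails $t_{\a}(n)$ and $t_{\a}(n_i)$ to coincide for some $i$. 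The latter yields $t_{\a}(n)=\d\,t_{\a}(n)$ with $\d=a_{n+1}\cdots a_{n_i}$, so by Proposition 2.1 of \cite{an} the tail $t_{\a}(n)$ is rational, contradicting the irrationality of $\a$. I expect this dichotomy, and in particular ruling out the second branch, to be the main obstacle, since it is exactly where the irrationality hypothesis is consumed and where one must track carefully which ghost paths survive modulo $L^{\a}$.

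For the comparison with $S_{[\a]}$, I would use that there is always a canonical surjection $T_{[\a]}=L_K(E)\otimes_{KE^*}(KE^*/L^{\a})\twoheadrightarrow S_{[\a]}$, since $\a$ generates $S_{[\a]}$ and $L^{\a}$ annihilates it. To upgrade this to an isomorphism when every vertex $s(t_{\a}(l))$ is regular, I would apply the Cuntz--Krieger relation (CK2) successively: regularity lets each $e_l$ be rewritten as a finite sum $\sum_{a}aa^*$ over outgoing arrows, forcing all $e_l$ to become congruent to $\bar v=v+L^{\a}$ modulo $L_K(E)L^{\a}$, so the apparent extra generators $h(l)\otimes\bar h^*(l)$ all collapse onto $v\otimes\bar v$ and the surjection is injective. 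Conversely, I would note that an infinite emitter among the vertices of $\a$ breaks this collapse, since the relevant $e_l$ are then inequivalent modulo $L_K(E)L^{\a}$, so in general $S_{[\a]}$ is only a proper simple quotient of $T_{[\a]}$, completing the statement.
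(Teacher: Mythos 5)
Your proposal is correct and follows essentially the same route as the paper: the same description of $\ann_{L_K(E)}\a$ via the idempotents $e_l$ and Theorem 3.4 of \cite{an}, the same Schreier-basis picture of $L^{\a}$, the same dichotomy argument for maximality (propagating a minimal dependence among the $h^*(l)$ by ghost tails and invoking Proposition 2.1 of \cite{an} to rule out a rational tail), and the same (CK2)-collapse argument comparing $T_{[\a]}$ with $S_{[\a]}$. One terminological slip: in the paper's convention the Schreier basis of $L^{\a}$ is the complement-spanning set $\{h^*(l)\,|\,l\in\N\}$, while $F^*(E)\setminus\{h^*(l)\,|\,l\in\N\}$ is the generating set of $L^{\a}$.
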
 
\begin{remark}\label{infilength}The most obvious example for the last claim of Theorem \ref{irr} is provided by the \emph{rose $R_{\infty}$ with infinitely many petals} having one vertex $v$ and infinitely many loops $a_n\, (n\in \N)$. Then $\a=a_1a_2\cdots a_n\cdots $ is an irrational Chen word, and with the notation described before Theorem \ref{irr} one obtains a descending chain of left ideals
${\frak L}_n\, (n\in \N)$ of $L_K(R_{\infty})$ generated by all ghost arrows except $a^*_1, a^*_2a^*_1, \dots, a^*_{n-1}\cdots a^*_1$ where $n$ runs over all non-negative integers. Therefore if we denote by ${\frak L}_{\infty}$ the left ideal generated by all ghost arrows except all $h^*(l)\, (l\geq 0)$, then ${\frak L}_\a={\frak L}_{\infty}$ and a submodule of ${\frak L}_n/{\frak L}_{n+1}$ generated by $a^*_{n-1}a^*_{n-1}\cdots a^*_1+{\frak L}_{n+1}$ is isomorphic to the Cohn-Jacobson module $S_v$ defined by $v$ for each $n\in \N$. Therefore $T_{[\a]}$ contains an infinite chain of submodules such that the consequent subfactors are simple and isomorphic to $S_v$. Hence $T_{[\a]}$ has at least in this case two nonisomorphic simple factors $S_v$ and $S_{[\a]}$.
\end{remark} 
If $\frak I$ is any automorphism $KE^*$ fixing $I$, i.e., ${\frak I}(I)=I$, then $\frak I$ invariantly preserves the topology $\frak T$. Therefore we see the advantage in the case of finite digraphs when $L_K(E)$ is defined as the ring of quotients of $KE^*$ by
$\frak T$. Consequently, if we put $b^*={\frak I}(a^*)$ for each ghost arrow $a^*$, and let $b$ be the corresponding left $KE^*$-module homomorphism from $I$ to $KE^*$, then we obtain another involution of $L_K(E)$ and also another representation of elements of $L_K(E)$ as linear combinations $\sum k_i\m_i\n^*_i$ where $\m_i, \n_i$ are paths in the "real" arrows $b$. It is important to emphasize that $\n^*_i$ are elements of $KE^*$ but the $\m_i$ are no longer elements of $KE$ although they are elements of $L_K(E)$. In the most important case of the classical Leavitt algebra $L_K(1, n)$ the situation is more transparent. Namely, using the notation of Section \ref{c1}, let $z_1, \cdots, z_n$ be another basis of the left module $I=\sum K\langle y_1, \dots, y_n\rangle y_i$ over $K\langle y_1, \dots, y_n\rangle$. It is well-known that $I$ is a free non-unital associative algebra inside the free unital algera $K\langle y_1, \dots, y_n\rangle$. The equality $\sum K\langle y_1, \dots, y_n\rangle z_i=\oplus K\langle y_1, \dots, y_n\rangle z_i=I$ implies 
that $K\langle y_1, \dots, y_n\rangle$ is also a free algebra on the set $\{z_1, \dots, z_n\}$, i.e., $K\langle y_1, \dots, y_n\rangle=K\langle z_1, \dots, z_n\rangle$. Then a map $\frak I$ sending $y_i$ to $z_i$, respectively, is an automorphism of $K\langle y_1, \dots, y_n\rangle$ fixing $I$, and  define $s_i\colon I\rightarrow K\langle y_1, \dots, y_n\rangle$ sending $z_i$ to 1 and all the other $z_j, j\neq i$ to 0. By letting $z_i=s^*_i$ for each $i$, one obtains another involution of $L_K(1,n)$ with elements written as linear combinations $\sum k_{i...j...}s_{i_1}\cdots s_{i_l}z_{j_1}\cdots z_{j_m}$. Furthermore $\frak I$ extends obviously to an automorphism of $L_K(E)$. 
One can now form paths and Chen words in terms of new arrows $b$ and therefore one gets simple modules given by
irrational tail-equivalent classes $[\b]$ which we denote by $S^{\frak I}_{[\b]}$ and again call them Chen modules. One can get $S^{\frak I}_{[\b]}$ from $S_{[\a]}$ as follows. If we twist a module structure of $S_{[\a]}$ by putting $r\star m={\frak I}(r)m$ for any $r\in L_K(E)$ and $m\in S_{[\a]}$ then we obtain a new module, denoted as $_{\frak I}S_{[\a]}$. Moreover, the map $y_i\mapsto z_i \, (i=1, 2, \dots, n)$ also induces a bijection from irrational Chen words in the $x_i$ to ones in the $s_i (s^*_i=z_i)$, respectively. It is noteworthy to emphasize that the $s_i$ are not necessarily elements of $KE$. If we denote this map also by $\frak I$, and $\b$ is the corresponding image of $\a$, then $\frak I$ is clearly an isomomorphism between $S_{[\a]}$ and $_{\frak I} S^{\frak I}_{[\b]}$. It is well-known and easy to verify that for an automorphism $\phi$ of a ring $A$ together with a left $A$-module $M$, the twisted $A$-module $_{\phi}M$ by putting $r\star m=\phi(r)m \, (r\in A; m\in M)$ is isomorphic to $_AM$ if $\phi$ is an inner automorphism. However, this does not mean that they are not isomorphic if $\phi$ is not an inner automorphism. For example, if $\frak I$ is an automorphism of $L_K(1, 4)$ by fixing $y_1, y_2$ and interchanging $y_3$ with $y_4$. Let $\a$ be an arbitrary irrational Chen word in $y_1$ and $y_2$ then ${\frak I}(\a)=\a$ and one has $S_{[\a]}\cong {_{\frak I}S^{\frak I}_{[\a]}}$ because the intersection of the annihilators of both $\a \in S_{[\a]}$ and $\a=\b \in {_{\frak I}S^{\frak I}_{[\b]}}$ with $K\langle y_1, y_2, y_3, y_4\rangle$ are equal according to their description before Theorem \ref{irr}. Therefore it is interesting to search for reasonable conditions on the automorphisms $\frak I$ fixing $I$ such that the induced simple modules $_{\frak I}S_{[\a]}$ associated to irrational Chen words are not isomorphic to any old modules.

Now we turn to examine rational Chen words.
 Let $\a=\d^{\infty}=\d \d \cdots \, (\d=a_1a_2\cdots a_n$; $s(\a)=v)$ be a rational Chen word in $V$ defined by a period $\d$. Then $\ann_{L_K(E)} \a$ contains a left
ideal $L$ generated by vertices $\neq v=(\d^*)^0$; all ghost arrows $\neq a^*_1$, all ghost paths $\neq a^*_2a^*_1$ of length 2... all ghost paths $\neq a^*_{n-1}\cdots a^*_1$  of length $n-1=|\d|-1$, all paths $\neq  \d^*$ of length $|\d|$; all ghost paths $\neq a^*_1 \d^*$ of length $|\d|+1$ and so on. More precisely, $L$ is generated by all ghost paths $\neq a^*_m\cdots a^*_1(\d^*)^l$ of length $l|\d|+m (0\neq m <n-1)$ running over all non-negative integers. $L$ is obviously not open in $\frak T$ because it contains ghost paths of arbitrary length. Furthermore $L$ is also not a maximal left ideal of $KE^*$. This fact becomes transparent when we consider the Leavitt algebra $L_K(1, 2)$ and the rational Chen word $x^{\infty}_1$ where $L$ is exactly the two-sided ideal $\langle y_2\rangle\vartriangleleft K\langle y_1, y_2\rangle \, (y_i=x^*_i)$. It is fortuitous that the left ideal $L^q_{\d}$ generated by $L$ and a relation $q(\d^*)$ for any irreducible polynomial $q$ with nonzero constant term in one variable is a maximal left ideal of $KE^*$. Namely, consider an arbitrary element $r\in KE^*\setminus L^q_{\d}$. By the irreducibility of $q$ one can assume without loss of generality $r\notin K[\d^*]$.  Consequently, there are nonzero polynomials $q_1, \dots, q_l (l\leq n)$ of degrees $< \deg q$ and proper heads $h_1, \dots , h_l$ of $\d$ with $0\leq |h_1|<|h_2|<\dots<|h_l|<n$ with 
\begin{align}\label{eq1}
r\equiv \sum\limits_{i=1}^{l} h^*_iq_i(\d^*) \quad \mod L^q_{\d}.
\end{align}
As in the case of irrational Chen words one can multiply both side of \eqref{eq1} with heads of $t_{\d^{\infty}}$ on the left, and obtain a nonzero element of $K[\d^*] \mod K[\d^*]q(\d^*)$ whence $KE^*r+L^q_{\d}=KE^*$, or there is some index $j_m\neq j_1$ with $h_1\d^{\infty}=h_2\d^{\infty}$. Therefore $\d$ is not a period by Proposition 2.1 \cite{an} which is impossible. Therefore $L^q_{\d}$ is a maximal left ideal of $KE^*$ as claimed.  
 Moreover, it is clear that for a simple Rangaswamy module $S^q_{\d}=L_K(E)/L_K(E)q^*(\d)$ where $q^*$ is the reciprocal polynomial of $q$, the left ideal $KE^*\cap \ann_{L_K(E)}{\bar v}\, ({\bar v}=v+L_K(E)q^*(\d))$ is exactly
$L^q_{\d}$. Furthermore applying Cunz-Krieger Condition (CK2) successively at vertices of $\d$ one can see that the left ideal $\ann_{L_K(E)}{\bar v}$ can be generated by $L^q_{\d}$ if all vertices of $\d$ are regular. If some vertex of $\d$, say $v$,  is an infinite emitter, then $q^*(\d)$ does not belong to $L_K(E)L^q_{\d}$ whence  $L_K(E)L^q_{\d}$ is a proper subideal of $\ann_{L_K(E)}{\bar v}$. Thus one can summarize the above discussion as
\begin{theorem}\label{ranga} Let $\d=a_1\cdots \a_n (n\geq 1; s(\d)=v)$ be a period of $E$; $q$ a monic irreducible polynomial of degree $l$ in one variable with non-zero constant term. Let $L^q_\d$ be the left ideal of $KE^*$ generated by $q(\d^*)$ and all ghost paths $\neq a^*_m\cdots a^*_1(\d^*)^t$ of length $tn+m \, (1\leq m<n)$ running over all non-negative integers $t$. Then $L^q_\d$ is a maximal left ideal of $KE^*$ which is not open in the topology $\frak T$. If all vertices of $\d$ are regular, then $L_K(E)L^q_\d=L_K(E)q^*(\d)$ and $S^q_\d=\frac{L_K(E)}{L_K(E)q^*(\d)}\cong L_K(E)\otimes_{KE^*} \frac{KE^*}{L^q_\d} = T^q_\d$. If a number $m$ of infinite emitters in $\d$ is positive, then $S^q_\d$ is a proper image of $T^q_\d$. Moreoer $T^q_\d$ has length $ml+1$ with simple subfactors isomorphic to the Cohn-Jacobson modules $S_u$ of multiplicity $l$ where $u$ runs over $m$ infinite emitters of $\d$. 
\end{theorem}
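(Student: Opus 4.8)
The plan is to study the cyclic module $\bar T=KE^*/L^q_\d$ completely and then transport the result to $L_K(E)$ via the exact functor $L_K(E)\otimes_{KE^*}-$, just as was done for irrational words before Theorem \ref{irr}. First I fix the $K$-structure of $\bar T$. Since the generators of $L^q_\d$ are all ghost paths \emph{except} the duals $h^*(tn+m)=a^*_m\cdots a^*_1(\d^*)^t=h^*_m(\d^*)^t$ of the heads of $\d^{\infty}$ (where $h_m=a_1\cdots a_m$), a ghost path survives in $\bar T$ only if it is such a head dual, so every element has the form $\sum_{m=0}^{n-1} h^*_m\,p_m(\d^*)$ with $p_m\in K[\d^*]$. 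Imposing the relation $q(\d^*)$ identifies the $K[\d^*]$-component with the field $F=K[\d^*]/(q(\d^*))$, a degree-$l$ extension of $K$, so $\bar T=\bigoplus_{m=0}^{n-1} h^*_m F$ is $nl$-dimensional. Because $q$ has nonzero constant term, $\d^*$ is a unit of $F$, so every $(\d^*)^t$ is nonzero in $\bar T$; this exhibits ghost paths of unbounded length outside $L^q_\d$ and hence shows at once that no $I^t$ lies in $L^q_\d$, i.e. $L^q_\d$ \emph{is not open}.

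For \emph{maximality} I show $\bar T$ is simple. The decomposition $\bigoplus_m h^*_m F$ is graded by length modulo $n$, since $h^*_m(\d^*)^t=h^*(tn+m)$ has length $tn+m$; here $\d$ being a period is what guarantees (via Proposition 2.1 \cite{an}) that the heads $h_0,\dots,h_{n-1}$ are genuinely distinct, so no collision $h_i\d^{\infty}=h_j\d^{\infty}$ occurs and the sum is direct. Given a nonzero $\bar r=\sum_m h^*_m p_m(\d^*)$, pick $m_0$ with $p_{m_0}\neq 0$ and multiply on the left by $\eta^*$, where $\eta=a_{m_0+1}\cdots a_n$ is the initial block of length $n-m_0$ of the rotated word $t_{\d^{\infty}}(m_0)$. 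Left multiplication by $\eta^*$ raises length by $n-m_0$, so by the grading only the $m_0$-block can reach the pure component $F$; there it produces $\d^*p_{m_0}(\d^*)$, which is nonzero as $\d^*$ is a unit. Having landed a nonzero element of the field $F$, invertibility in $F$ lets me scale it to $v=\bar 1$, and from $v$ the remaining blocks $h^*_m F$ are recovered by further left multiplications. Hence $KE^*\bar r=\bar T$, so $L^q_\d$ is maximal among left ideals of $KE^*$, and by the previous paragraph it is maximal and not open.

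Next I pass to $L_K(E)$, where $L_K(E)\otimes_{KE^*}\bar T=T^q_\d=L_K(E)/L_K(E)L^q_\d$ and $S^q_\d=L_K(E)/L_K(E)q^*(\d)$; a direct check gives $KE^*\cap\ann_{L_K(E)}\bar v=L^q_\d$ for $\bar v=v+L_K(E)q^*(\d)$. \emph{If every vertex of $\d$ is regular}, then (CK1) gives $\d^*\d=v$, whence the identity $q(\d^*)=(\d^*)^l q^*(\d)$ holds; applying (CK2) successively along $a_1,\dots,a_n$ then absorbs each non-head generator of $L^q_\d$ into $L_K(E)q^*(\d)$ and matches the two defining relations, so $L_K(E)L^q_\d=L_K(E)q^*(\d)=\ann_{L_K(E)}\bar v$ and therefore $S^q_\d\cong T^q_\d$.

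Finally, the \emph{infinite-emitter} case. At an infinite emitter $u=s(a_j)$ on $\d$ the relation (CK2) is unavailable, so the absorption fails and one checks $q^*(\d)\notin L_K(E)L^q_\d$; thus $L_K(E)L^q_\d\subsetneq L_K(E)q^*(\d)$ and $T^q_\d$ surjects \emph{properly} onto $S^q_\d$. To evaluate the kernel I build a composition series of $T^q_\d$ with top factor $S^q_\d$: each head dual that (CK2) would have collapsed but cannot, because it passes through $u$, generates a subfactor isomorphic to the Cohn--Jacobson module $S_u$, exactly as the subfactor $((KE^*)h^*(l)+L^{\a}_{m+1})/L^{\a}_{m+1}\cong S_{v_m}$ appeared in the irrational discussion and in Remark \ref{infilength}. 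Letting $t$ run over the $l$ relevant powers of $\d^*$ for each of the $m$ infinite emitters yields $ml$ such factors, so the length of $T^q_\d$ is $ml+1$ with $S_u$ of multiplicity $l$. The main obstacle is twofold: in the maximality step, keeping the left-multiplication projection collision-free, which is precisely where periodicity (Proposition 2.1 \cite{an}) is indispensable; and in the last step, the bookkeeping that there are \emph{exactly} $l$ copies of $S_u$ per infinite emitter and no extraneous factors, so that the length equals $ml+1$ rather than merely being bounded by it.
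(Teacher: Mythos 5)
Your proof of the first assertion is sound and follows essentially the same route as the paper's own (pre-theorem) argument: reduce any $r\notin L^q_\d$ to a combination of head duals, project onto the $K[\d^*]/(q(\d^*))$-component by left multiplication, and use Proposition 2.1 of \cite{an} to exclude collisions. Your explicit description $\bar T=\bigoplus_{m=0}^{n-1}h^*_mF$ with its length-mod-$n$ grading is a clean packaging, though two details need repair: the directness of the sum comes from the Schreier-basis property (head duals span a complement of the left ideal generated by the remaining ghost paths), not from distinctness of the heads $h_m$; and after multiplying by $\eta^*$ your element is in general \emph{not} a pure element of $F$ --- cross terms $\eta^*h^*_{m'}(\d^*)^t$ can survive as head duals in blocks $\neq 0$ (take $\d=xyxyy$, $m_0=4$, $m'=1$: then $h_1\eta=xy$ is a head of $\d^{\infty}$), so before "scaling to $v$" you must first kill those blocks, e.g. by one further left multiplication by $\d^*$, which sends every block $m''\neq 0$ into $L^q_\d$ because $\d^sh_{m''}\d$ is never a head of $\d^{\infty}$, again by Proposition 2.1 \cite{an}. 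Both repairs stay inside your framework, and your treatment of the regular case (CK2 absorption) is at the same level of detail as the paper's.

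The genuine gap is the infinite-emitter case, which is precisely what the paper's proof environment is devoted to. You assert without argument that $q^*(\d)\notin L_K(E)L^q_\d$ --- a non-membership in a left ideal of $L_K(E)$, which cannot be checked by inspection; the paper certifies it by constructing an explicit representation space (Remark \ref{rep}) on which the image of $q^*(\d)$ is nonzero. More seriously, the length count is only announced, not proved: you say that heads which CK2 "would have collapsed" generate $S_u$-subfactors and that letting $t$ run over $l$ powers per infinite emitter "yields $ml$ such factors," and you then flag the exactness of this count as the main unresolved obstacle --- but that exact count \emph{is} the theorem. The paper's device, which your proposal is missing, is a concrete filtration: put $z_i=(\d^*)^iq^*(\d)$ for $i=0,\dots,l$, so that $z_0=q^*(\d)$ and $z_l=q(\d^*)\equiv 0 \bmod L^q_\d$ by (CK1) alone, and set $M_i=L_K(E)z_i+L_K(E)L^q_\d$; the top quotient $T^q_\d/M_0$ is $S^q_\d$, and each layer $M_i/M_{i+1}$ is analyzed through the elements $h^*_{ij}=a^*_j\cdots a^*_1z_i$: at a regular vertex $s(a_{j+1})$ one has $a_{j+1}h^*_{i,j+1}=h^*_{ij}$, so consecutive generators coincide in the layer, while at each infinite emitter this identification fails and contributes exactly one Cohn--Jacobson factor $S_u$. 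Hence each of the $l$ layers has length $m$, the total length is $ml+1$, and each $S_u$ occurs with multiplicity $l$. Without this filtration (or an equivalent device) your proposal establishes maximality, non-openness, and the regular case, but not the stated structure of $T^q_\d$.
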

\begin{proof} In view of well-known results on Rangaswamy modules (see e.g. \cite{an} Section 4) it is enough to complete the proof by checking the case when $\d$ contains infinite emitters. First, the right exactness of the tensor product together with the identification of $L_K(E)\otimes_{KE^*}KE^*$ with $L_K(E)$ implies $T^q_\d\cong L_K(E)/L_K(E)L^q_\d$. Put $z_i=(\d^*)^iq^*(\d)$ for $i=0, \cdots, l$, so $z_0=q^*(\d); z_l\equiv 0 \mod L^q_\d$ hold, and let $M_i=L_K(E)z_i+L^q_\d$ for $i=0, \dots, \deg q$. 
Furthermore let $h^*_{ij}=a^*_j\cdots a^*_1z_i$ for $j=1, \dots, n$ whence $h^*_{in}=z_{i+1}$ for $i=0, \dots, l-1$. The obvious equalities $a^*_{j+1}\cdots a^*_1\d^{i+1}=a_{j+2}\cdots a_n\d^i$ for all $0\leq j\leq n-1;\, i\geq 0$ with $a_{j+2}=v$ in the case $j=n-1$; and $x^*a^*_j\cdots a^*_1\in L^q_\d$ for all $a_{j+1}\neq x\in s^{-1}(a_{j+1})$ imply $a_{j+1}h^*_{i, j+1}=h^*_{ij}$ if $s(a_{j+1})$ is a regular vertex. Therefore the images of $h^*_{ij}$ and $h^*_{i, j+1}$ generate the same submodule of $M_i/M_{i+1}$ if $s^{-1}(a_{j+1})$ is a regular vertex where $0\leq j\leq n-1$ and 
$0\leq i\leq l-1$. These results together with the congruences $x^*h_{ij}\equiv 0 \mod L_K(E)h_{i,j-1}$ for all arrows $a_{j+1}\neq x\in s^{-1}(a_{j+1})$, show that $M_i/M_{i+1}\, (i=0, \dots, l-1)$ is a module of length $m$ with composition factors isomorphic to a Cohn-Jacobson module $S_u$ where $u$ runs over infinite emitters of $\d$ and so the proof is complete.
\end{proof}
\begin{remark}\label{rep} The discussion before Theorem \ref{ranga} shows the equality $L_K(E)L^q_\d\cap KE=
(KE)q^*(\d)$, or equivalently, $\d \d^*=v$, when all vertices of $\d$ are regular.  However, $q^*(\d)\notin L_K(E)L^q_\d$ if some vertex of $\d$ is an infinite emitter. 
For a better understanding of Theorem \ref{ranga} we provide another construction of $T^q_\d$ when $\d$ contains at least one infinite emitter, revealing the beauty of representation theory. Simplifying notation{\textcolor{red}{,}} let $v_i=s(t_{\d}(i))$ for $i=0, \dots, n-1$ and $h(i)=h_{\d^l}(i)$ for each $i=0, 1, \dots, n^l$, hence $h(0)=v=v_0, h(1)=a_1, h(n^l)=\d^l$. By the description of $S^q_\d$ up to isomorphism (cf. \cite{an}) one can assume without loss of generality in this case that $\d$ starts from an infinite emitter $v$. Furthermore let $B^q_\d$ be the set of all paths $\m h^*(i)$ in the extended digraph $\hat E$ of $E$ such that $i$ runs over $0, 1, \dots, n^l-1$ and $\m$ runs over paths in $E$, and
$\m h^*(i)$ cannot contain the subpath $a_ia^*_i$ when $v_{i-1}$ is an infinite emitter of $\d$. Let $M$ be the $K$-vector space spanned by the set $B^q_\d$. Then $M$ becomes a left $L_K(E)$-module by mapping $L_K(E)$ homomorphically in the endomorphism ring 
$\End (_KM)$. To reach this goal it is sufficient to assign to each generator of $L_K(E)$, i.e., to each vertex $u$ and each arrow $a$ linear transformations $\ta_u$ and $\ta_a, \ta_{a^*}$, respectively, of $_KM$ such that these linear transformations satisfy relations required in Definition \ref{q2}. However, it is clear how to define these transformations because it is enough to define their values on $B^q_\d$. 
$\ta_u\, (u\in E^0)$ and $\ta_a\, (a\in E^1)$ when $a$ is either $\neq a_i$ or $a=a_i$ and $v_{i-1}=s(a_i)$ an infinite emitter,  is simply a left multipliction with $u$ and $a$, respectively, when it is well-defined, else it is 0.  
For $a=a_i$, and $v_{i-1}=s(a_i)$ a regular vertex, $\ta_{a_i}$ is also a left multiplication by $a_i$ on $\m h^*(j)$ in case $|\m|\geq 1$ but $\ta_{a_i}(h^*(j))=h^*(j-1)$ for $a_j= a_i$ and 0 if $a_j\neq a_i$.  $\ta_{a^*}$
is also a left multiplication, i.e., erasing the arrow $a$ when it is meaningful, i.e., 
$\ta_{a^*}(\m h^*(i))=(a^*\m)h^*(i)$ if $|\m|\geq 1$ but 
$$\ta_{a^*}(h^*(i))=\begin{cases} 0 & \text{if $h(i)a\neq h(i+1)$ and $1\leq i+1\leq n^l$},\\ h^*(i+1) & \text{if $h(i)a=h(i+1)$ and $1\leq i+1\leq n^l-1$},\\ (\d^*)^l-q(\d^*)  & \text{if $i=n^l-1$ and $a=a_n$}. \end{cases}\,\,$$
It is a routine task to verify that $\ta$ is a well-defined $K$-algebra homomorphism from  $L_K(E)$ into $\End(_KM)$ making $M$ a left $L_K(E)$-module isomorphic to $T^q_\d$. Moreover, the image of $q$, i.e., $\t_q$ is not zero, that is, $q^*(\d)\notin L_K(E)q(\d^*)$ holds. A similar circumstance appears when the Cuntz-Krieger condition (CK2) is ignored, i.e., when we consider the Cohn path algebra $C_K(E)$ instead of the Leavitt path algera $L_K(E)$. For the sake of clarity consider the Jacoson algebra $K\langle x, y\rangle$ subject to $yx=1$, i.e. the Cohn path algebra of the \emph{rose $R_1$ with 1 pedal} graph having one vertex 1 and one loop $x$ with $y=x^*$. Then the vector space $V$ generated by the basis $B=\{1, x, x^2, \cdots \}$ can be considered in the obvious manner as a representation of $C_K(R_1)$ by assigning to $1$ the identical linear transformation on $V$, to $x$ the left multiplication on $B$ and to $y=x^*$ a linear transformation $v\mapsto kv,\, x^{l+1}\mapsto x^l\, (l\geq 0)$ where $k$ is an arbitrary, not necessarily nonzero scalar from $K$. Moreover, all such representations of the Jacobson algebra are isomorphic one to the other and also to a minimal one-sided, here, left ideal. Such a representation is, however,  not one of $L_K(R_1)$ because the Cuntz-Krieger condition (CK2) fails in the latter case! This is why we call the representation $S_v$ defined by an infinite emitter $v$ a Cohn-Jacobson representation of $L_K(E)$ in \cite{an}. The picture becomes complete by giving a representation space $_KM$ of $S^q_\d$ when all vertices of $\d$ are regular.  For the sake of clarity after a normalization if it is necessary, we write $q^*(x)=kx^l-q_1(x)=1-xp(x);\, p(x), q_1(x)\in K[x]; 0\neq k\in K$ and $t_0=v=t_{\d^l}(n^l-0), t_1=a_n=t_{\d^l}(n^l-1),\dots, t_i=t_{\d^l}(d^l-i)\, (i=0, 1, \dots, n^l-1)$. Let $B^{q^*}_\d$ be the set of all  paths $\m t_i$  where $i\in \{0, \dots, n^l-1\}$ and $\m\in F(E)$ such that the composition path $\m t_i$ is well defined and the tail $t_{\m}(|\m|-1)t_i\neq t_{i+1}$, and $M$ the $K$-vector space generated by the basis $B^{q^*}_\d$. Therefore
$\m\in F(E)v$ belongs to $B^{q^*}_\d$ if and only if $t_{\m}(|\m|-1)\neq a_n$ or equivalently, $\m$ cannot be written in the form $\m=\n a_n (\n\in F(E))$. In other words, $B^{q^*}_\d$ is exactly a Schreier basis of the left ideal $KEq^*(\d)$ of $KE$ with respect to its standard basis $F(E)$. Every vertex $u\in E^0$ defines the linear transformation $\ta_u$ on $_KM$ by left multiplication, i.e. by concatenation on the left side with $u$ with paths from $B^{q^*}_\d$. Each arrow $a\in E^1$ defines two linear transformations $\ta_a, \ta_{a^*}$ on $_KM$ as follows 
$$\ta_a(\m t_i)=\begin{cases} 0 & \text{if 
$r(a)\neq s(\m t_i)$},\\ a\m t_i & \text{if $r(a)=s(\m t_i)$ and $a\m t_i\neq \d^l$},\\
k^{-1}q_i(\d) & \text{if $|\m|=0; i=n^l-1$ and $a=a_1$}, \end{cases}\,\,$$
and
$$\ta_{a^*}(\m t_i)=\begin{cases} 0 &  \text{if $a\neq h_{\m t_i}(1)$ or $s(a)=v\,\, \&\,\, \m t_i=v$},\\
t_{\m t_1}(1) & \text{if $a=h_{\m t_i}(1)$},\\ a_2\cdots a_np(\d)
 & \text{if $\m t_i=v$, i.e., $\m=v; i=0$}. \end{cases}\,\,$$
It is a routine but useful to check that the assignment $\ta$ induces an algebra homomorphism from $L_K(E)$ into $\End(_KM)$ making $M$ a representation space of of $L_K(E)$ isomorphic to $S^q_\d$.
\end{remark}  
Since $L^q_{\d}\cap K[\d^*]$ is a maximal ideal of $K[\d^*]$ which is, in general, not a factor of the subalgebra $v(KE^*)v$ one obtains immediately that Rangaswamy modules do not, in general, coincide with
Hilbert modules. Moreover, one can clearly see difficulties arising when infinite emitters are part of the picture.  $KE^*$ admits both maximal left ideals which are not open in $\frak T$ and not maximal left ideals which are maximal in the set of not open left ideals such that these left ideals generate maximal left ideals of $L_K(E)$ when $E$ is a finite digraph but not necessarily ones when infinite emitters are present. Furthermore a careful examination of the discussion above and Schreier technique lead to a method for constructing new classes of not necessarily irreducible representations of $L_K(E)$. 
\vskip 0.3cm
First, we fix some notation. For a vertex $v\in E$, we simply write ${\frak c}_v=\frak c $ for a set of closed paths based at $v$. Since a structure of closed paths is very complicated,  we assume that elements in ${\frak c}$ are periods. According to a standard convention, a subalgebra of either $KE$ or $KE^*$ generated by $\frak c$ or ${\frak c}^*$ is denoted by $K\frak c$ or $K\frak c^*$, respectively. Consequently, one can assume without loss of generality that no path of $\frak c$ can be written as a product of the other ones from $\frak c$. Moreover,
 the $*$-subalgebra of $L_K(E)$ generated by $\frak c$ is denoted by  $J(\frak c)$. If $\frak c$ consists of a single period $\d$, then $J(\frak c)$ is denoted simply by $J(\d)$ which is called a \emph{Jacobson} or a \emph{Toeplitz subalgebra} defined by a period $\d$ in \cite{an}. We are now in position to describe a method for constructing irreducible representations of $L_K(E)$ together possibly with their endomorphism rings.
Inspired by both the argument on Rangaswamy modules and Remarks \ref{hilbert0}(2), one can assign to $\frak c$ a left ideal $L$ of $KE^*$ as follows. Let $B_{\frak c}$ be the set of all paths of the form $\m^* \n^*$ where $\m$ runs over all proper heads of some $\d \in \frak c$ and $\n$ is an arbitrary element in the semigroup generated by $\frak c$. Then $B_{\frak c}$ is  exactly a Schreier basis of the left ideal $L$  generated by $F^*(E)\setminus B_{\frak c}$ with respect to the basis $F^*(E)$ of $KE^*$ \cite{ajm}. Recall that a Schreier basis of a left ideal $L$ with respect to the basis $F^*(E)$ of $KE^*$  is by definition \cite{ajm} a subset  $B$ of $F^*(E)$ closed under taking tails such that $B$ generates a $K$-subspace of $KE^*$ directly complementing $L$. However, this left ideal is not maximal, or more generally, maximal in the set of not open left ideals with respect to the topology $\frak T$. However, fortunately enough, one can extend $L$, by adding an appropriate left ideal $G$ of $K\frak c^*$ to a left ideal $L_{\frak c}=L+(KE^*)G$ of $KE^*$. 
Since one can drop out periods which are contained in $G$ one can assume without loss of generality that $G$ is disjoint from the semiroup generated by $\frak c$. 
It turns out that $L_{\frak c}$ is usually maximal in the set of left ideal of $KE^*$ which are not open in the topology $\frak T$. Hence $T_{\frak c}=L_K(E)\otimes_{KE^*} KE^*/L_{\frak c}$ is an irreducible representation of $L_K(E)$ in certain cases. We shall call irreducible representations of $L_K(E)$ gotten in this way \emph{Mantese-Rangaswamy simple modules} acknowledging the idea of both Mantese (in her private communication) and Rangaswamy. A most obvious example employing this method is found when we consider left ideals of $K\frak c^*$ defined by irrational Chen words in periods from $\frak c$ instead of arrows. Furthermore one can take $G$ a maximal left ideal of $K\frak c^*$ such that the largest ideal $P$ contained in $G$ is maximal or even primitive ideal with the "nice" factor algebra $K\frak c^*/P$. In particular, one of the most interesting cases is that when $G$ itself is a maximal two-sided ideal and $K\frak c^*/G$ is a division ring. Moreover, one can take into account the fact that $K\frak c^*$ is usually
a free associative algebra whence one can apply Lewin-Schreier technique to obtain certain sets of generators for $G$ as both a two-sided or a left ideal, respectively. This allows the use of the well-developed methods, ideas and results of finite dimensional quiver algebras possibly with relations in the study of the module theory of Leavitt path algebras. This is extremely important when $v$ is an infinite emitter. 
Since  $L_K(E)L_{\frak c}$ is generally not a maximal left ideal, one needs a maximal left ideal containing it. It seems that one can get such a left ideal when we replace relations defining $G$ by elements from $KE$. It is worth to note that the above described process reduces to a construction of Hilbert modules, for example, when $\frak c$ consists of loops at $v$. However, if $\frak c$ contains a period of length $\geq 2$, then one obtains definitely a Mantese-Rangaswamy module which is not a Hilbert module!
\vskip 0.3cm
One can summarize the method described above more generally as follows. First, in view of flat localizations, one can pick up a vertex $v$ such that $vKE^*v$ contains free subalgebras and then consider a maximal left ideal $L$ of $vKE^*v$ and its extension $L_K(E)L$. There are two possibilities. The first case is the lucky one when $L_K(E)L$ is maximal and then it induces an irreducible representation of $L_K(E)$. The second case, as we have seen in Theorems \ref{irr} and \ref{ranga}, is a mixed case when there are digraphs such that $L_K(E)L$ is either maximal or not maximal, respectively. In this case the proper representation $T=L_K(E)/L_K(E)L$ may have interesting properties.

For some applications of the process described above, we now present some cases by way of demonstration.
\begin{theorem}\label{qua1} Let $\frak c$ be a set of periods based at $v\in E^0$  such that sets of arrows in each period in $\frak c$ are pairwise disjoint, and $B_{\frak c}$  the set of all paths of the form $\m^* \n^*$ where $\m$ runs over all proper heads of $\d \in \frak c$ and $\n$ runs over all products of $\d \in \frak c$.  If $D$ is a division ring factor of $K\frak c^*$ by an ideal $G$ such that every $\d \in \frak c^*$ has a nonzero image in $D$, and  $L_{\frak c}$ is a left ideal of $KE^*$ generated by $G$ and paths outside $B_{\frak c}$, then $L_{\frak c}$ is a maximal left ideal of $KE^*$ which is not open in the topology $\frak T$ whence for finite digraphs $E$ $T_{\frak c}=\frac{L_K(E)}{L_K(E)L_{\frak c}}\cong L_K(E)\otimes_{KE^*}\frac{KE^*}{L_{\frak c}}$ is an irreducible representation of $L_K(E)$ whose endomorphism ring is isomorphic to $D$.
\end{theorem}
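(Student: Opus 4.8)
The plan is to analyse the quotient $M=KE^*/L_{\frak c}$ directly as a bimodule, to deduce its simplicity over $KE^*$ from the hypothesis that $D$ is a division ring, and only at the very end to pass to $L_K(E)$ by localization. Throughout I write $\g=v+L_{\frak c}$ for the canonical generator and let $\mathcal H$ denote the set of proper heads of the periods in $\frak c$, including the empty head $v$. The maximality and non-openness of $L_{\frak c}$ will be established for an arbitrary $E$, and the finiteness of $E$ will enter only in the passage from $M$ to $T_{\frak c}$.

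First I would pin down the structure of $M$. Since the periods in $\frak c$ have pairwise disjoint arrow sets, a word in the $\d^*$ is recovered from its underlying path by reading off which period each arrow belongs to; hence $K\frak c^*$ is free associative on $\{\d^*\mid \d\in\frak c\}$ and $\{\n^*\mid \n\in\langle\frak c\rangle\}$ is a $K$-basis of it. By the discussion preceding the statement (and \cite{ajm}), $B_{\frak c}$ is a Schreier basis of $L$, so $KE^*/L$ has $K$-basis $\{\overline{\m^*\n^*}\}$; grouping by the head part exhibits $KE^*/L=\bigoplus_{\m\in\mathcal H}\m^*K\frak c^*$ as a free right $K\frak c^*$-module on $\{\m^*\mid\m\in\mathcal H\}$. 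A short check shows that $L$, and likewise $(KE^*)G$, is stable under right multiplication by $K\frak c^*$ (appending a period to a path outside $B_{\frak c}$ keeps it outside $B_{\frak c}$), so $M$ inherits a right $K\frak c^*$-action on which $G$ acts as zero. Consequently $M$ is a $(KE^*,D)$-bimodule which, as a right $D$-module, is free: $M=\bigoplus_{\m\in\mathcal H}\m^*D$. I shall call $\m^*D$ the $\m$-slot and $v^*D=\g D$ the corner.

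Non-openness is then immediate: for $\d\in\frak c$ the element $(\d^*)^k$ lies in $B_{\frak c}$ and maps to $(\overline{\d^*})^{k}$, which is nonzero in the domain $D$ for every $k$; were $L_{\frak c}$ open it would contain a power $I^l$ and hence all long ghost paths $(\d^*)^k$, a contradiction. For maximality I would prove that $M$ is simple over $KE^*$. Note $\g$ generates, since $\m^*\g=\overline{\m^*}$ recovers every slot. Given a nonzero $x=\sum_{\m\in S}\m^*d_\m$ with all $d_\m\neq 0$, choose $\m_0\in S$ of maximal length and let $\s$ be the suffix with $\m_0\s=\d_0$, its period. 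Applying the ghost path $\s^*$ sends the $\m_0$-term into the corner as $\g\,(\overline{\d_0^*})\,d_{\m_0}$, which is nonzero because $\overline{\d_0^*}\neq 0$ and $D$ is a division ring. The cross terms vanish: if $\m\in S$ lies on a different period, disjointness of the arrow sets forces $\m\s$ to mix two periods, so it is neither a product of periods nor a proper head and reduces to $0$; if $\m$ lies on $\d_0$ but $\m\neq\m_0$, then $|\m|<|\m_0|$ and $\m\s$ traverses the arrows of $\d_0$ in a skipping order, so again it fails to be a proper head and reduces to $0$. Thus $\s^*x=\g\,c$ with $c\neq 0$ in $D$; multiplying by a preimage in $K\frak c^*$ of $c^{-1}$ returns $\g$, so $KE^*x=M$ and $L_{\frak c}$ is a maximal, hence maximal non-open, left ideal.

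The main obstacle is precisely the vanishing of the intra-period cross terms when a period revisits a vertex or repeats an arrow: there the naive ``skipping order is not a head'' argument must be replaced by the Schreier/tail bookkeeping of $B_{\frak c}$, checking that $\m\s$ cannot coincide with any basis word $\m'^*\n'^*$. Once simplicity is in hand, the endomorphism ring is read off from the cyclic description. One has $K\frak c^*\cap L_{\frak c}=G$ and $L_{\frak c}\,K\frak c^*\subseteq L_{\frak c}$, so $K\frak c^*$ lies in the idealizer of $L_{\frak c}$ and right multiplication by its image $D=K\frak c^*/G$ realizes $D$ inside the division ring $\End(_{KE^*}M)$ (Schur); identifying the idealizer with $K\frak c^*+L_{\frak c}$ then gives $\End(_{KE^*}M)\cong D$. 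Finally, for finite $E$ I would invoke Theorem \ref{loc}, Theorem \ref{simple1} and Corollary \ref{endoring}: since $L_{\frak c}$ is a maximal non-open left ideal of $KE^*$, the module $T_{\frak c}=L_K(E)\otimes_{KE^*}M\cong L_K(E)/L_K(E)L_{\frak c}$ is a simple left $L_K(E)$-module with $\End(_{L_K(E)}T_{\frak c})\cong\End(_{KE^*}M)\cong D$, which completes the proof.
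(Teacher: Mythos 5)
Your structural setup is fine and close in spirit to the paper's: the slot decomposition $M=\bigoplus_{\m}\m^*D$ over the Schreier basis $B_{\frak c}$, the non-openness argument via the elements $(\d^*)^k$, and the final passage to $L_K(E)$ through Theorem \ref{loc}, Theorem \ref{simple1} and Corollary \ref{endoring} are all sound. The genuine gap is the heart of the maximality proof: the claim that after multiplying by $\s^*$ (where $\m_0\s=\d_0$) all cross terms vanish is simply false, and not merely in need of better bookkeeping. Concretely, take one vertex $v$ with loops $a,b,c$ and $\frak c=\{aba,\,c\}$; both are periods (primitive closed paths) with disjoint arrow sets. For $x=(ab)^*d_1+v\,d_2$ you must take $\m_0=ab$ and $\s=a$, and then $\s^*x=(aba)^*d_1+a^*d_2$: the second term survives, because $a$ is a proper \emph{tail} of $aba$ which is simultaneously a proper \emph{head} of $aba$, so $a^*\n^*=(\n a)^*\in B_{\frak c}$ for every product of periods $\n$. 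Thus $\s^*x$ is not of the form $\g c$ with $c\in D$, and the step "multiply by a preimage of $c^{-1}$" collapses. Intra-period cross terms fail the same way: for $\d_0=aabab$, $\m_0=aaba$, $\s=b$, $\m=aa$ one gets $\m\s=aab$, again a proper head of $\d_0$. You flag exactly this as "the main obstacle" and propose to close it by checking that $\m\s$ never coincides with a basis word; but whenever a period overlaps itself the coincidence genuinely occurs, so that check cannot succeed. What is missing is a different idea, not finer Schreier bookkeeping.

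The paper's proof supplies that idea, and it is instructive to see where it diverges from yours. It never tries to reach the corner with a single multiplication. First, if some period $\d$ has no head in the support of $r$, it multiplies by $\d^*$: every term $h^*t$ with $h$ a nonempty proper head of a \emph{different} period dies, because a head of one period followed by arrows of another period is never in $B_{\frak c}$ (this is the only place disjointness is needed, and it is unconditionally true), leaving $\d^*t_0\neq 0$ in $D$. Otherwise it picks $\g=\a\b$ with the head $\a$ in the support and multiplies by $\b^*$, which makes the corner component $\g^*t_{\a}$ nonzero modulo $G$ --- at this stage spurious terms supported on heads of $\g$ may well survive, precisely the terms your argument cannot kill --- and then, using $|\frak c|\geq 2$ (the case $|\frak c|=1$ being the already known Rangaswamy module), it multiplies by $\l^*$ for a \emph{second} period $\l\neq\g$. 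This last multiplication annihilates every remaining term $h^*t$ with $h$ a nonempty proper head of $\g$, for the same disjointness reason, while carrying the corner to $\l^*\g^*t_{\a}$, which is nonzero in the division ring $D$. This closing multiplication by the ghost of a period different from the one being completed is the essential device absent from your proposal; without it, self-overlapping periods such as $aba$ defeat the reduction to the corner.
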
 
\begin{proof} Without loss of generality one can assume that $\frak c$ has at least two periods because the case of one period reduces to the well-known case of Rangaswamy modules.The idea of the proof is the following simple but crucial observation. Namely, by the assumption on arrows in $\frak c$ and the definition of $L_{\frak c}$ for any element $\n$ of the semigroup generated by $\frak c$ and any proper head $h$ of a period
$\d \in \frak c$, one has $a^*h^*_j\n^*\in L_{\frak c}$ for every arrow $a\in E^1$ which is not an arrow of $\d$. First we show that $L_{\frak c}$ is a proper left ideal of $KE^*$. Let $L$ be the left ideal generated by $F^*(E)\setminus B_{\frak c}$. Then we have the direct decomposition $KE^*=V\oplus L$ where $V$ is the $K$-space spanned by $B_{\frak c}$. Assume indirectly the equality $L_{\frak c}=KE^*$. Then there are elements $g, g_i\in K\frak c^*$ and proper heads $h_i$ of periods from $\frak c$ such that the following congruence
 $$v=g+\sum h^*_ig_i  \mod L$$
hold. By the observation made at the beginning of the proof and the extra assumption one has for two different periods $\d_1, \d_2$ the congruence
$$\d^*_2d^*_1=\d^*_2\d^*_1g\in G \mod L$$
which is a contradiction because elements of the semigroup generated by $\frak c^*$ maps to nonzero elements of $D$ by assumption. Therefore $L_{\frak c}$ is a proper left ideam of $KE^*$.

Next we are showing that $L_{\frak c}$ is a maximal left ideal of $KE^*$. Consider an arbitrary $r\in KE^*\setminus L_{\frak c}$. Then by the definition of $L_{\frak c}$ there are $\d_i \in \frak c (i=1, \dots, l)$, proper heads $h_{ij}\, (1\leq j\leq |\d_i|-1)$ of $\d_i \,(i=1, 2)$ of increasing length and nonzero elements $t_{ij}\in K\frak c^*$ together with  $t_0\in K\frak c^*$ nonzero $\mod L_{\frak c}$ such that the following congruence
\begin{align}\label{eq2}
r \equiv t_0+\sum h^*_{ij}t_{ij}=t_0+r_0 \quad \mod L_{\frak c}
\end{align}
holds with the least number of summands of $t_0, r_0$ and subject to this condition, of the minimal degree as an element of $KE^*$. Then any subsum of \eqref{eq2} does not belong to $L_{\frak c}$. 
If there is a period $\d\in \frak c$ such that none of the $h_{ij}$ is its head, then $\d^*r=\d^*t_0$
maps to a nonzero element in $D$. Therefore one can assume with thout loss of generality that for every period $\d$ in $\frak c$ there is some of its heads belonging to the $h_ij$. Take an arbitrary period $\g \in \frak c$ and one of its heads, say $\a$ with $\g=\a \b$, such that $\a$ is one of the $h_{ij}$. Then $\b^*r$ is again non-zero $\mod L_{\frak c}$. Since $\b^*t_0\in L_{\frak c}$ and $\b^*h^* t_{ij}\in L_{\frak c}$ hold for any head of $\d \in \frak c; \d\neq g$ we have 

$$\b^*r= t+\sum h^*_lt_l \mod L_{\frak c}$$
where $t, t_l\in K\frak c^*$ but $t, t_l\notin G$ and $h_l$ are proper heads of $\g$. Since the case $|\frak c|=1$ is exactly the case of Rangaswamy simple module defined by an irreducible polynomial and a set $\frak c$ of one period $\d$ one can assume that $\frak c$ has at least 2 periods. In this case, there is  a period $\l \in \frak c; \l \neq \g$ whence $\l^*\b^*r=\l^* t\in K\frak c^*$ is not zero $\mod G$. 
Consequently,  $L_{\frak c}$ is a maximal left ideal of $KE^*$ because $D$ is a division ring and $\l^*\b^*r$ becomes nonzero in $D$. All the other claims of Theorem \ref{qua1} are immediate by the assumption and Theorem \ref{simple1}
\end{proof}
\begin{remarks}\label{warning2} 
\begin{enumerate}
\item It is difficult to test whether a maximal left ideal of $KE^*$ generates a maximal left ideal of $L_K(E)$. For finite digraphs as a consequence of flat localizations it is enough to check whether a left ideal of $KE^*$ is not open in the topology $\frak T$ defined by powers $I^n$ of the ideal generated by ghost arrows and sinks of $E$.  
When infinite emitters are involved, one can use representation theory to decide the question, as for infinite Chen words in the discussion of Theorems \ref{irr} and \ref{ranga}. However, a maximal left ideal $L$ of $KE^*$ satisfying $v\notin L$ where $v\in E^0$ is an infinite emitter, generates a proper left ideals of $L_K(E)$ because in this case linear combbinations of paths $\m\n^*\,( s(\m)=s(\n)=v, r(\m)=r(\n))$ do not produce $v$ by the lack of Cuntz-Krieger condition (CK2). Another way to see this claim is by observing that the left ideal of $KE^*$ generated by all ghost arrows and singular vertices of $E$ form a left dense ideal of $ KE^*$ which allows us to view $L_K(E)$ as a subring (ring of quotients in Utumi's sense) of the maximal left ring of quotients of $KE^*$, and then to look at the corresponding images of $v$ and linear combinations  $\sum_i k_i\m_i\n^*_i\,( s(\m_i)=s(\n_i)=v, r(\m_i)=r(\n_i); \, 0\neq k_i\in K)$ at $v$ in $KE^*$.

\item In connection to Remarks \ref{hilbert0}(2) the construction of maximal left ideals $L_{\frak c}$
shows bona fides of localization technique when we take a careful look at intersections of maximal left ideals of $L_K(E)$ and $KE^*$. They lead to relatively simple, interesting way constructing maximal left ideals of quiver algebras probably also with endomorphism rings of the associated irreducible representations.
\item As in the case of finite digraphs instead of considering maximal ideals $G$ of $K\frak c^*$ whose factors are division rings one can take first a simple modules $K\frak c^*/L$ of $K\frak c^*$ such that images of periods from $\frak c^*$ in the factor algebra $K\frak c^*/G$ are nonzero where $G$ is the primitive annihilator ideal, i.e., the largest ideal contained in $L$. Then consider the left ideal $L_{\frak c}$ of $KE^*$ generated by $F^*(E)\setminus B_{\frak c}$ and $L$. Note that $L_{\frak c}$ depends on both $\frak c$ and $L$. We deal with division algebra factors for the sake of simplicity and transparence. 
\end{enumerate}
\end{remarks}
The following important but now obvious consequence of Theorem \ref{qua1} shows clearly that the class of Mantese-Rangaswamy irreducible representations are essentially different from the Hilbert ones even in the case of finite digraphs. For the sake of simplicity recall that two closed paths are called \emph{disjoint} if they have no common arrows. Consequently, $K\frak c^*$ is a free associative algebra with the identity $v$ of rank $|\frak c^*|$ if $\frak c$ is a set of disjoint periods ased at a vertex $v$. Moreover $\frak c$ is finite if $E$ is finite.
\begin{corollary}\label{manqua1} Let $E$ be a finite digraph. If $\frak c$ is a set of disjoint periods at a vertex $v$ and $D$ is a division $K$-algebra of $K\frak c^*$ by an ideal $G$ disjoint from $\frak c^*$, then  the left ideal $L_{\frak c}$ of $L_K(E)$ defined in Theorem \ref{qua1} is maximal and the simple factor module $L_K(E)/L_{\frak c}$ is a Mantese-Rangaswamy irreducible representation of
$L_K(E)$ with the endomorphism ring $D$ which is not a Hilbert module if one period in $\frak c$ is not a loop.
\end{corollary}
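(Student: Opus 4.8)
The plan is to treat the structural assertions and the ``not a Hilbert module'' clause separately. The maximality of the left ideal $L_K(E)L_{\frak c}$, the simplicity of $T_{\frak c}$, and the identification $\End(_{L_K(E)}T_{\frak c})\cong D$ are all immediate from Theorem~\ref{qua1}: that result already produces $L_{\frak c}$ as a maximal left ideal of $KE^*$ which is not open in $\frak T$, and for a finite digraph $E$ Theorem~\ref{loc} makes $L_K(E)$ a perfect (flat epimorphic) localization of $KE^*$. Hence by Corollary~\ref{rightid} the assignment $L_{\frak c}\mapsto L_K(E)L_{\frak c}$ carries maximal non-open left ideals of $KE^*$ to maximal left ideals of $L_K(E)$, while Corollary~\ref{endoring} identifies the two endomorphism rings. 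Thus $T_{\frak c}$ is indeed a Mantese--Rangaswamy irreducible representation with endomorphism ring $D$, and the only genuinely new point is that it is not isomorphic to any Hilbert module once some period of $\frak c$ has length $\geq 2$.

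For that point I would reduce the question from $L_K(E)$-modules to $KE^*$-modules. Both $T_{\frak c}$ and an arbitrary Hilbert module $S^{D'}_{\frak r'}$ are induced as $L_K(E)\otimes_{KE^*}(KE^*/L)$ from maximal non-open left ideals $L$ of $KE^*$, so Theorem~\ref{iso1} applies: $T_{\frak c}\cong S^{D'}_{\frak r'}$ over $L_K(E)$ if and only if the inducing simple modules $KE^*/L_{\frak c}$ and $KE^*/L'$ are isomorphic over $KE^*$. The invariant I would use to separate them is the dimension of a simple $KE^*$-module over its own endomorphism division ring, which is manifestly a $KE^*$-isomorphism invariant. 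A Hilbert module is by definition induced from a \emph{maximal two-sided} ideal $L'=\ker(KE^*\to D')$, so $KE^*/L'\cong D'$ is one-dimensional over $\End_{KE^*}(KE^*/L')\cong D'$.

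It then remains to compute the same dimension for $KE^*/L_{\frak c}$ and show it exceeds $1$. By Theorem~\ref{qua1} the set $B_{\frak c}$ of paths $\m^*\n^*$ (with $\m$ a proper head of some $\d\in\frak c$ and $\n$ a product of periods) is a Schreier basis complementing $L_{\frak c}$; reducing the factor $\n^*\in K\frak c^*$ modulo $G$ collapses that slot onto $D=K\frak c^*/G$, so $KE^*/L_{\frak c}=\bigoplus_{\m}\overline{\m^*}\,D$, with $\m$ ranging over the proper heads. The endomorphisms supplied by Corollary~\ref{endoring} act precisely by right multiplication by $D$ in this $\n^*$-slot, leaving the head index $\m$ untouched; here the disjointness of the periods is what guarantees that no head can be absorbed into the period semigroup. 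Consequently $KE^*/L_{\frak c}$ is free of rank $h$ over its endomorphism ring $D$, where $h$ is the number of proper heads, namely $1+\sum_{\d\in\frak c}(|\d|-1)$, the summand $1$ coming from the length-$0$ head $v$. If some period has length $\geq 2$ it contributes at least one positive-length proper head, forcing $h\geq 2$. Were $T_{\frak c}$ a Hilbert module, Theorem~\ref{iso1} would give $KE^*/L_{\frak c}\cong KE^*/L'$ and hence $h=1$, a contradiction; so $T_{\frak c}$ is not a Hilbert module.

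The step I expect to be the main obstacle is choosing the correct invariant and verifying it exactly. A crude comparison of $K$-dimensions is useless, since $D$ may be infinite-dimensional over $K$; the right quantity is the dimension over the endomorphism ring, and the delicate part is confirming that the $D$-action fixes the head index so that this dimension is \emph{exactly} $h$ rather than merely at least $1$. The reduction via Theorem~\ref{iso1} is what lets me work intrinsically at the level of $KE^*$-modules and so sidestep any dependence on a chosen generator of the simple $L_K(E)$-module.
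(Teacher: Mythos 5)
Your proposal is correct. The structural half (maximality of $L_K(E)L_{\frak c}$, simplicity of the quotient, endomorphism ring $D$) is exactly the paper's route: the paper gives no separate proof of this corollary, deriving those claims, as you do, from Theorem \ref{qua1} combined with Theorem \ref{loc}, Corollary \ref{rightid} and Corollary \ref{endoring}, and it treats the non-Hilbert clause as self-evident. Your real contribution is that you actually prove the non-Hilbert clause, by a route the paper never articulates: reduce to $KE^*$-modules via Theorem \ref{iso1}, then separate the candidates by the dimension of a simple $KE^*$-module over its endomorphism division ring. Your computation of that invariant is sound: disjointness of the periods gives unique parsing of $B_{\frak c}$, hence $KB_{\frak c}=\bigoplus_{\m}\m^*K{\frak c}^*$, $L_{\frak c}=L\oplus\bigoplus_{\m}\m^*G$, and $KE^*/L_{\frak c}\cong\bigoplus_{\m}\m^*D$ as right $D$-modules, free of rank $h=1+\sum_{\d\in{\frak c}}(|\d|-1)\geq 2$, whereas a Hilbert module has rank $1$. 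The one pressure point, which you correctly flag, is that this needs the \emph{full} endomorphism ring of $KE^*/L_{\frak c}$ to be the right-multiplication copy of $D$ (equivalently, the idealizer of $L_{\frak c}$ to equal $K{\frak c}^*+L_{\frak c}$), not merely a ring abstractly isomorphic to $D$; were the idealizer larger, the rank over the true endomorphism ring could drop below $h$. That canonical reading is what Theorem \ref{qua1} and Corollary \ref{endoring} assert (all the paper's endomorphism computations are right multiplications on cyclic modules), so you are entitled to it, but a variant avoids the issue entirely and is closer to what the paper's surrounding remarks gesture at: if $T_{\frak c}$ were Hilbert, Theorem \ref{iso1} would give $KE^*/L_{\frak c}\cong KE^*/L'$ with $L'$ a maximal \emph{two-sided} ideal; annihilators of isomorphic modules coincide and $\ann_{KE^*}(KE^*/L')=L'$, so $L'\subseteq L_{\frak c}$, and maximality of both left ideals forces $L'=L_{\frak c}$, making $L_{\frak c}$ two-sided --- which it is not, since for a period $\d=a_1\cdots a_n\in{\frak c}$ with $n\geq 2$ one has $(a_2\cdots a_n)^*\in L_{\frak c}$ (otherwise disjointness forces $a_2\cdots a_n=a_1\cdots a_{n-1}$, so $\d$ is a power of a loop, contradicting primitivity), yet $(a_2\cdots a_n)^*a_1^*=\d^*\notin L_{\frak c}$. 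Your approach costs more verification but buys more: it computes $\dim_D(KE^*/L_{\frak c})=h$ explicitly, an invariant that also distinguishes Mantese--Rangaswamy modules with different head counts from one another, not merely from the Hilbert ones.
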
 
In the remainder of this section for the sake of transparence we consider only loops as periods although one can see by almost the same proof but with more technical, unnecessarily complicated details that the results remain true for disjoint periods which are no longer loops. Therefore  
we are going to present examples for arbitrary digraphs $E$ together with maximal left ideals $L$ of $KE^*$ such that the extended left ideals $L_K(E)L$ are maximal and the induced simple modules $L_K(E)/L_K(E)L$ have noncommutative endomorphism rings. For this aim assume that the characteristic of $K$ is not 2. A \emph{(generalized) quaternion algebra} $D=(c, d)$ where $c, d$ are scalars in $K$ is a 4-dimensional $K$-algebra  with a basis $\{1, i, j, k\}$ subject to $i^2=-c, j^2=-d, ij=-ji=k$. It is well-known that $D$ is a centrally simple $K$-algebra and even a division ring if the associated quadratic form $Q(x_0, x_1, x_2, x_3)=x^2_0+cx^2_1+dx^2_2+cdx^2_3$ does not represent $0$, i.e., $0$ is the unique root of $Q$. Since our goal is getting representations with division quaternion endomorphism algebras we assume that $D$ is a division ring although it works generally with a slight modification. In fact, in the same manner one can construct irreducible representations whose endomorphism rings are cyclic, even finite dimensional division algebras over $K$, establishing a connection to finite dimensional division rings. 

\begin{theorem}\label{qua2} Assume that there are two 2 loops $a, b$ based at $v\in E^0$ and $K$ is a field of characteristic not 2 together with non-zero scalars $c, d$ such that the associated quaternion $K$-algebra
$D=(c, d)$ is a division ring. If $L_{c, d}$ is the left ideal of $KE^*$ generated by $\{u\in E^0\,|\, u\neq v\}\cup \{x^*\, |\, x\in s^{-1}(v)\,\, \& \,\, a\neq x\neq b\}\cup \{(a^*)^2+cv, (b^*)^2+dv, a^*b^*+b^*a^*, b^*a^*b^*+da^*, a^*b^*a^*+cb^*\}$ and all ghost paths not contained in the semigroup generated by $a^*, b^*$, then $T_{c, d}=L_K(E)/L_K(E)L_{c, d}$ is an irreducible representation of $L_K(E)$ with $\End(T_{c, d})\cong D=(c,d)$. 
\end{theorem}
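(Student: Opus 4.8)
The plan is to read Theorem~\ref{qua2} as the quaternionic special case of Theorem~\ref{qua1} with $\frak c=\{a,b\}$. Since $a$ and $b$ are two distinct loops at $v$ they have no arrow in common, so $\frak c$ is a set of disjoint periods and $K\frak c^{*}=K\langle a^{*},b^{*}\rangle$ is a free associative algebra of rank $2$ with identity $v$. As $\Char K\neq 2$ and $(c,d)$ is chosen so that its associated quadratic form does not represent $0$, $D=(c,d)$ is a $4$-dimensional central division $K$-algebra with $K$-basis $\{1,i,j,k\}$; the assignment $a^{*}\mapsto i,\ b^{*}\mapsto j$ therefore extends to a surjective $K$-algebra homomorphism $\phi\colon K\langle a^{*},b^{*}\rangle\twoheadrightarrow D$. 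Its kernel $G$ is the unique maximal two-sided ideal with $K\langle a^{*},b^{*}\rangle/G\cong D$, and because $i,j$ are units of the division ring $D$, every element of the semigroup generated by $a^{*},b^{*}$ has nonzero image; thus the hypotheses of Theorem~\ref{qua1} are in force.

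The technical core is to verify that the five displayed elements form a free (Schreier) generating set for $G$ \emph{as a left ideal} of $K\langle a^{*},b^{*}\rangle$, not merely as a two-sided one. By Lewin's argument~\cite{le1} a left ideal of codimension $\dim_{K}D=4$ in a free algebra of rank $2$ is free of rank $1+4(2-1)=5$, which matches the number of relations. I would take the Schreier transversal $\{v,a^{*},b^{*},a^{*}b^{*}\}$, whose $\phi$-images $\{1,i,j,k\}$ are a $K$-basis of $D$, and carry out the standard rewriting: for each transversal word $t$ and each $x\in\{a^{*},b^{*}\}$ with $tx$ not already in the transversal, the element $tx-\sigma(tx)$, where $\sigma(tx)$ is the normal form of $tx$ computed from the reductions $(a^{*})^{2}\equiv -cv$, $(b^{*})^{2}\equiv -dv$, $b^{*}a^{*}\equiv -a^{*}b^{*}$, is a free generator of $G$. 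The three quadratic relations $(a^{*})^{2}+cv$, $(b^{*})^{2}+dv$, $a^{*}b^{*}+b^{*}a^{*}$ arise from $t\in\{a^{*},b^{*}\}$, and the two cubic relations arise from $t=a^{*}b^{*}$ as the normal-form reductions of $a^{*}b^{*}a^{*}$ and of $a^{*}(b^{*})^{2}$ (equivalently $b^{*}a^{*}b^{*}$ modulo $G$). One checks each lies in $G$ and that together they generate it; in particular $G$ is a maximal left ideal of $K\langle a^{*},b^{*}\rangle$ with simple factor module $D$.

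With this the left ideal $L_{c,d}$ is literally the ideal $L_{\frak c}=L+(KE^{*})G$ of Theorem~\ref{qua1}: the generators $\{u\in E^{0}:u\neq v\}$, the ghost arrows $x^{*}$ with $a\neq x\neq b$, and all ghost paths lying outside the semigroup generated by $a^{*},b^{*}$ are exactly a generating set for the part $L$ coming from $F^{*}(E)\setminus B_{\frak c}$, while the five relations generate $G$. Theorem~\ref{qua1} then immediately gives that $L_{c,d}$ is a maximal left ideal of $KE^{*}$ which is not open in $\frak T$, that $KE^{*}/L_{c,d}\cong D$, and, via Corollary~\ref{endoring}, that $\End_{KE^{*}}(KE^{*}/L_{c,d})\cong\End({}_{D}D)\cong D^{\mathrm{op}}\cong D$, the last isomorphism being the conjugation anti-automorphism of the quaternion algebra. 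Along the flat ring of quotients of Theorem~\ref{loc} this transports to $T_{c,d}=L_K(E)\otimes_{KE^{*}}D\cong L_K(E)/L_K(E)L_{c,d}$ with endomorphism ring $D$.

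I expect the genuine obstacle to be upgrading this to irreducibility over $L_K(E)$ for an \emph{arbitrary} digraph, since Theorem~\ref{qua1} supplies irreducibility only in the finite case. For finite $E$, or more generally whenever $v$ is a regular vertex, the perfect-localization correspondence of Theorem~\ref{simple1} applies directly and $T_{c,d}$ is simple. The delicate situation is an infinite emitter $v$: exactly as with the Rangaswamy modules of Theorem~\ref{ranga}, one must rule out that $L_K(E)L_{c,d}$ fails to be maximal through the appearance of proper Cohn--Jacobson subfactors $S_{v}$. I would settle this by the representation-space bookkeeping of Remark~\ref{rep}, building an explicit $K$-basis of $T_{c,d}$ from reduced paths $\m h^{*}$ in $\hat E$ and checking that, because the surviving operators $a^{*},b^{*}$ act as the units $i,j$ of $D$ so that no nonzero vector is annihilated by both, every nonzero element of $T_{c,d}$ generates the whole module.
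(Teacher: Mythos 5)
Your overall strategy --- specializing Theorem~\ref{qua1} to $\frak c=\{a,b\}$, importing maximality of $L_{c,d}$ and the endomorphism ring from Corollary~\ref{endoring}, and treating the infinite-emitter case separately --- is genuinely different from the paper's proof, which never invokes Theorem~\ref{qua1}. The paper works directly with the representation space spanned by $B=F(E)v\cup F(E)a^*\cup F(E)b^*\cup F(E)a^*b^*$, proves simplicity of $T_{c,d}$ for arbitrary $E$ by a double induction on the width and degree of a representing element, and then shows every endomorphism of $T_{c,d}$ is induced by one of $KE^*/L_{c,d}$. For finite $E$ your reduction is cleaner, and you correctly record a point the paper suppresses, namely $\End(_{KE^*}D)\cong D^{\mathrm{op}}\cong D$ via conjugation. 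However, two of your steps fail.

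First, the Schreier step. Carrying out your own rewriting honestly, with $i^2=-c$, $j^2=-d$, $ij=-ji=k$, one gets $iji=cj$ and $jij=di$, so the kernel $G$ of $a^*\mapsto i$, $b^*\mapsto j$ contains $a^*b^*a^*-cb^*$ and $b^*a^*b^*-da^*$; the elements $a^*b^*a^*+cb^*$ and $b^*a^*b^*+da^*$ appearing in the statement do \emph{not} lie in $G$ (their images are $2cj$ and $2di$, nonzero since $\Char K\neq 2$). So your promised verification ``one checks each lies in $G$'' cannot be completed, and $L_{c,d}$ as printed is not the ideal $L_{\frak c}$ of Theorem~\ref{qua1}. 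Worse, with the printed signs the quotient $KE^*/L_{c,d}$ is still $4$-dimensional and simple, but $(a^*)^2$ and $(b^*)^2$ act on the basis $\{v,a^*,b^*,a^*b^*\}$ as $\mathrm{diag}(-c,-c,c,c)$ and $\mathrm{diag}(-d,d,-d,d)$; their spectral projections together with the actions of $a^*,b^*$ generate all of $M_4(K)$, so $\End(KE^*/L_{c,d})\cong K$, not $D$. Thus your argument (like the paper's, which asserts the same false membership via ``Lewin--Schreier'') proves at best the sign-corrected statement.

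Second, and more seriously, the infinite-emitter case, which you rightly single out as the real obstacle, cannot be settled by the argument you sketch. Your key claim --- that no nonzero vector of $T_{c,d}$ is annihilated by both $a^*$ and $b^*$ --- is false. Put $\xi=v-aa^*-bb^*+L_K(E)L_{c,d}$. By (CK1), $a^*(v-aa^*-bb^*)=a^*-a^*=0$ and $b^*(v-aa^*-bb^*)=b^*-b^*=0$, while $x^*(v-aa^*-bb^*)=x^*\in L_{c,d}$ for every other $x\in s^{-1}(v)$. When $v$ is an infinite emitter there is no (CK2) relation at $v$, and $v-aa^*-bb^*$ acts as a sum of three distinct basis vectors on the representation space built from $B$, so $\xi\neq 0$; consequently $L_K(E)\xi$ is a nonzero homomorphic image of the simple Cohn--Jacobson module $S_v$, and it is proper (every element of $S_v$ is killed by a power of $a^*$, while the coset of $v$ is not). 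Hence $T_{c,d}$ is \emph{not} simple in this case, exactly as in Theorems~\ref{ranga}, \ref{qua3} and Proposition~\ref{linear}, where infinite emitters always produce Cohn--Jacobson subfactors; note also that your intermediate claim that Theorem~\ref{simple1} applies ``whenever $v$ is regular'' is unjustified, since that correspondence requires $E$ finite. No completion of your sketch exists --- and indeed the paper's own induction silently breaks at the same element, since the step asserting $x^*z\neq 0$ for $x\in\{a,b\}\setminus h_{\alpha}(1)$ fails for $z=v-aa^*-bb^*$.
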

\begin{proof} The definition of $L_{c, d}$ shows that $\{v, a^*, b^*, a^*b^*\}$ induces a $K$-basis of $KE^*/L_{c, d}$. Therefore in view of the Lewin-Schreier formula and Shreier's technique, \cite{rr} and \cite{le1} the elements $(a^*)^2+cv, (b^*)^2+dv, a^*b^*+b^*a^*, b^*a^*b^*+da^*, a^*b^*a^*+cb^*$ form a free basis of $\langle(a^*)^2+c, (b^*)^2+d, a^*b^*+b^*a^*\rangle \vartriangleleft K\langle a^*, b^*\rangle$ as a left $K\langle a^*, b^*\rangle$-module and the associated factor algebra is the quaternion division algebra $D=(c,d)$. It is worth to note that according to Remarks \ref{hilbert0}(2) one cannot replace the relations  $(a^*)^2+cv, (b^*)^2+dv$ by requiring $ca^2+db^2+v$ even in the case of finite $E$.
Moreover, it is immediate that every non-zero element of $T_{c, d}$ can be represented  by a linear combination of paths from $B=F(E)v\cup F(E)a^*\cup F(E)b^*\cup F(E)a^*b^*$, i.e., an element $z$ of the vector space $V$ spanned by $B$. In the other words, one can assign to each generator $u\in E^0, c\in E^1$ linear transformations $\ta_u, \ta_c, \ta_{c^*}$ of $V$ in the obvious manner and to check that this assignment defines an $K$-algebra homomorphism from $L_K(E)$ in 
$\End(_KV)$. By this way, one can see again that $L_{c, d}$ and $L_K(E)L_{c,d}$ are proper left ideals of $KE^*$ and $L_K(E)$, respectively.  
First, we show the simplicity of $T_{c,d}$. Take an arbitrary non-zero element $m$ of $T_{c,d}$ which can be represented by some $z\in V$. The \emph{width} $N$ and \emph{degree} $\deg z$ of $z$ are by definition the number and the longest length of paths from $F(E)v$ involved in the linear combination of elements from $B$ for $z$. Among the  representatives of $m \mod L_K(E)L_{c, d}$ choose $V\ni z \equiv m \mod L_K(E)L_{c, d}$ having the smallest width, and that subject to this condition, the degree
$\deg z$ is minimal. Therefore any proper subsum of $z$ is also not contained in $L_K(E)L_{c, d}$.   
To check the simplicity of $T_{c,d}$ it is enough to see that there is a path $\a\in F(E)$ such that $k\a^*z\in \{v, a^*, b^*, a^*b^*\}$ for some $k\in K$. We shall use double induction on the width and length of $z$. The case $N=1$ is trivial. Assume the claim for all widths $\leq N-1 (N\geq 2)$ and all degrees. Consider now a nonzero element $m$ of $T_{c,d}$ such that $m$ can be represented by a $z\in V$ with the smallest width $N$ and that subject to this condition, the degree
$\deg z$ is minimal. By considering $\a^*z$ where $\a\in F(E)v$ is a path of $z$ having minimal length, one can assume that $z$ contains at least one element from $\{v, a^*, b^*, a^*b^*\}$. If an initial arrow $x$ of a path of $z$ is not in $\{a, b\}$, then $x^*z$ would have the smaller width and so the claim follows by the induction hypothesis. Moreover, the claim is also obvious if $\deg z=0$. Hence, assume $\deg z\geq 1$. Therefore $z$ has at least one path $\a$ of minimal positive length. Take $x\in \{a, b\}\setminus h_\a(1)$, then $x^*z\neq 0$ has the smaller width, hence the claim follows from the induction hypothesis. It is now easy to compute the endomorphism algebra $\End (T_{c,d})$. Every endomorphism of  $T_{c,d}$ is uniquely determined by the image $z$ of $v\in V$. By the above argument there is $\a \in F(E)v$ satisfying $\a^*z\in \{v, a^*, b^*, a^*b^*\}$. This means that every endomorphism of  $T_{c,d}$ is induced by one of $KE^*/L_{c,d}$. However, the endomorphism algebra $\End (KE^*/L_{c,d})$ is the quaternion division algebra
$D=(c, d)$ and so our proof is complete. 
\end{proof}
\begin{remarks}\label{warning1}
\begin{enumerate}
\item Results of this sections show that representation theory of Leavitt path algebras can start first with one of finite digraphs and then one can focus on left ideals of the dual quiver algebras which are maximal in the set of not open ideals with respect to the  topology $\frak T$ defined by the ideal generated by all ghost arrows and sinks. The corresponding extended left ideals of Leavitt path algebras may induce not necessarily simple representations with interesting important properties. Looking for generators of these left ideals may lead to interesting elements of the quiver algebras which could generate maximal left ideals in the Leavitt algebras (cf. also Remark \ref{chen}, Remarks \ref{hilbert0}(2)).
\item  If $E$ is finite, conditions of Theorem \ref{qua1} imply that $\frak c$ is finite whence $K\frak c^*$ is a free associative algebra in $|\frak c|$ variables. To remove this finite restriction one can consider free associative subalgeras of $K\frak c^*$ in the case $|\frak c^*|\geq 2$ because $K\frak c^*$ contains in this case free associative algebras of infinite rank. Therefore one can consider division $K$-algebras of these free associative subalgebras, and then use them to obtains simple modules over $KE^*$! 
\end{enumerate}
\end{remarks}
Our next goal is to extend Mantese's result, Theorem \ref{man} to digraphs having infinite emitters. Even for these digraphs we still have essentially a Hilbert representations because only linear polynomials and loops are involed in the construction.  Keeping the notation fixed above, we have

\begin{theorem}\label{qua3} If $a_i\,(i\in I, |I|\geq 2)$ is a not necessarily finite set  of loops at $v$ and $f_{\frak r}=\sum_i r_ia_i-v$ for $0\neq \frak r \in K^I$ with at least two but almost all entries 0, then $S^K_{\frak r}=L_K(E)v/L_K(E)f_{\frak r}$ is a simple image of $T^K_{\frak r}=L_K(E)\otimes_{KE^*}\frac{KE^*v}{L_{\frak r}}$, and $\End(S^K_{\frak r})\cong K$ where $L_{\frak r}$ is a left ideal of $KE^*$ generated by $r_iv-a^*_iv \,(i\in I)$ and $\{x^*\in s^{-1}(v) \,|\, x\neq a_i \,\, \forall \,\, r_i\neq 0 \}$ and $L_{\frak r}=L_K(E)f_{\frak r}\cap KE^*$ holds, provided $v$ is regular.
Moreover, if $v$ is an infinite emitter, then $r=f_{\frak r}\otimes \bar v\, (\bar v=v+L_{\frak r}\in T^K_{\frak r})$ is not zero; the submodule
$L_K(E)r$ is simple and isomorphic to the Cohn-Jacobson module $S_v$ whence $T^K_{\frak r}$ has length 2.
\end{theorem}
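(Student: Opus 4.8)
The plan is to separate the two mechanisms at work: the relations coming from (CK1), which hold at every vertex, and those coming from (CK2), which are available only at regular vertices. Writing $z$ for the canonical generator (the image of $v$), I would first record the Mantese relations. Left-multiplying the defining relation $f_{\frak r}z=0$ by a ghost loop $a_j^{*}$ and by an $x^{*}$ with $x\in s^{-1}(v)$, $x\neq a_i$, and using only (CK1), one computes $a_j^{*}f_{\frak r}=r_jv-a_j^{*}$ and $x^{*}f_{\frak r}=-x^{*}$, hence $a_i^{*}z=r_iz$ and $x^{*}z=0$. This shows $L_{\frak r}\subseteq\ann_{L_K(E)}z\cap KE^{*}$, so each generator of $L_{\frak r}$ lies in $L_K(E)f_{\frak r}$ and there is a canonical surjection $\phi\colon T^{K}_{\frak r}\to S^{K}_{\frak r}$ sending $1\otimes\bar v$ to $z$. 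In parallel I would use the Lewin--Schreier technique, as in Remark \ref{chen}, to verify that the listed generators form a Schreier basis of $L_{\frak r}$ relative to $F^{*}(E)$, so that $KE^{*}v/L_{\frak r}\cong K$ with $a_i^{*}\mapsto r_i$, $x^{*}\mapsto0$; since some $r_i\neq0$, no power $I^{\ell}$ lies in $L_{\frak r}$, so $L_{\frak r}$ is a maximal left ideal of $KE^{*}$ that is not open in $\frak T$.

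For the regular case I would invoke (CK2) at $v$: from $v=\sum_{x\in s^{-1}(v)}xx^{*}$ together with the congruences $a_ia_i^{*}\equiv r_ia_i$ and $xx^{*}\equiv0\pmod{L_K(E)L_{\frak r}}$ one gets $v\equiv\sum_i r_ia_i$, i.e. $f_{\frak r}\in L_K(E)L_{\frak r}$. Hence $r=0$, $\phi$ is an isomorphism, and the equality $L_{\frak r}=L_K(E)f_{\frak r}\cap KE^{*}$ follows from $\frak T$-saturation (Corollary \ref{rightid} and Theorem \ref{loc}); the identification $\End(S^{K}_{\frak r})\cong\End(_{KE^{*}}KE^{*}v/L_{\frak r})=K$ is then Corollary \ref{endoring}. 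Simplicity of $S^{K}_{\frak r}$ is the Mantese computation of Theorem \ref{man}, pushed through the relations $a_i^{*}z=r_iz$ and $\sum_i r_ia_iz=z$ by the same prefix-reduction argument used in the proof of Theorem \ref{qua2}.

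The substantive new point is the infinite-emitter case, where (CK2) is unavailable at $v$ and the forcing above collapses. To prove $r\neq0$ I would exhibit an explicit $L_K(E)$-module $M$: let $M$ be the free $K$-space on the set of real paths $\alpha$ with $r(\alpha)=v$; let vertices and real arrows act by concatenation on the left; let a ghost arrow $b^{*}$ act by deleting an initial $b$ on paths of positive length; and, crucially, set $a_i^{*}\cdot v=r_iv$ and $x^{*}\cdot v=0$ on the length-$0$ generator $v$. The key observation is that this assignment respects (CK1) and respects (CK2) at every vertex other than $v$, while (CK2) at $v$ would be exactly the relation $v=\sum_i r_ia_i$ that one must not impose; because $v$ is an infinite emitter this relation is absent from $L_K(E)$, so $M$ is a genuine $L_K(E)$-module. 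Since $L_{\frak r}\cdot v=0$ in $M$, there is a surjection $T^{K}_{\frak r}\to M$ carrying $r=f_{\frak r}\otimes\bar v$ to $f_{\frak r}\cdot v=\sum_i r_ia_i-v$, a nonzero element of the free space $M$; hence $r\neq0$. Finally, the relations $a^{*}r=0$ for all $a\in s^{-1}(v)$ and $vr=r$ (read off from $a_j^{*}f_{\frak r}=r_jv-a_j^{*}\equiv0$) exhibit $L_K(E)r$ as a nonzero image of the simple Cohn--Jacobson module $S_v$, so $L_K(E)r\cong S_v$; combined with the simplicity of $S^{K}_{\frak r}$ this yields the exact sequence $0\to S_v\to T^{K}_{\frak r}\to S^{K}_{\frak r}\to0$ and length $2$.

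I expect the main obstacle to be the verification that the explicit module $M$ is well defined, namely that the displayed action satisfies (CK1) and (CK2) at every regular vertex while leaving the real paths ending at $v$ linearly independent. This is precisely the step encoding why the infinite-emitter hypothesis is indispensable, and it is the analogue of the representation-space constructions of Remark \ref{rep}; the remaining bookkeeping (the Schreier-basis claim for $L_{\frak r}$ and the prefix-reduction proof that $S^{K}_{\frak r}$ is simple) is routine but must be handled with care when paths leave and re-enter $v$.
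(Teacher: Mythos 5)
Your proposal follows, in all essentials, the paper's own proof. The decisive ingredient---the module $M$ with $K$-basis $F(E)v$, vertices and real arrows acting by concatenation, ghost arrows by deletion, and the twisted rule $a_i^*\cdot v=r_iv$, $x^*\cdot v=0$---is literally the representation space the paper constructs to show $f_{\frak r}\notin L_K(E)L_{\frak r}$, hence $r\neq 0$, when $v$ is an infinite emitter; likewise your (CK1) computation giving $L_{\frak r}\subseteq L_K(E)f_{\frak r}$, the (CK2) computation collapsing $T^K_{\frak r}$ onto $S^K_{\frak r}$ for regular $v$, the identification of $L_K(E)r$ as a nonzero image of the Cohn--Jacobson module $S_v$, the exact sequence $0\to S_v\to T^K_{\frak r}\to S^K_{\frak r}\to 0$, and the deferred Mantese-type prefix reduction for simplicity all match the paper's steps.

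There is, however, one genuine gap: your treatment of the regular case rests on machinery the paper possesses only for finite digraphs. You deduce $L_{\frak r}=L_K(E)f_{\frak r}\cap KE^*$ from ``$\frak T$-saturation (Corollary \ref{rightid} and Theorem \ref{loc})'' and $\End(S^K_{\frak r})\cong K$ from Corollary \ref{endoring}. But Theorem \ref{loc} says that for infinite $E$ the extension $KE^*\subseteq L_K(E)$ is merely a ring of quotients in Utumi's sense, not a flat bimorphic localization, so Corollary \ref{rightid} does not apply; and Corollary \ref{endoring} is proved in Section \ref{c1} under the standing hypothesis that $E$ is finite. In Theorem \ref{qua3} regularity of $v$ only forces $s^{-1}(v)$ (hence the index set $I$) to be finite---the digraph itself may still be infinite, for instance with infinite emitters at other vertices, and then these citations fail. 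The paper closes exactly this hole by representation theory rather than localization: for regular $v$ it builds a second explicit module, on the basis $F(E)v\cup\bigcup_{i=1}^{n-1}F(E)a_i$ with the real arrow $a_n$ acting on $v$ by $r_n^{-1}\bigl(v-\sum_{i=1}^{n-1}r_ia_i\bigr)$, which shows $v\notin L_K(E)L_{\frak r}$; this properness, together with the easy fact that $KE^*v/L_{\frak r}$ is spanned by $\bar v$, forces the saturation equality, and $\End(S^K_{\frak r})\cong K$ then falls out of the same prefix-reduction that proves simplicity (every endomorphism is determined by the image of the generator, which the reduction pulls back into $KE^*v/L_{\frak r}\cong K$). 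Note that your module $M$ cannot be recycled for this purpose, since its defining rule at $v$ is incompatible with (CK2) when $v$ is regular. With those three citations replaced by this second representation space (or with those claims restricted to finite $E$), your plan coincides with the paper's argument.
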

   
\begin{proof} First we identify  $T^K_{\frak r}$ with $L_K(E)v/L_K(E)L_{\frak r}$ by right exactness of tensor products. $L_K(E)L_{\frak r}\subseteq L_K(E)f_{\frak r}$ holds clearly. Next, we show that $L_K(E)L_{\frak r}$ and $L_K(E)f_{\frak r}$ are proper different left ideals of $L_K(E)$ by using representation theory when $v$ is an infinite emitter.  Let $M=(KE)v$ be a $K$-vector space generated by the basis $B=F(E)v$. Each vertex $u\in E^0$ or each arrow $a\in E^1$ defines a linear transformation  $\ta_u$ or $\ta_a$ on $M$ by left multiplication with $u$ or $a$, respectively. Each arrow $a$ defines a linear transformation $\ta_{a^*}$ on $M$ as follows
$$\ta_{a^*}(\m)=\begin{cases} 0 &  \text{if $a\neq h_{\m }(1)$ or $s(a)=v; a\notin \{a_1, \dots, a_n\}\,\, \&\,\, \m=v$},\\
t_{\m}(1) & \text{if $a=h_{\m}(1)$},\\ r_iv
 & \text{if $\m=v\,\, \& \,\, a=a_i $}. \end{cases}\,\,$$
It is a routine exercise to verify that the assignment $\ta$ makes $M$ a left $L_K(E)$-module, $\ann_{L_K(E)}v=L_K(E)L_{\frak r}$ and $f_{\frak r}\notin L_K(E)L_{\frak r}$ whence the claim follows.
For a regular vertex $v$ one can see $L_{\frak r}=L_K(E)f_{\frak r}\cap KE^*$ in the same way as in Remark \ref{chen} using the Krieger-Cuntz relation (CK2). In this case one can take the set $B=F(E)v\bigcup\limits_{i=1}^{n-1} F(E)a_i$ and the vector space $M$ generated by the basis $B$ to obtain a representation space of $S^K_{\frak r}$ by requiring
$$\ta_{a_n}(v)=r^{-1}_n(v-\sum\limits_{i=1}^{n-1}r_ia_i),$$
whence $ L_K(E)L_{\frak r}$ is a proper left ideal of $L_K(E)$ as claimed.
 
Finally to verify a simplicity of $S^K_{\frak r}$ one observes that
every nonzero element $m\in S^K_{\frak r}$ can be  represented by $t=tv=\sum k_j\m_j\n^*_j \mod L_K(E)f_{\frak r}$ where $k_j\in K$ and $\m_j, \n_j$ are real paths ending in $v$. The assumption implies that all $\n_j$ are paths in the $a_i$. Furthermore because $a^*_iv-r_iv \in L_K(E)L_{\frak r}$ for each index $i$, one can assume $m\equiv \sum_j k_j\m_j=t \mod L_K(E)L_{\frak r}$ with a possibly minimal number of summands, and subject to this condition the degree $\deg t=\max_j |\m_j|$ is minimal. Therefore any subsums of $t$ are not contained in $L_K(E)L^K_{\frak r}$. By multiplying with some $k\m^* (0\neq k\in K)$ of a minimal length, one can assume without loss of generality that $m\equiv t=-1+\sum_j k_j\m_j$ with a possibly minimal number of terms, and subject to this condition the degree $\deg t=\max_j|\m_j|$ of $r$ is minimal. We show by double induction that there is some $s\in KE^*$ satisfying $st\equiv v \mod L_K(E)f_{\frak r}$.

The case $\deg t=0$ obviously holds. Let $\deg t=1$. Then $\m_j\in s^{-1}(v)$. If there is $\m_j=a_i$ with $r_i=0$, then $k^{-1}_ja^*_it=v$ holds. Therefore one can assume that the $\m_j$ form a subset of $a_i$ with $r_j\neq 0$. 
If there is some $a_i$ with $r_i\neq 0$ different from all $\m_j$, then $-r_i^{-1}a^*_it=-r^{-1}_ia^*_iv \equiv v \mod L_K(E)f_{\frak r}$ holds. Hence one can assume that the $\m_j$ form the set of all $a_i$ with $r_i\neq 0$. Then for each such $a_i$ one has $a^*_it=(-a^*_i+k_i)v\notin L_K(E)f_{\frak r}$ if and only if $k_i\neq r_i$. The case that $k_i=r_i$ for every index $i$ cannot happen because it would imply $t=f_{\frak r}\equiv 0 \mod L_K(E)f_{\frak r}$, a contradiction. Thus the case $N=1$ is verified. 

Assume now the claim for all degrees $\leq N-1 (N\geq 2)$. Let $l=\min_j |\m_j|$. Then $l\geq 1$. If there is a monomial $\n$ containing a factor $a_i$ with $r_i=0$ of length $l$ among the $\m_j$, i.e.,
$\n=\n_1a_i\n_2${\textcolor{red}{,}} then $a^*_i\n^*_1t\notin L_K(E)f_{\frak r}$ of degree $\leq N-1$ with fewer terms. Hence one can assume that the $\m_j$ of length $l$ form a subset of monomials with factors $a_i$ satisfying $r_i\neq 0$. Consequently, for a monomial $\m$ of length $l$ among the $\m_j$ with coefficient $k$ one has $\m^*t=(k-\m^*)v+\cdots\notin L_K(E)f_{\frak r}$ if $k\neq r_{i_1}\cdots r_{i_l}$ if $\m=a_{i_1}\cdots a_{i_l}$. However, the case when the $\m_j$ form the set of all monomials in the $x_i$ and $k_j=r_{i_1}\cdots r_{i_l}$ for $\m_j=x_{i_1}\cdots x_{i_l}$ is impossible because
it would imply for $f=f_{\frak r}+v$ that
$$\sum\limits_{|\m_j|=l}k_j\m_j-v=(f^{l-1}+\cdots +f+1)f_{\frak r}\equiv 0 \mod L_K(E)f_{\frak r},$$
 a contradiction in view of the fact that the subsum of $m$ consists of terms of length $l$  not congruent to zero. This shows that there is $s_1\in  KE^*$ such that $s_1m\equiv s_1t \mod L_K(E)f_{\frak r}$ and $s_1t$ has degree $\leq N-1$ and so the proof is complete by the induction hypothesis. 

 It remains only to verify the claim concerning $L_K(E)r$ when $v$ is an infinite emitter. We have already seen $r\neq 0$. Since $a^*_ir=0$ for every $i\in I$ and so $b^*r=0$ holds for every $b\in s^{-1}(v)$, $L_K(E)r$ is isomorphic to the Cohn-Jacobson module  $S_v$, hence it is simple.  
\end{proof}
I do not know, however, whether $T^K_{\frak r}$ is semisimple if it is not simple.

The next result shows a wide applicability of the method described before Theorem \ref{ranga}. In contrast to Theorems \ref{qua1}, \ref{qua2} and \ref{qua3} the sets of arrows in periods are not necessarily disjoint and we need not take division algebra factors of $K\frak c^*$. It is enough to consider simple modules over $K\frak c^*$. Therefore Proposition \ref{linear} provides an example of Mantese-Rangaswamy irreducible representation which is not a Hilbert one. Moreover, both representation theory of Leavitt path algebras and Schreier techniques developed in \cite{ajm} and \cite{a1} are used heavily in the proof.
\begin{proposition}\label{linear} If  $a, b$ are loops at $v\in E^0,\, f=v-ab-b$ and $L_{ab}=L_K(E)f\cap KE^*$, 
then $T_{ab}=L_K(E)v/L_K(E)L_{ab}\cong L_K(E)\otimes_{KE^*} KE^*v/L_{ab}$
is an irreducible representation of $L_K(E)$ and $\End(T_{a})\cong K$ if $v$ is regular. If $v$ is an infinite emitter and $\bar f= f+L_K(E)L_{ab}\in T_{ab}$, then $0\subseteq L_K(E)a^*{\bar f}\subseteq L_K(E){\bar f}\subseteq T_{ab}$ is a composition chain of length 3, having the first two subfactors isomorphic to Cohn-Jacobson module $S_v$ and the third one is
$S_{ab}$  with $\End(S_{v})\cong K$. 
\end{proposition}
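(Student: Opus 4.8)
The plan is to treat the two cases—$v$ regular and $v$ an infinite emitter—separately, in both first identifying $T_{ab}$ with $L_K(E)v/L_K(E)L_{ab}$ through the right exactness of $L_K(E)\otimes_{KE^{*}}-$ together with the identification $L_K(E)\otimes_{KE^{*}}KE^{*}\cong L_K(E)$, exactly as in the proofs of Theorems \ref{ranga} and \ref{qua3}. Writing $z=\bar v$ for the image of $v$ and using the relation $fz=0$, i.e. $z=abz+bz$, together with (CK1) one computes the basic ghost actions $a^{*}z=bz$, $b^{*}z=z$, $(a^{*})^{2}z=0$ and $c^{*}z=0$ for every $c\in s^{-1}(v)\setminus\{a,b\}$. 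These identities produce the ghost relations $(a^{*})^{2},\,b^{*}-v,\,c^{*}\in L_{ab}$ and form the computational backbone of both cases.

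For regular $v$ I would mimic the argument of Theorem \ref{qua3}: using (CK2) at $v$ one shows $L_{ab}=L_K(E)f\cap KE^{*}$ and then builds an explicit representation space $M$ on a Schreier basis $B\subseteq F(E)$ of the left ideal $KEf$ of $KE$ (the real paths with no terminal segment $ab$), assigning to each vertex and arrow the obvious concatenation maps $\theta_{u},\theta_{a}$ and the erasing maps $\theta_{a^{*}}$, with the single nontrivial value dictated by $abz=(v-b)z$. After checking that $\theta$ respects (CK1)--(CK2) and yields $M\cong T_{ab}$, simplicity follows by a double induction on the width (number of real monomials) and the degree of a representative $z\in M$ of a nonzero element: an initial ghost arrow outside the support strictly lowers the width, so one reduces any nonzero element to a scalar multiple of $\bar v$. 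The endomorphism computation is then routine, since any endomorphism is determined by the image of $\bar v$, the reduction shows this image is a scalar, and hence $\End(T_{ab})\cong K$.

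For the infinite emitter case the key point is that (CK2) is no longer available, so $L_K(E)L_{ab}\subsetneq L_K(E)f$ and $\bar f\neq 0$. I would first establish this properness, and the analogous facts $a^{*}\bar f\neq 0$ and $\bar f\notin L_K(E)a^{*}\bar f$, by constructing the representation space $M=(KE)v$ on the basis $F(E)v$ with the nontrivial value of $\theta_{a^{*}}$ at $v$ dictated by $a^{*}z=bz$, as in Theorem \ref{qua3}, and reading off $\ann_{L_K(E)}v=L_K(E)L_{ab}$. Next I would identify the subfactors using the ghost computations above: since $a^{*}f=a^{*}-b$ and one checks $c^{*}(a^{*}-b)\in L_{ab}$ for every $c\in s^{-1}(v)$ (the cases $c=a,b$ and $c\neq a,b$ each reducing to one of the basic identities), the element $a^{*}\bar f$ is killed by all ghost arrows out of $v$, so $L_K(E)a^{*}\bar f$ is a nonzero quotient of the simple Cohn--Jacobson module $S_{v}$, hence $\cong S_{v}$. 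Likewise $b^{*}\bar f=\overline{b^{*}-v}=0$ and $c^{*}\bar f=0$ for $c\neq a,b$, so in $L_K(E)\bar f/L_K(E)a^{*}\bar f$ the class of $\bar f$ is annihilated by every $c^{*}$, giving the second factor $\cong S_{v}$. Finally $L_K(E)\bar f=L_K(E)f/L_K(E)L_{ab}$ forces the top factor $T_{ab}/L_K(E)\bar f\cong L_K(E)v/L_K(E)f=S_{ab}$, which is simple by the very reduction argument of the regular case, since that argument never needs (CK2) once $fz=0$ is imposed. This yields the composition chain of length $3$ with the stated factors.

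The main obstacle I anticipate is the bookkeeping in the double induction for simplicity—verifying that for every nonzero element one can always find a ghost arrow (or short ghost path) whose action strictly decreases the chosen complexity measure without annihilating the element—and, in the infinite emitter case, the careful verification of the three (non)vanishing statements ($a^{*}\bar f\neq0$, $\bar f\notin L_K(E)a^{*}\bar f$, and the two $S_{v}$-identifications) precisely because (CK2) fails at $v$, so these cannot be read off from a naive quotient computation but must be extracted from the explicit module $M$.
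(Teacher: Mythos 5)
Your regular-vertex half is essentially sound, though it takes a different route from the paper: the paper proves that $L_{ab}$ is a maximal left ideal of $KE^*$ (via the free subalgebra $K\langle a^*,b^*a^*\rangle$ and the two-dimensional quotient $KE^*v/L_{ab}$), computes $\End(KE^*/L_{ab})\cong K$, shows $L_K(E)L_{ab}=L_K(E)f$ from (CK2), and then invokes Corollary \ref{endoring}; you instead build a representation space on a Schreier basis and run a double induction, in the style of Theorem \ref{qua3} and Remark \ref{rep}. Either works. (You also compute with the statement's $f=v-ab-b$, while the paper's proof actually works with $f=v-a-ab$; that inconsistency is the paper's, and it does not affect the point below.)

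The infinite-emitter half has a genuine gap, at exactly the step you flagged. Your space $M=(KE)v$ with basis $F(E)v$ and $\theta_{a^*}(v)=b$ forces $(a^*-b)v=0$. But $a^*-b=a^*f$ is \emph{not} an element of $KE^*$ (it mixes a ghost with a real arrow), so it does not lie in $L_{ab}=L_K(E)f\cap KE^*$; and whether $a^*-b\in L_K(E)L_{ab}$ is precisely the question whether $a^*\bar f=0$ in $T_{ab}$, i.e.\ the negation of the nonvanishing of your bottom composition factor. Hence the equality you propose to ``read off'', $\ann_{L_K(E)}v=L_K(E)L_{ab}$, is false for your $M$: its annihilator contains $L_K(E)L_{ab}+L_K(E)(a^*-b)$, so $M$ is (a quotient of) $T_{ab}/L_K(E)a^*\bar f$, not a model of $T_{ab}$. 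Your $M$ does prove $\bar f\neq 0$ and $\bar f\notin L_K(E)a^*\bar f$, since $fv=v-ab-b\neq 0$ in $M$ while $a^*f$ kills $v$; but it is structurally incapable of proving $a^*\bar f\neq 0$, because that element is zero in $M$ by construction. The analogy with Theorem \ref{qua3} is what misleads you: there the exceptional ghost values $r_i$ are \emph{scalars}, so the relations $a_i^*v-r_iv$ do lie in $KE^*$, hence in $L_{\frak r}$, and the span of $F(E)v$ really is the induced module --- which is also why that chain has length $2$, whereas here the extra composition factor exists exactly because the relation $a^*\bar v=b\bar v$ is invisible to $KE^*$.

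The repair is forced by the fact that $KE^*v/L_{ab}$ is two-dimensional with basis $\{\bar v,\overline{a^*}\}$: a faithful model of $T_{ab}=L_K(E)\otimes_{KE^*}KE^*v/L_{ab}$ must carry a basis vector recording $a^*\bar v$ independently of all real paths. This is the essential feature of the paper's model, whose basis is $F(E)v\cup\{a^*\}\cup F(E)ba^*$. Concretely, enlarge your space to $F(E)v\sqcup\{\alpha\}\sqcup\{\mu\alpha\mid \mu\in F(E)v,\ |\mu|\geq 1\}$ with $\theta_{a^*}(v)=\alpha$, $\theta_{a^*}(\alpha)=0$, $\theta_{b^*}(\alpha)=v$, $\theta_{c^*}(\alpha)=0$, and real arrows acting freely by concatenation; two cautions are in order. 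First, before any model can be used you must identify $L_{ab}$ with the left ideal of $KE^*$ generated by $(a^*)^2,\ b^*-v,\ b^*a^*-v,\ c^*,\ c^*a^*$ (a Schreier-type reduction plus a linear-independence check), since only then does checking the annihilator on generators show $L_K(E)L_{ab}$ kills $v$. Second, do not carry over the paper's listed exception $\theta_a(a^*)=v$: the element $aa^*-v$ also fails to lie in $L_K(E)L_{ab}$, and imposing it collapses $L_K(E)\bar f$ onto $L_K(E)a^*\bar f$, destroying the middle factor. On the corrected space one gets $a^*fv=\alpha-b\neq 0$, which supplies the missing nonvanishing; the rest of your outline (the two $S_v$ identifications via $x^*(a^*-b)\in L_{ab}$ for all $x\in s^{-1}(v)$, the top factor $S_{ab}$, and its simplicity) then goes through.
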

\begin{proof}  It is well-known that the subalgebra $K\langle a^*, b^*\rangle$ of $KE^*$ is a free associative $K$-algebra whence the subalgebra $K\langle a^*, b^*a^*\rangle$ is also free on the set $\{a^*, b^*a^*\}$. From the definition of $L_{ab}$ the set $\{v, a^*\}$ induces a $K$-basis of $KE^*/L_{ab}$. Therefore the relations $a^*-(a^*)^2, b^*a^*-v$ define only a simple module of dimension 2 over $K\langle a^*, b^*a^*\rangle$ and $Kv+Ka^*$ is a $K$-space directly complementing the corresponding left ideal $L_{ab}\cap K\langle a^*, b^*a^*\rangle$. Consequently, $L_{ab}$ is a maximal left ideal of $KE^*$. Next, we compute the endomorphism ring of $KE^*/L_{ab}$. An endomorphism of $KE^*/L_{ab}$ is completely determined by an image of $v+L_{ab}$ which can be represented by some linear combination $k_1v+k_2a^* \mod L_{ab}$. If $k_2\neq 0$, then $k_1v+k_2a^* \mod L_{ab}$ is not annihilated by $b^*$ whence the claim. Moreover, we claim the inclusion $L_K(E)L_{ab}\subseteq L_K(E)(v-a-ab)=L$.  Namely, for every $x\in s^{-1}(v)\setminus \{a\}$ one has $x^*(v-a-ab)=x^*\in L$. Furthermore $(a^*)^2(v-a-ab)=(a^*)^2-a^*\in L$ as well as $b^*a^*(v-a-ab)=b^*a^*-v\in L$. The equality $a^*(v-a-ab)=a^*-v-b$ implies for $x\in s^{-1}(v)\setminus \{a, b\}$ the equality $x^*a^*(v-a-ab)=x^*a^*-x^*$ whence $x^*a^*\in L$ holds. Therefore one obtains the inclusion $L_{ab}\subseteq L$. Furthermore we have the equality 
 $L_K(E)L_{ab}=L_K(E)(v-a-ab)$ if $v$ is regular. Namely, the regularity of  $v$ implies
$$v=\sum_{x\in s^{-1}(v)}xx^*v\equiv aa^*v=aa^* \mod L_K(E)L_{ab}, $$ 
hence
$$v\equiv aa^*v\equiv a\Bigl (\sum_{x\in s^{-1}(v)}xx^*\Bigr ) a^*v\equiv a(aa^*+bb^*)a^*v\equiv a+ab \mod L_K(E)L_{ab},$$
 as we claimed. Consequently, for finite digraphs Proposition \ref{linear} is an obvious consequence of Corollary \ref{endoring}.

Let $v$ be an infinite emitter and $M$ be a $K$-space spanned by the basis
$F(E)v\cup \{a^*=a^*v\}\cup \{F(E)ba^*=F(E)vba^*v\}$. $M$ becomes a left $L_K(E)$-module by assigning to $u\in E^0$ and $x\in E^1$ linear transformations $\ta_u, \ta_x$ and $\ta_{x^*}$ defined as left multiplications by $u, x$, and $x^*$ on basis elements, i.e., by concatenation and erasure  according to the Cuntz-Krieger condition (CK1), respectively, except $\ta_a(a^*)=v;\, \ta_{a^*}a^*=a^*, \, \ta_{b^*}a^*=v$ and $\ta_{x^*}v=0$ for all $x\in s^{-1}(v)\setminus \{a\}$. It is a routine task to check that $\ta$ induces an algebra homomorphism from $L_K(E)$ into $\End(_KM)$ and therefore $\ann_{L_K(E)}v=L_K(E)L_{ab}$ whence the assignment $v\in M\mapsto v+L_K(E)L_{ab}\in T_{ab}$ defines an isomorphism between $_{L_K(E)}V$ and $T_{ab}$, and $v-a-ab\notin L_{ab}$. By convention we shall write $rm$ for $\ta_r(m)$ where $r$ or $m$ runs over elements from $L_K(E)$ or $M$, respectively.  Since $w=v-a-ab\in V$ is clearly annihilated by all $a\neq x (\ta_x)\in s^{-1}(v)$ and $a^*w=a^*-v-b$ is annihilated by all $x\in s^{-1}(v)$ the submodule $L_K(E)w=N$ of $_{L_K(E)}M$ has length 2 with composition factors isomorphic to the Cohn-Jacobson module $S_v$ defined by the infinite emitter $v$.

Therefore verifying Proposition \ref{linear} reduces to the simplicity of $S_{ab}=L_K(E)/L_K(E)(v-a-ab)$ and to compute its endomorphism ring. For the first aim, because $\{b^*a^*- v, (a^*)^2- a^*, x^*a^*\,|\, x\in s^{-1}(v)\setminus \{a, b\}\}\subseteq L$, every nonzero element of $S_{ab}$ can be represented
 by $z=\sum_i k_i\m_i +\sum_j k_j\n_ja^* \, (0\neq k_i, k_j \in K; \m_i \in F(E)v, \n_j\in \{v, F(E)b\}$ having the least number $N$ of nonzero coefficients, and that subject to this condition, the degree $\deg z$ of $z$, i.e., $\max_{i,j} \{|\m_i|, |\n_j|\}$ is minimal.  We shall use (double)
induction. The cases $N=1,\ 2$ are obvious. Assume $N=3$. Multiplying $z$ with the path $\g^*$ of minimal length, one can assume without loss of generality that one term of $z$ is either $v$ or $a^*$. Let $\m, \n\in vF(E)v$ be paths of $z$. Then one of them is a head of the other, say, $\m$ is a head of $\n$. Then $h_{\m}(1)\in s^{-1}(v)$. If $h_{\m}(1)\neq a$, then $h^*_{\m}(1)z$ is a sum of at most 2 terms and so the claim is verified. Thus $h_{\m}(1)=a$. Put $z_1=a^*z$. Then $z_1=a^*+\cdots$. Hence $b^*z_1=z_2=v+\cdots$ is a sum of 3 terms of a smaller degree. Therefore after finitely many steps one obtains that $z$ reduces to a sum $z=k_1v+k_2a+k_3ab\, (0\neq k_i\in K)$. Then $a^*z=k_1a^*+k_2a+k_3b$. Hence $(a^*)^2z=(k_1+k_2)a^*=0$ iff $k_2=-k_1$. In this case   
$b^*a^*z=(k_1+k_3)v=0$ iff $k_3=-k_1$. However, the case $k_2=k_2=-k_1$ cannot happen because $v-a-ab\in L$! Therefore the case $N=3$ is verified.  Assume now the claim holds for a number of non-zero coefficients of $z$ at most $N-1$ and all degrees and consider the case when the number of non-zero coefficients of $z$ is $N(\geq 4)$. The minimality of $N$ implies that no proper subsums of $z$ belong to $L$.  Multiplying $z$ with the path $\g^*$ of minimal length, one can assume again without loss of generality that one term of $z$ is either $v$ or $a^*$ and a path in $z$ of smaller length must be a head of longer ones, otherwise $\a^*z\notin L $ would have fewer terms whence the claim. However, for a proper path $\a$ of $z$ of the smallest length, $\a^*z$ must involve a subsum $k_1v+k_2a^*$ otherwise $\a^*z$ could have fewer non-zero coefficients, whence the claim by induction. Consequently, $b^*\a^*z\notin L$ have fewer non-zero coefficients and so our induction is complete and $S_{ab}$ is simple.
To compute the endomorphism ring of $S_{ab}$ one observes that an endomorphism of $S_{ab}$ is completely determined by an image $z$ of $v+L$. By multiplying with appropriate $r\in KE^*$ one gets $zr\in L_{ab}$, that is, every endomorphism of $S_{ab}$ is induced by an endomorphism of $KE^*/L_{ab}$ . This shows $\End(S_{ab})\cong K$, as stated.
\end{proof}
The similar argument and calculation to Proposition \ref{linear} shows that for a vertex $v\in E^0$ having two loops $a, b$ and $f=v-a^2-a-ab$ the left ideal $L_K(E)f$ contains the left ideal $L_{ab}$
of $KE^*$ generated by $\{f^*_1=(a^*)^2-a^*-v, f^*_2=b^*a^*-v; u\in E^0\setminus v\}$ and all ghost paths from a semigroup generated by $a^*$ and $b^*a^*$. Moreover, if $v$ is a regular vertex, then $L_K(E)f=L_K(E)L_{ab}$ holds.Therefore if $K=\Q$ the field of rationals, then $f^*_1$ is irreducible over $\Q$ and $\Q \langle a^*, b^*a^*\rangle$ maps to $\Q(\sqrt{5})$ by sending $b^*a^*\mapsto 0$ and $a^*\mapsto \frac {1+\sqrt{5}}{2}$. Completely similar to Proposition \ref{linear} we have
\begin{corollary}\label{nonlinear} If a vertex $v\in E^0$ has two loops $a, b$ and $f=v-a^2-a-ab$, then $L_K(E)f$ contains the left ideal $L_{ab}$
of $KE^*$ generated by $\{f^*_1=(a^*)^2-a^*-v, f^*_2=b^*a^*-v; u\in E^0\setminus v\}$ and all ghost paths from a semigroup generated by $a^*$ and $b^*a^*$. Moreover, $L_K(E)f=L_K(E)L_{ab}$ holds if $v$ is a regular vertex. In particular, $T_{ab}=L_{\Q}(E)/L_{\Q}(E)f$ is an irreducible representation of $L_{\Q}(E)$ whose endomorphism ring isomorphic to $\Q (\sqrt{5})$ if $v$ is a regular vertex and $\Q$ is the field of rationals. If $v$ is an infinite emitter and $\bar f$ is the image of $f$ in $T_{a}$, then
$0\subseteq L_{\Q}(E)a^*{\bar f}\subseteq L_{\Q}(E){\bar f}\subseteq T_{ab}$; two of simple subfactors are isomorphic to the Cohn-Jacobson module and the third one is  $T_{ab}/L_{\Q}(E)f$ having the endomorphism ring isomorphic to $\Q(\sqrt{5})$.
\end{corollary}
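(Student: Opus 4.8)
The plan is to follow the pattern of Proposition~\ref{linear} almost verbatim; the only genuinely new ingredient is the nonlinear relation $f^*_1=(a^*)^2-a^*-v$, whose associated quadratic $x^2-x-1$ is irreducible over $\Q$ and produces the field $\Q(\sqrt5)$ instead of the ground field. First I would record, exactly as there, that $K\langle a^*,b^*\rangle\subseteq KE^*$ is free, hence $K\langle a^*,b^*a^*\rangle$ is free on $\{a^*,b^*a^*\}$. Next I would check by the Cuntz--Krieger relation (CK1) that the listed generators of $L_{ab}$ lie in $L_K(E)f$: one computes $a^*f=a^*-a-v-b$, hence $(a^*)^2f=(a^*)^2-a^*-v=f^*_1$; moreover $b^*f=b^*$ and $b^*a^*f=b^*a^*-b^*-v$, so that $f^*_2=b^*a^*-v=(b^*a^*+b^*)f\in L_K(E)f$; finally $x^*f=x^*$ and $x^*a^*f=x^*a^*-x^*$ show that the other ghost arrows and the ghost paths outside the semigroup generated by $a^*,b^*a^*$ lie in $L_K(E)f$ as well. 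Since $f=vfv$ sits in the corner determined by $v$, the vertex generators $u\in E^0\setminus\{v\}$ only serve to concentrate the module at $v$ (as in the definition $T_{ab}=L_K(E)v/L_K(E)L_{ab}$ of Proposition~\ref{linear}), and one obtains $L_K(E)L_{ab}\subseteq L_K(E)f$, which is part~(1).

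Next I would identify the quotient. Inside the free algebra $K\langle a^*,b^*a^*\rangle$ the relations $f^*_1,f^*_2$ impose $b^*a^*\equiv v$ and $(a^*)^2\equiv a^*+v$, so $\{v,a^*\}$ spans $KE^*/L_{ab}$ and the action of $a^*$ is the companion map of $x^2-x-1$. Over $K=\Q$ this polynomial is irreducible (its roots $\tfrac{1\pm\sqrt5}{2}$ are irrational), so $KE^*/L_{ab}\cong\Q(\sqrt5)$; in particular it is a $2$-dimensional \emph{simple} left $KE^*$-module, $L_{ab}$ is a maximal left ideal, and $\End({}_{KE^*}KE^*/L_{ab})\cong\Q(\sqrt5)$, the commutant of the companion map. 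Because the recurrence $(a^*)^2\equiv a^*+v$ forces every power $(a^*)^l$ to be a nonzero combination of $v,a^*$ modulo $L_{ab}$, the ideal $L_{ab}$ contains no power $I^l$ of $I$, so it is not open in $\frak T$.

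For part~(2), when $v$ is regular I would prove $f\in L_K(E)L_{ab}$ by the same iterated use of (CK2) as in Proposition~\ref{linear}. Since $x^*\in L_{ab}$ for every $x\in s^{-1}(v)\setminus\{a\}$, the relation $v=\sum_x xx^*$ gives $v\equiv aa^*\pmod{L_K(E)L_{ab}}$; expanding $aa^*=a\bigl(\sum_x xx^*\bigr)a^*$ and using $x^*a^*\in L_{ab}$ for $x\neq a,b$ together with $b^*a^*\equiv v$ and $(a^*)^2\equiv a^*+v$ reduces this to $aa^*\equiv a^2a^*+a^2+ab$; finally left-multiplying $v\equiv aa^*$ by $a$ yields $a\equiv a^2a^*$, whence $v\equiv a^2+a+ab$, i.e. $f\equiv0$. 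Combined with part~(1) this gives $L_K(E)f=L_K(E)L_{ab}$, and then Corollary~\ref{endoring} (for finite $E$, or the regular-vertex version of the explicit argument below) shows $T_{ab}=L_\Q(E)/L_\Q(E)f\cong L_\Q(E)\otimes_{KE^*}KE^*/L_{ab}$ is simple with endomorphism ring $\Q(\sqrt5)$.

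The main work, and the step I expect to be the real obstacle, is the infinite-emitter case, where $T_{ab}=L_\Q(E)/L_\Q(E)L_{ab}$ is no longer simple. Here I would imitate the explicit construction of Proposition~\ref{linear}: take the $K$-space $M$ on a basis built from the reduced words in $\{v,a^*\}$ (of the shape $F(E)v\cup\{a^*\}\cup F(E)b^*a^*$), define linear transformations $\t_u,\t_x,\t_{x^*}$ by concatenation and erasure subject to the special values forced by $(a^*)^2\equiv a^*+v$ and $b^*a^*\equiv v$, and verify that $\t$ respects (CK1) and (CK2); this realizes $M\cong T_{ab}$, gives $\ann_{L_\Q(E)}v=L_\Q(E)L_{ab}$, and shows $\bar f=f+L_\Q(E)L_{ab}\neq0$. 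The delicate part is the composition series: since $a^*\bar f$ and the image of $\bar f$ are each annihilated by all $x^*\in s^{-1}(v)$ after the appropriate reductions, the submodules $L_\Q(E)a^*\bar f\subseteq L_\Q(E)\bar f$ have successive quotients isomorphic to the Cohn--Jacobson module $S_v$ (matching multiplicity $l=2=\deg(x^2-x-1)$ at the single infinite emitter), while the top quotient $T_{ab}/L_\Q(E)\bar f\cong L_\Q(E)/L_\Q(E)f$ is the simple module of part~(3) with endomorphism ring $\Q(\sqrt5)$. Establishing that $0\subseteq L_\Q(E)a^*\bar f\subseteq L_\Q(E)\bar f\subseteq T_{ab}$ has length exactly $3$ — that no factor collapses and that the two lower factors are genuinely $S_v$ — is precisely where the careful bookkeeping with the explicit $\t$-action is indispensable, the count agreeing with the general formula $ml+1$ of Theorem~\ref{ranga}.
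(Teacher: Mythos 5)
Your computations in the first part do track the route the paper intends (the paper itself offers little more than ``completely similar to Proposition \ref{linear}'' plus the identities you found): the formulas $(a^*)^2f=f^*_1$, $(b^*a^*+b^*)f=f^*_2$, $x^*f=x^*$, $x^*a^*f=x^*a^*-x^*$, the (CK2) reduction $v\equiv a+a^2+ab$ at a regular vertex, and your observation that no power of $I$ can lie in $L_{ab}$ are all correct. The genuine gap is the endomorphism-ring step, and it is not a repairable omission: the assertion that $\End({}_{KE^*}KE^*/L_{ab})$ is ``the commutant of the companion map,'' hence $\Q(\sqrt{5})$, is false. An endomorphism must commute with the action of \emph{every} element of $KE^*$, not only of $a^*$. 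Now $b^*$ is a ghost path outside the semigroup generated by $a^*$ and $b^*a^*$, so $b^*\in L_{ab}$, while $f^*_2=b^*a^*-v\in L_{ab}$; consequently, on the basis $\{\bar v,\overline{a^*}\}$ of $KE^*/L_{ab}$ the operator of $b^*$ sends $\bar v\mapsto 0$ and $\overline{a^*}\mapsto \bar v$. Combining this with the companion matrix of $x^2-x-1$ (the $a^*$-action), one checks that the operators induced by $b^*$, $a^*b^*$, $b^*a^*-b^*$ and $a^*-b^*-a^*b^*$ are the four matrix units, so the image of $KE^*$ in $\End_{\Q}(KE^*/L_{ab})$ is all of $M_2(\Q)$, and by the double centralizer theorem $\End({}_{KE^*}KE^*/L_{ab})=\Q$. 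The commutant of the companion matrix alone is indeed $\Q(\sqrt{5})$, but none of its non-scalar elements commutes with the $b^*$-action. Corollary \ref{endoring} (or the argument closing the proof of Proposition \ref{linear}, that every endomorphism of the induced module comes from one of $KE^*/L_{ab}$) then forces $\End(T_{ab})\cong \Q$ --- exactly parallel to the endomorphism ring $K$ obtained in Proposition \ref{linear}.

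In fact this exposes an inconsistency in the statement itself, which your blind attempt inherited and then ``justified'' by the invalid commutant step. The paper's own hint --- that $\Q\langle a^*,b^*a^*\rangle$ maps onto $\Q(\sqrt{5})$ by $a^*\mapsto \frac{1+\sqrt{5}}{2}$ and $b^*a^*\mapsto 0$ --- is incompatible with the displayed generator $f^*_2=b^*a^*-v$, which that map sends to $-1\neq 0$. To get $\Q(\sqrt{5})$ as endomorphism ring one would need $b^*a^*$ itself (not $b^*a^*-v$) in the annihilator, so that $b^*$ kills the module and $KE^*$ acts through the commutative algebra $\Q[a^*]/(f^*_1)$; but that module is just the Rangaswamy module of $q=x^2-x-1$ and the loop $a$, and the corresponding relation is incompatible with the first claim of the corollary: if $b^*a^*$ lay in $L_K(E)f$, then from $b^*a^*f=b^*a^*-b^*-v$ and $b^*=b^*f$ one would get $v\in L_K(E)f$, killing the module. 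So with $f=v-a^2-a-ab$ the two assertions ``$L_{ab}\subseteq L_K(E)f$'' and ``$\End\cong\Q(\sqrt{5})$'' cannot hold simultaneously; for the module you (and the paper) actually construct, the endomorphism ring is $\Q$. The same objection propagates to your infinite-emitter discussion, whose top composition factor again has endomorphism ring $\Q$ rather than $\Q(\sqrt{5})$.
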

We mention the above consequence because $f^*_1$ is not a linear irreducible polynomial!
The examples discussed above suggest a practical, useful way to construct particular modules possibly of finite length over $L_K(E)$. Namely, one can take an arbitrary element $f\in KE$. One can assume without loss of generality that all paths in $f$ start from a vertex $v$ and $f$ contains non-zero constant term $kv$. Then one can study
first
$KE^*v/\ann_{KE^*v}f$ and then define a representation space for $L_K(E)/L_K(E)f$ and compute its endomorphism ring.
\vskip 0.3cm
{\bf Acknowledgements}
\vskip 0.25cm

  
The author is very grateful to M. Siddoway for his generous assistance in the writing of this article. His editing support makes this work more enjoyable for the audience. 
  
\bibliographystyle{amsplain}

\end{document}